\numberwithin{equation}{section}
\newtheorem{theorem}{Theorem}[section]
\newtheorem{proposition}[theorem]{Proposition}
\newtheorem{lemma}[theorem]{Lemma}
\theoremstyle{definition}
\theoremstyle{remark}
\newtheorem{remark}[theorem]{Remark}
\renewcommand{\hom}{\operatorname{Hom}}
\renewcommand{\ker}{\operatorname{Ker}}
\newcommand{\Z}{\mathbb{Z}}
\newcommand{\C}{\mathbb{C}}
\newcommand{\proj}{{\mathbb P}}
\newcommand{\PGL}{{\rm PGL}}
\newcommand{\SL}{{\rm SL}}
\newcommand{\GL}{{\rm GL}}
\newcommand{\SLSL}{{\rm SL}_2\times{\rm SL}_2}
\newcommand{\Oline}{\mathcal{O}_{{\mathbb P}^{1}}}
\newcommand{\Oplane}{\mathcal{O}_{{\mathbb P}^{2}}}
\newcommand{\Ospace}{\mathcal{O}_{{\mathbb P}^{3}}}
\newcommand{\OPP}{\mathcal{O}_{{\mathbb P}^{1}\times{\mathbb P}^{2}}}
\newcommand{\Orel}{\mathcal{O}_{\pi}}
\newcommand{\OSigma}{\mathcal{O}_{\Sigma}}
\newcommand{\OQ}{\mathcal{O}_{Q}}
\newcommand{\OQQ}{\mathcal{O}_{Q_0}}
\newcommand{\sheaf}{\mathcal{O}}
\DeclareMathOperator{\aut}{Aut}
\begin{document}

\title[]{Rationality of some tetragonal loci}
\author[]{Shouhei Ma}
\address{Graduate~School~of~Mathematics, Nagoya~University, Nagoya 464-8601, Japan}
\email{ma@math.nagoya-u.ac.jp}
\thanks{Supported by Grant-in-Aid for Scientific Research No.12809324 and No.22224001.} 
\subjclass[2000]{Primary 14H45, Secondary 14H10, 14E08}
\keywords{tetragonal curve, moduli space, rationality} 
\maketitle 

\begin{abstract}
We prove that the moduli space of tetragonal curves of genus $g\geq7$ is rational 
when $g \equiv 1, 2, 5, 6, 9, 10$ modulo $12$ and $g\ne9, 45$. 
\end{abstract}

\maketitle


\section{Introduction}\label{sec:intro}

Let $\mathcal{M}_g$ be the moduli space of curves of genus $g\geq7$, 
and $\mathcal{T}_g\subset\mathcal{M}_g$ be the locus of tetragonal curves, 
namely non-hyperelliptic curves which have a map of degree $4$ to ${\proj}^1$. 
Classically $\mathcal{T}_g$ has been known to be unirational (\cite{Pe}, \cite{A-C}, \cite{Sc}), 
but the question whether it is rational had remained open until recently, 
when B\"ohning, Bothmer and Casnati \cite{B-B-C} proved that $\mathcal{T}_7$ is rational. 
In this article we make a further step in this direction, 
showing that $\mathcal{T}_g$ is rational for about half genera. 
 
\begin{theorem}\label{main}
Let $g\geq7$ be a natural number with 
\begin{equation*}\label{eqn: mod 12 condition}
g \equiv 1, 2, 5, 6, 9, 10 \mod 12
\end{equation*}
and $g\ne9, 45$. 
Then the tetragonal locus $\mathcal{T}_g$ is rational. 
\end{theorem}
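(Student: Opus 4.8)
The plan is to realize $\mathcal{T}_g$, up to birational equivalence, as the quotient of an affine space by a connected linear algebraic group, and then to prove rationality of that quotient by means of the no-name lemma together with the known rationality of quotients by $\SL_2$ and by products of general linear groups. First I would reduce from curves to covers. For $g\geq7$ the general tetragonal curve carries a unique $g^1_4$ (the expected dimension of its family of $g^1_4$'s is $6-g<0$, so the pencil is rigid and, the Hurwitz space being irreducible, generically unique), so forgetting everything but the pencil gives a birational map from $\mathcal{T}_g$ onto the moduli space $\mathcal{H}$ of tetragonal covers $\pi\colon C\to\proj^1$ taken up to $\aut(\proj^1)=\PGL_2$. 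The excluded genera $g=9,45$ are where this step or the reduction below degenerates and must be treated apart.

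Next I would invoke the Casnati--Ekedahl structure theorem for degree-$4$ Gorenstein covers: $C$ embeds in the $\proj^2$-bundle $\proj(E)$ over $\proj^1$, where $E$ is the rank-$3$ Tschirnhausen bundle with $\deg E=g+3$, and $C$ is the complete intersection of two relative conics; these are encoded by a rank-$2$ bundle $F$ with $\det F\cong\det E$ together with a morphism $\eta\colon F\to\operatorname{Sym}^2E$. Thus, after fixing $E$ and $F$ of the generic (balanced-as-possible) splitting type determined by $g$, the cover $\pi$ is recovered from $\eta$, and $\mathcal{H}$ is birational to $W/G$, where $W=H^0(\proj^1,\hom(F,\operatorname{Sym}^2E))$ is a vector space and $G=(\aut(E)\times\aut(F))\rtimes\SL_2$ acts, $\SL_2$ through the natural linearization of $\Oline(1)$. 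A Riemann--Roch count gives $\dim W=2g+18$, while $G$ has dimension $16$ with a one-dimensional subgroup (simultaneous scaling of $E$ and $F$) acting trivially; hence $\dim(W/G)=2g+18-15=2g+3=\dim\mathcal{T}_g$, which is exactly what a generically free action with $W/G\sim\mathcal{T}_g$ predicts.

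The role of the congruence is to guarantee that $E$ and $F$ have sufficiently balanced splitting types, so that $\aut(E)$ and $\aut(F)$ are as close to the reductive groups $\GL_3$ and $\GL_2$ as possible and $W$ admits a $G$-equivariant filtration suited to the reduction. I would then strip $W$ down by repeated use of the no-name lemma: writing $W$, over a suitable $G$-invariant open set, as a $G$-equivariant vector bundle over a smaller $G$-variety $X$, one obtains $W/G\sim(X/G)\times\mathbb{A}^{r}$, so that it suffices to prove $X/G$ rational. The aim is to peel off the $\aut(E)$- and $\aut(F)$-directions in this way until only the $\SL_2$-action on a space of binary forms (or a small residual linear quotient) survives, whose rationality is then supplied by Katsylo's results on moduli of binary forms and by the rationality of the remaining reductive quotient.

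I expect the main obstacle to be the rationality of this intermediate quotient by the product group, that is, carrying the no-name reductions through explicitly. There are two delicate ingredients. One is verifying that $G$ acts generically freely, equivalently that a general tetragonal cover carries no hidden automorphism in the Casnati--Ekedahl presentation; this is a stabilizer computation. The other is exhibiting the required $G$-equivariant vector-bundle structure on $W$ over a rational base, which is precisely where the favorable splitting types forced by the congruence are used, and where the unipotent contributions arising in the non-fully-balanced cases must be absorbed. I expect the sporadic exclusions $g=9,45$ to be low-dimensional coincidences at which either the generic-freeness or the rationality criterion for the residual $\SL_2$-quotient fails, so that the otherwise uniform reduction breaks down.
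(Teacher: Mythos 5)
Your framework coincides with the paper's: the reduction from curves to covers via the unique $g^1_4$, the Schreyer/Casnati--Ekedahl description of a tetragonal curve as a complete intersection of two relative conics in a $\proj^2$-bundle, the presentation of the moduli space as a quotient of a linear parameter space by (a cover of) the automorphism group, and the dimension count $2g+18-15=2g+3$ all match the paper's Proposition on the stratification of $\hat{\mathcal{T}}_g$. The genuine gap is that everything after this setup --- ``strip $W$ down by repeated use of the no-name lemma until only an $\SL_2$-quotient of binary forms survives'' --- is precisely where the entire difficulty of the theorem lies, and it cannot be carried out in the uniform way you describe. The no-name lemma requires the bundles being peeled off to be linearized for the group that \emph{actually acts}, namely $\aut(X_{e,f})$ or $\PGL_2\times\PGL_3$, whereas the natural linearizations exist only on covers such as $\SL_2\times\SL_3$ or $\SL_2\ltimes\hom(V_1,V_1)\rtimes\GL_2$, whose kernels act by nontrivial scalars; these scalars can be killed by twisting with tautological or determinant bundles only under parity conditions on $b$. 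That, and not ``balancedness of the splitting types,'' is the source of the mod $12$ condition: the splitting types of the scroll and of the pair of conic bundles are dictated by $g$ modulo $6$ alone and are automatically as balanced as possible for general curves in \emph{every} congruence class. Moreover the residual quotients are not always binary-form quotients settled by Katsylo: the paper must invoke, case by case, the almost-transitive $\PGL_2\times\PGL_3$-action on $\proj(V_1\boxtimes W_2)$ (where $W_2=H^0(\Oplane(2))$) with stabilizer $\frak{S}_4$ and the rationality of projective $\frak{S}_4$-quotients for $g\equiv9$; the moduli space of cubic surfaces with a marked line, a flag-variety slice, and Miyata's theorem for $g\equiv10$; a slice reduction to a connected solvable group plus Miyata for $g\equiv2$; and the duality $\aut(X_{1,1})\simeq\aut(X_{0,1})$ for $g\equiv5$. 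None of these ingredients, nor substitutes for them, appear in your plan.

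Your diagnosis of the exclusions $g=9,45$ is also incorrect, and this matters because it is exactly the obstruction your plan assumes away. These are the cases $b=2$ and $b=8$ of $g=6b-3\equiv9\pmod{12}$; they are not low-dimensional coincidences ($g=45$ is hardly low-dimensional), and neither generic freeness nor the rationality of a residual $\SL_2$-quotient fails there. What fails is the linearization: the element $\zeta=(-1,e^{2\pi i/3})$ generating the kernel of $\SL_2\times\SL_3\to\PGL_2\times\PGL_3$ acts nontrivially on $\mathcal{E}\otimes(\det\mathcal{E})^d$ for \emph{every} $d\in\Z$, where $\mathcal{E}$ is the universal subbundle on the Grassmannian $G(b+1,V_1\boxtimes W_2)$, so no twist descends to a $\PGL_2\times\PGL_3$-linearized bundle and the Grassmannian no-name method is blocked. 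A complete proof must confront this descent obstruction head-on; your sketch does not acknowledge that it exists.
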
 

This extends the series of rationality results for 
the hyperelliptic loci (\cite{Ka1}, \cite{B-K}) and the trigonal loci (\cite{SB}, \cite{Ma1}, \cite{Ma3}). 
There naturally arises the question at which gonality such a progress should stop. 
One might approach pentagonal loci as well using the description in \cite{Sc}, 
while it seems that only little is known for gonality $\geq6$ (see \cite{Ge}). 
 
One of the basic approaches for proving rationality of a moduli space is 
to first describe it birationally as the quotient of a parameter space $U$ by an algebraic group $G$, 
and then analyze the $G$-action on $U$. 
The first step means to give a construction of general members that is canonical. 
In the present case, we use Schreyer's model (\cite{Sc}) which describes 
a tetragonal curve $C$ as a complete intersection of two relative conics in a ${\proj}^2$-bundle over ${\proj}^1$. 
When $C$ is general, the ambient ${\proj}^2$-bundle $X$ is either 
(i) ${\proj}^1\times{\proj}^2$ or 
(ii) the blow-up of ${\proj}^3$ along a line or 
(iii) a small resolution of a quadric cone in ${\proj}^4$, 
depending on $[g]\in{\Z}/3{\Z}$. 
Thus, in the present case, $U$ is a parameter space of some complete intersection curves in that $X$, and $G$ is the automorphism group of $X$. 
The structure of $U$ varies according to the parity of $g$, 
so the nature of the group action we will study primarily depends on $[g]\in{\Z}/6{\Z}$. 
Moreover, when attacking the rationality problem, 
we were faced with a technical obstruction which caused the further mod $12$ condition in Theorem \ref{main}. 

We work over the complex numbers. 
\S \ref{sec:scroll} contains preliminaries on the relevant ${\proj}^2$-bundles. 
In \S \ref{sec:tetra} we derive a birational description of $\mathcal{T}_g$ as a quotient space. 
\S \ref{sec:technique for rationality} is a collection of miscellaneous techniques for rationality of quotient spaces. 
They will be also useful for other rationality problems. 
Theorem \ref{main} is proved in \S \ref{sec:P1P2} -- \S \ref{sec:quadric cone}: 
this division comes from the above classification (i) -- (iii).

\vspace{0.3cm}

\textbf{Notation.} 
We will use the following notation for irreducible representations of ${\SL}_2$ and ${\SLSL}$: 
\begin{equation*}
V_d = H^0({\Oline}(d)), 
\end{equation*}
\begin{equation*}
V_{d, e} = V_d\boxtimes V_e = H^0({\sheaf}_{{\proj}^1\times{\proj}^1}(d, e)). 
\end{equation*}
The space $V_d$ is also regarded as a ${\GL}_2$-representation in the natural way.


\section{3-dimensional scrolls}\label{sec:scroll}

For two natural numbers $0\leq e\leq f$, 
let $\mathcal{E}_{e,f}$ be the vector bundle 
\begin{equation*}\label{eqn:def rk 3 bundle}
{\Oline} \oplus {\Oline}(-e) \oplus {\Oline}(-f)
\end{equation*}
over ${\proj}^1$, and  
\begin{equation*}\label{eqn:def P^2-bundle}
X_{e,f} = {\proj}\mathcal{E}_{e,f}
\end{equation*}
be the associated ${\proj}^2$-bundle parametrizing lines in the fibers of $\mathcal{E}_{e,f}$. 
We denote by $\pi: X_{e,f}\to{\proj}^1$ the natural projection. 
In the convention of Grothendieck, $X_{e,f}$ is rather the projectivization of 
the dual $\mathcal{E}_{e,f}^{\vee}$. 
Thus $\pi_{\ast}{\Orel}(1)\simeq \mathcal{E}_{e,f}^{\vee}$ 
for the relative hyperplane bundle ${\Orel}(1)$. 
These ${\proj}^2$-bundles play a fundamental role in the study of tetragonal curves. 
In \S \ref{ssec:scroll basic} we recall their basic properties following Schreyer \cite{Sc}. 
When studying birational types of tetragonal loci, we actually use only three ${\proj}^2$-bundles: 
$X_{0,0}={\proj}^1\times{\proj}^2$, $X_{0,1}$ and $X_{1,1}$. 
In \S \ref{ssec:X01} and \S \ref{ssec:X11}, 
we take a closer look at $X_{0,1}$ and $X_{1,1}$. 

\subsection{Basic properties}\label{ssec:scroll basic}

The Picard group of $X_{e,f}$ is freely generated by ${\Orel}(1)$ and $\pi^{\ast}{\Oline}(1)$. 
Accordingly, we will write 
\begin{equation*}
L_{a,b} = {\Orel}(a) \otimes \pi^{\ast}{\Oline}(b). 
\end{equation*}
For example, the canonical bundle of $X_{e,f}$ is isomorphic to $L_{-3,-2+e+f}$: 
this can be seen from the relative Euler sequence 
\begin{equation*}
0 \to {\sheaf}_{X_{e,f}} \to \pi^{\ast}\mathcal{E}_{e,f}\otimes{\Orel}(1) \to T_{\pi} \to 0
\end{equation*}
where $T_{\pi}$ is the relative tangent bundle. 
The intersection numbers between line bundles are calculated from 
\begin{equation}\label{eqn:intersect number}
(L_{1,0}.L_{1,0}.L_{1,0}) = e+f, \quad 
(L_{1,0}.L_{1,0}.L_{0,1}) = 1, \quad 
L_{0,1}.L_{0,1} \equiv 0. 
\end{equation}
%
When $a\geq0$, $b\geq-1$, we have using 
$\pi_{\ast}L_{a,b}\simeq {\rm Sym}^a\mathcal{E}_{e,f}^{\vee}\otimes{\Oline}(b)$ 
that 
\begin{equation*}\label{eqn: h^0(Lab)}
h^0(L_{a,b}) = (e+f)\binom{a+2}{3} + (b+1)\binom{a+2}{2} 
\end{equation*} 
(\cite{Sc}) and $h^i(L_{a,b})=0$ for $i>0$. 

If $b\geq0$ with $(e, f, b)\ne(0, 0, 0)$, the bundle $L_{1,b}$ is base-point-free and the morphism 
\begin{equation*}\label{eqn:scroll embedding}
\phi=\phi_{L_{1,b}} : X_{e,f} \to |L_{1,b}|^{\vee}\simeq{\proj}^N, \quad N=e+f+3b+2, 
\end{equation*}
is birational onto its image. 
It is an embedding if $b>0$. 
The $\pi$-fibers are mapped by $\phi$ isomorphically to planes in $|L_{1,b}|^{\vee}$, which sweep out $\phi(X_{e,f})$. 
The projective variety $\phi(X_{e,f})$ is usually called a \textit{$3$-dimensional rational normal scroll}. 
Its scroll type (\cite{Sc}) is $(b+f, b+e, b)$. 

We will be concerned with the automorphism group of $X_{e,f}$. 
By the relation \eqref{eqn:intersect number}, 
any automorphism acts on ${\rm Pic}(X_{e,f})$ trivially and in particular preserves $\pi$.  
Hence we have the basic exact sequence 
\begin{equation}\label{eqn:basic sequence}
1 \to {\aut}(\mathcal{E}_{e,f})/{\C}^{\times} \to {\aut}(X_{e,f}) \to {\PGL}_2 \to 1
\end{equation}
where ${\aut}(\mathcal{E}_{e,f})$ is the group of bundle automorphisms 
which are the identity over the base. 
In this article we refrain from working with ${\aut}(X_{e,f})$ for general $(e, f)$ 
and restrict ourselves to $X_{0,0}$, $X_{0,1}$ and $X_{1,1}$, giving an ad hoc treatment. 
Note that ${\aut}(X_{0,0})$ is just ${\PGL}_2\times{\PGL}_3$. 
The other two cases are studied in \S \ref{ssec:X01} and \S \ref{ssec:X11}. 
Here we just mention the following general duality.

\begin{lemma}\label{duality}
We have an isomorphism ${\aut}(X_{e,f})\simeq{\aut}(X_{f-e,f})$ 
of algebraic groups. 
\end{lemma}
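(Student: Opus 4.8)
The plan is to reduce the statement to the evident duality between a projective bundle and its dual. The starting observation is the elementary bundle isomorphism
\[
\mathcal{E}_{f-e,f}\;\simeq\;\mathcal{E}_{e,f}^{\vee}\otimes\Oline(-f).
\]
Indeed, $\mathcal{E}_{e,f}^{\vee}=\Oline\oplus\Oline(e)\oplus\Oline(f)$, so twisting by $\Oline(-f)$ and reordering the summands gives $\Oline\oplus\Oline(-(f-e))\oplus\Oline(-f)$, which is exactly $\mathcal{E}_{f-e,f}$ (note that $0\le f-e\le f$, so the index pair is admissible). Since projectivization is insensitive to twisting by a line bundle, this yields
\[
X_{f-e,f}=\proj\mathcal{E}_{f-e,f}\;\simeq\;\proj\mathcal{E}_{e,f}^{\vee},
\]
and it therefore suffices to produce an isomorphism $\aut(X_{e,f})\simeq\aut(\proj\mathcal{E}_{e,f}^{\vee})$ between a scroll and its dual.

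Next I would build this isomorphism by hand. Writing $\mathcal{E}=\mathcal{E}_{e,f}$, the fibers of $p\colon\proj\mathcal{E}^{\vee}\to\proj^1$ are the dual planes of the fibers of $\pi$; concretely a point of $\proj\mathcal{E}^{\vee}$ over $t$ is a hyperplane in the fiber $\mathcal{E}_t$. By the discussion preceding \eqref{eqn:basic sequence}, every $\sigma\in\aut(X_{e,f})$ preserves $\pi$, covering some $\gamma\in\PGL_2$, and restricts on each fiber to a projective-linear isomorphism $\pi^{-1}(t)\xrightarrow{\sim}\pi^{-1}(\gamma t)$. I then define $\sigma^{\vee}$ on $\proj\mathcal{E}^{\vee}$ fiberwise by sending a hyperplane $H\subset\mathcal{E}_t$ to its image $\sigma(H)\subset\mathcal{E}_{\gamma t}$. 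Because this uses the \emph{image} of a hyperplane rather than its preimage, the assignment is covariant, i.e.\ $(\sigma\tau)^{\vee}=\sigma^{\vee}\tau^{\vee}$, so $\sigma\mapsto\sigma^{\vee}$ is a group homomorphism $\aut(X_{e,f})\to\aut(\proj\mathcal{E}^{\vee})$. Applying the same construction to $\mathcal{E}^{\vee}$ and using $(\mathcal{E}^{\vee})^{\vee}=\mathcal{E}$ produces a two-sided inverse, so the homomorphism is an isomorphism.

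Finally, I would record that $\sigma^{\vee}$ is a genuine automorphism rather than merely a fiberwise-defined bijection: since $\sigma$ is induced by a twisted bundle isomorphism $\mathcal{E}\xrightarrow{\sim}\gamma^{\ast}\mathcal{E}\otimes\mathcal{L}$ for a suitable $\mathcal{L}\in{\rm Pic}(\proj^1)$ (the twist absorbing the ambiguity inherent in $\proj\mathcal{E}$), its dual induces a twisted isomorphism $\mathcal{E}^{\vee}\xrightarrow{\sim}\gamma^{\ast}\mathcal{E}^{\vee}\otimes\mathcal{L}^{-1}$, which is regular and covers the same $\gamma$. This also exhibits the duality as compatible with the exact sequence \eqref{eqn:basic sequence}, inducing $\aut(\mathcal{E})/\C^{\times}\simeq\aut(\mathcal{E}^{\vee})/\C^{\times}$ on the kernels and the identity on $\PGL_2$. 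I expect the only real care to lie in the bookkeeping of conventions: keeping the naive-versus-Grothendieck identification straight so that $\proj\mathcal{E}^{\vee}$ is correctly the bundle of hyperplanes in the fibers of $\mathcal{E}$, and choosing the covariant (image) form of the duality so that $\sigma\mapsto\sigma^{\vee}$ is a homomorphism and not an anti-homomorphism. Everything else is formal.
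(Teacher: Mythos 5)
Your proof is correct, and it establishes the lemma by a genuinely different mechanism than the paper, though both share the same skeleton: reduce to the two facts that $\aut(X_{e,f})\simeq\aut(\proj\mathcal{E}_{e,f}^{\vee})$ and that $\proj\mathcal{E}_{e,f}^{\vee}\simeq X_{f-e,f}$ (your explicit twist $\mathcal{E}_{f-e,f}\simeq\mathcal{E}_{e,f}^{\vee}\otimes\Oline(-f)$ is exactly what the paper compresses into the phrase ``canonically isomorphic''). The difference lies in the first step. The paper never writes down a map $\aut(X_{e,f})\to\aut(\proj\mathcal{E}_{e,f}^{\vee})$ at all: it introduces the double cover $\tilde{G}={\SL}_2\ltimes({\aut}(\mathcal{E}_{e,f})/{\C}^{\times})$, using the ${\SL}_2$-linearizations of $\mathcal{E}_{e,f}$ and $\mathcal{E}_{e,f}^{\vee}$ together with the canonical identification ${\aut}(\mathcal{E}_{e,f})\simeq{\aut}(\mathcal{E}_{e,f}^{\vee})$, and observes that both automorphism groups are quotients of the single algebraic group $\tilde{G}$ by the same element $(-1,(-1)^{\ast})$. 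That argument buys two things with no extra work: surjectivity onto each automorphism group (via the sequence \eqref{eqn:basic sequence} and the lifting of ${\PGL}_2$ to ${\SL}_2$), and the fact that the resulting identification is automatically an isomorphism of \emph{algebraic} groups, being induced between quotients of one group by one closed subgroup. Your route --- defining $\sigma^{\vee}$ fiberwise as the image map on hyperplanes, with the correct covariant convention, then checking the homomorphism property, double duality, and regularity via a twisted isomorphism $\mathcal{E}\to\gamma^{\ast}\mathcal{E}\otimes\mathcal{L}$ --- is more elementary (no linearizations, no covering group) and has the added merit of exhibiting compatibility with \eqref{eqn:basic sequence}. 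The one point you leave implicit is that the bijection $\sigma\mapsto\sigma^{\vee}$ is itself a morphism of varieties, which is what ``isomorphism of algebraic groups'' in the statement demands; this is routine (the twisted isomorphism inducing $\sigma$ can be chosen algebraically in families over $\aut(X_{e,f})$, so the dual construction is algebraic as well), but it deserves a sentence, and it is precisely the issue the paper's double-cover argument disposes of for free.
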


\begin{proof}
It is convenient to consider the double cover 
\begin{equation*}
\tilde{G} = {\SL}_2\ltimes({\aut}(\mathcal{E}_{e,f})/{\C}^{\times})
\end{equation*}
of  ${\aut}(X_{e,f})$, 
where ${\SL}_2$ acts on $X_{e,f}$ and ${\aut}(\mathcal{E}_{e,f})$ through the ${\SL}_2$-linearization of $\mathcal{E}_{e,f}$.  
The kernel of the natural covering map $\tilde{G}\to{\aut}(X_{e,f})$ is generated by $(-1, (-1)^{\ast})$. 
On the other hand, by the canonical isomorphism ${\aut}(\mathcal{E}_{e,f})\simeq{\aut}(\mathcal{E}_{e,f}^{\vee})$ 
and by the ${\SL}_2$-linearization of $\mathcal{E}_{e,f}^{\vee}$, 
we have a surjective homomorphism $\tilde{G}\to{\aut}({\proj}\mathcal{E}_{e,f}^{\vee})$. 
Its kernel is also generated by $(-1, (-1)^{\ast})$. 
Hence we have ${\aut}(X_{e,f})\simeq{\aut}({\proj}\mathcal{E}_{e,f}^{\vee})$. 
Finally, ${\proj}\mathcal{E}_{e,f}^{\vee}$ is canonically isomorphic to $X_{f-e,f}$. 
\end{proof}

\subsection{$X_{0,1}$ as a blown-up ${\proj}^3$}\label{ssec:X01}

The ${\proj}^2$-bundle $X_{0,1}={\proj}\mathcal{E}_{0,1}$, 
$\mathcal{E}_{0,1}={\sheaf}_{{\proj}^1}^{\oplus2}\oplus{\Oline}(-1)$, 
has the special surface $\Sigma = {\proj}{\sheaf}_{{\proj}^1}^{\oplus2}$ 
which is invariant under ${\aut}(X_{0,1})$. 
Since a section of ${\Oline}(1)\subset\mathcal{E}_{0,1}^{\vee}$ defines the divisor $\Sigma+F\in|{\Orel}(1)|$ where $F$ is a $\pi$-fiber, 
$\Sigma$ is (the unique) member of $|L_{1,-1}|$. 
We shall distinguish the two rulings on $\Sigma\simeq{\proj}^1\times{\proj}^1$ by 
letting $\pi|_{\Sigma}\colon\Sigma\to{\proj}^1$ be the first projection, 
and the other be the second. 
In particular, $L_{0,1}|_{\Sigma}\simeq{\OSigma}(1, 0)$. 
By the adjunction formula we see that $L_{1,0}|_{\Sigma}\simeq{\OSigma}(0, 1)$.

\begin{lemma}
The morphism 
\begin{equation*}
\phi = \phi_{{\Orel}(1)}\colon X_{0,1} \to |{\Orel}(1)|^{\vee}\simeq{\proj}^3
\end{equation*} 
is the blow-up along a line $l\subset{\proj}^3$ with exceptional divisor $\Sigma$, 
and $\pi\colon X_{0,1}\to{\proj}^1$ is obtained as the resolution of the projection 
${\proj}^3\dashrightarrow{\proj}^1$ from $l$.  
\end{lemma}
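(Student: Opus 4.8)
The plan is to construct $\phi$ explicitly and then recognize it as the blow-up by way of the universal property. First I would record that $\phi=\phi_{\Orel(1)}$ is a birational morphism onto $\proj^3$. Since $(e,f,b)=(0,1,0)\neq(0,0,0)$, the general discussion in \S\ref{ssec:scroll basic} gives that $L_{1,0}=\Orel(1)$ is base-point-free with $N=e+f+3b+2=3$, so $\phi\colon X_{0,1}\to\proj^3$ is a morphism that is birational onto its image. Because $(L_{1,0}.L_{1,0}.L_{1,0})=e+f=1$ by \eqref{eqn:intersect number}, the image is a threefold of degree $1$ in $\proj^3$, hence all of $\proj^3$, and $\phi$ is a birational morphism.

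Next I would write $\phi$ in coordinates. Fixing the splitting $\mathcal{E}_{0,1}^{\vee}\simeq\Oline^{\oplus2}\oplus\Oline(1)$, the four sections spanning $H^0(\Orel(1))\simeq H^0(\mathcal{E}_{0,1}^{\vee})$ can be chosen so that, over $[s:t]\in\proj^1$ and a point $[y_0:y_1:y_2]$ of the fiber, the map reads
\begin{equation*}
\phi\colon ([s:t],[y_0:y_1:y_2])\longmapsto [y_0:y_1:sy_2:ty_2]=[X_0:X_1:X_2:X_3].
\end{equation*}
Let $l=\{X_2=X_3=0\}$. Writing $\sigma\in H^0(L_{1,-1})$ for the section cutting out the unique member $\Sigma\in|L_{1,-1}|$, and $s,t\in H^0(L_{0,1})=\pi^{\ast}H^0(\Oline(1))$, this formula gives $\phi^{\ast}X_2=s\sigma$ and $\phi^{\ast}X_3=t\sigma$. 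Hence $[\phi^{\ast}X_2:\phi^{\ast}X_3]=[s:t]$ away from $\Sigma$, which identifies $\pi$ with the resolution of the projection $\proj^3\dashrightarrow\proj^1$ from $l$ and proves the second assertion. Moreover $\phi$ is an isomorphism over $\proj^3\setminus l$ (from a point off $l$ one recovers $[s:t]=[X_2:X_3]$ and then $y_2$), while $\phi^{-1}(l)=\{y_2=0\}=\Sigma$ is mapped onto $l$ by the second ruling, since $L_{1,0}|_{\Sigma}\simeq\OSigma(0,1)$.

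Finally I would identify $\phi$ with the blow-up along $l$. Since $s,t$ have no common zero, locally one of them is a unit, so the pulled-back ideal $\phi^{-1}\mathcal{I}_l\cdot\sheaf_{X_{0,1}}=(s\sigma,t\sigma)$ equals $(\sigma)=\sheaf_{X_{0,1}}(-\Sigma)$, which is invertible. By the universal property of the blow-up, $\phi$ factors as $X_{0,1}\xrightarrow{g}\BlP\xrightarrow{\beta}\proj^3$ with $\beta$ the blow-down and $g^{\ast}\sheaf_{\BlP}(-E)\simeq\sheaf_{X_{0,1}}(-\Sigma)$, where $E$ is the exceptional divisor. The hard part is to see that the birational morphism $g$ of smooth projective threefolds is an isomorphism: it is already an isomorphism over $\proj^3\setminus l$, and over $l$ one checks that $g$ does not further collapse the rulings of $\Sigma$ that $\phi$ contracts, because $\sheaf_{X_{0,1}}(\Sigma)|_{\Sigma}\simeq L_{1,-1}|_{\Sigma}\simeq\OSigma(-1,1)$ restricts to degree $-1\neq0$ on each such ruling, forcing $g$ to be finite along them. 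Thus $g$ is quasi-finite and birational onto the normal variety $\BlP$, hence an isomorphism by Zariski's main theorem. (Alternatively one can bypass the universal property: projection from $l$ exhibits $\BlP$ as a $\proj^2$-bundle over $\proj^1$, and matching it directly with $X_{0,1}=\proj\mathcal{E}_{0,1}$ through the incidence description $\{X_2t=X_3s\}\subset\proj^3\times\proj^1$ yields the isomorphism at once.)
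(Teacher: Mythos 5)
Your proposal is correct, and it establishes the same key geometric facts as the paper — birationality from $(\Orel(1))^3=1$, the collapse of $\Sigma$ onto a line $l$ along the ruling on which $\Orel(1)$ is trivial, and the $\pi$-fibers mapping isomorphically to the planes through $l$ — but it completes the argument by a different mechanism. The paper's proof is synthetic and stops at ``this implies our claim,'' leaving implicit the final identification of $X_{0,1}$ with $\BlP$; what is implicitly being used there is exactly your parenthetical alternative, namely that $(\phi,\pi)$ matches $X_{0,1}$ fiberwise over ${\proj}^1$ with the incidence correspondence $\{X_2t=X_3s\}\subset{\proj}^3\times{\proj}^1$, which is the blow-up as the graph closure of the projection from $l$. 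Your main route instead makes the identification airtight in a more algebraic way: explicit coordinates give $\phi^{\ast}\mathcal{I}_l\cdot\sheaf_{X_{0,1}}=\sheaf_{X_{0,1}}(-\Sigma)$, the universal property of blowing up yields a factorization $g\colon X_{0,1}\to\BlP$, and the computation $\sheaf_{X_{0,1}}(\Sigma)|_{\Sigma}\simeq\OSigma(-1,1)$, nontrivial on the contracted ruling, rules out $g$ collapsing any curve, so Zariski's main theorem forces $g$ to be an isomorphism. This costs more writing but buys rigor precisely at the one delicate point the paper glosses over (that a birational factorization through the blow-up cannot differ from it by a further modification); it also makes the second assertion, that $\pi$ resolves the projection from $l$, immediate from the formula $[\phi^{\ast}X_2:\phi^{\ast}X_3]=[s:t]$. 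Either your ZMT argument or your incidence-variety shortcut would serve as a complete proof; the latter is closest in spirit to what the paper intends.
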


\begin{proof}
We see that $\phi$ is birational because $({\Orel}(1))^3=1$. 
Since $|{\Orel}(1)| \vert_{\Sigma}=|{\OSigma}(0, 1)|$, 
$\phi$ maps $\Sigma$ to a line $l$, contracting the second ruling 
and mapping the first ruling fibers isomorphically to $l$. 
On the other hand, each $\pi$-fiber is mapped isomorphically to a plane containing $l$. 
This implies our claim. 
\end{proof}

Since $L_{a,b}\simeq\phi^{\ast}{\Ospace}(a+b)\otimes{\sheaf}_{X_{0,1}}(-b\Sigma)$, 
we can identify $|L_{a,b}|$ with the linear system of 
surfaces of degree $a+b$ in ${\proj}^3$ which have multiplicity $\geq b$ along $l$. 
To describe it explicitly, 
take homogeneous coordinates $[X_0,\cdots, X_3]$ of ${\proj}^3$ 
and let $l$ be defined by $X_0=X_1=0$. 
Then the subspace $H^0(L_{a,b})\subset H^0({\Ospace}(a+b))$ is given by 
\begin{equation}\label{eqn:filtration H0(X01Lab)}
\mathop{\bigoplus}_{i=b}^{a+b} V_i(X_0, X_1)\otimes V_{a+b-i}(X_2, X_3), 
\end{equation}
where $V_d(X_s, X_t)$ denotes the space of homogeneous polynomials of degree $d$ in variables $X_s, X_t$.  
  
We can regard ${\aut}(X_{0,1})$ as the subgroup of ${\PGL}_4$ stabilizing $l$. 
It is convenient to consider inside ${\GL}_4$ the following double cover of ${\aut}(X_{0,1})$: 
\begin{equation*}\label{eqn:cover grp X01 I}
\tilde{G} = 
\left\{ \;  \begin{pmatrix} g_1 & 0 \\
                                        h & g_2 \end{pmatrix} \in {\GL}_4   \Biggm\vert  
g_1\in{\SL}_2, \:  g_2\in{\GL}_2, \:  h\in {\rm M}_{2,2} \right\}. 
\end{equation*}
This group is naturally isomorphic to the semidirect product 
\begin{equation*}\label{eqn:cover grp X01 II}
\tilde{G} \simeq ({\SL}_2\times{\GL}_2) \ltimes {\hom}(V_1^{(1)}, V_1^{(2)}), 
\end{equation*}
where 
$V_1^{(1)}={\C}\langle X_0, X_1\rangle$ and $V_1^{(2)}={\C}\langle X_2, X_3\rangle$ are two copies of $V_1$, 
${\GL}_2$ acts on $V_1^{(2)}$ in the standard way, 
and ${\SL}_2$ acts on $(V_1^{(1)})^{\vee}$ by the dual representation of its standard action on $V_1^{(1)}$. 
The kernel of the projection $\tilde{G}\to{\aut}(X_{0,1})$ is generated by $(-1, -1)\in{\SL}_2\times{\GL}_2$. 

Now $H^0(L_{a,b})$ is a $\tilde{G}$-representation, 
and \eqref{eqn:filtration H0(X01Lab)} gives the irreducible decomposition 
under the subgroup ${\SL}_2\times{\GL}_2\subset\tilde{G}$: 
the $i$-th summand in \eqref{eqn:filtration H0(X01Lab)} is the  ${\SL}_2\times{\GL}_2$-representation $V_{i,a+b-i}$. 
To express the action of the unipotent radical ${\hom}(V_1, V_1)$, for $h\in {\hom}(V_1, V_1)$ we set 
\begin{equation*}
{\rm exp}(h)=(1, h, h^{\otimes2}/2,\cdots)\in \mathop{\oplus}_{d\geq0} {\hom}(V_1, V_1)^{\otimes d}. 
\end{equation*}
Then $h$ acts on $H^0(L_{a,b})$ by the linear maps 
\begin{equation}\label{eqn:unipotent radical action X01 general}
\langle \cdot,\, {\rm exp}(h) \rangle : V_{i,a+b-i} \to \mathop{\oplus}_{d\geq0} V_{i+d,a+b-i-d}  
\end{equation}
induced from the multiplication $V_i\times V_1\to V_{i+1}$ and the contraction $V_j\times V_1^{\vee}\to V_{j-1}$. 
In particular, the subspace $F_i=\oplus_{j\geq i}V_{j,a+b-j}$ is $\tilde{G}$-invariant. 
It is the space of polynomials of degree $a+b$ vanishing of order $\geq i$ along $l$: 
that is, $F_i\simeq H^0(L_{a+b-i,i})$. 
Geometrically the quotient map $H^0(L_{a,b})\to H^0(L_{a,b})/F_i$ gives 
the $\leq(i-b-1)$-th Taylor development along $\Sigma$ of the sections of $L_{a,b}$. 
Here note that $L_{a,b}|_{\Sigma}\otimes(N_{\Sigma/X_{0,1}})^{-k}\simeq \mathcal{O}_{\Sigma}(b+k, a-k)$. 

We remark that $L_{a,b}$ admits a $\tilde{G}$-linearization 
through that of ${\Ospace}(a+b)$ and the ideal sheaf $\mathcal{I}_l^b$ (for $b\geq0$). 
Since the element $(-1, -1)\in{\SL}_2\times{\GL}_2$ acts on $L_{a,b}$ by multiplication by $(-1)^{a+b}$, we see that 

\begin{lemma}\label{linearization X01}
The bundle $L_{a,b}$ is ${\aut}(X_{0,1})$-linearized when $a+b$ is even. 
\end{lemma}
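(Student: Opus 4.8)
The plan is to obtain the $\aut(X_{0,1})$-linearization by descending the $\tilde{G}$-linearization of $L_{a,b}$ along the central quotient $\tilde{G}\to\aut(X_{0,1})$ recorded in this subsection. The general mechanism I would invoke is the following descent principle: if an algebraic group $\tilde{G}$ acts on a connected variety $X$ through a quotient $\aut(X_{0,1})=\tilde{G}/N$, so that the normal subgroup $N$ acts trivially on $X$, then any $\tilde{G}$-linearized line bundle $L$ inherits an $N$-action fixing the base pointwise. Consequently $N$ acts on each fiber $L_x\cong\C$ by a single scalar, defining a character $\chi\colon N\to\C^{\times}$, and the $\tilde{G}$-linearization factors through an $\aut(X_{0,1})$-linearization if and only if $\chi$ is trivial. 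Indeed, that is exactly the condition for the $\tilde{G}$-action on the total space of $L$ to descend to $\tilde{G}/N$.

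To apply this, I would first take the $\tilde{G}$-linearization of $L_{a,b}$ furnished by the preceding remark, built from the standard linearization of $\Ospace(a+b)$ together with the ideal powers $\mathcal{I}_l^b$. The kernel $N$ of $\tilde{G}\to\aut(X_{0,1})$ is generated by the single central element $(-1,-1)\in\SL_2\times\GL_2$ and is therefore isomorphic to $\Z/2$, so the character $\chi$ is completely determined by the one value $\chi((-1,-1))$. That value has already been computed in the remark to be $(-1)^{a+b}$. Hence $\chi$ is trivial precisely when $a+b$ is even, and in that case the descent yields the asserted $\aut(X_{0,1})$-linearization of $L_{a,b}$.

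I do not expect any serious obstacle here: both inputs, namely the existence of the $\tilde{G}$-linearization and the scalar $(-1)^{a+b}$ through which the kernel generator acts, are already in hand, so the lemma reduces to the formal descent statement. The only points deserving a word of justification are that the fiber character $\chi$ is well defined independently of the base point, which follows from the connectedness of $X_{0,1}$, and that $N$ is genuinely $\Z/2$, which makes the parity of $a+b$ both the necessary and the sufficient condition for the linearization to descend.
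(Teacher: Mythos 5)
Your proposal is correct and follows exactly the paper's own (very compressed) argument: the paper derives the lemma from the remark that $L_{a,b}$ carries a $\tilde{G}$-linearization via $\Ospace(a+b)$ and $\mathcal{I}_l^b$, and that the kernel generator $(-1,-1)$ of $\tilde{G}\to\aut(X_{0,1})$ acts on it by $(-1)^{a+b}$, so the linearization descends precisely when $a+b$ is even. You have merely made explicit the standard descent principle that the paper leaves implicit, which is fine.
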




\subsection{$X_{1,1}$ as a small resolution of a quadric cone}\label{ssec:X11}

The ${\proj}^2$-bundle $X_{1,1}={\proj}\mathcal{E}_{1,1}$, 
$\mathcal{E}_{1,1}={\Oline}\oplus{\Oline}(-1)^{\oplus2}$, 
has the special section $\sigma={\proj}{\Oline}$ which is invariant under ${\aut}(X_{1,1})$. 
Since $({\Orel}(1).\sigma)=0$, the morphism 
\begin{equation*}\label{eqn:bl-up cone}
\phi=\phi_{{\Orel}(1)} : X_{1,1} \to |{\Orel}(1)|^{\vee}\simeq{\proj}^4 
\end{equation*}
contracts $\sigma$ to a point, say $p_0$.  

\begin{lemma}
The image $Q=\phi(X_{1,1})$ is the quadric cone 
over a smooth quadric $Q_0\subset{\proj}^3$ with vertex $p_0$, 
and $\phi:X_{1,1}\to Q$ is a small resolution of $p_0$ with exceptional curve $\sigma$. 
\end{lemma}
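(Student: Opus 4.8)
The plan is to make $\phi$ completely explicit through the identification $\pi_{\ast}\Orel(1)\simeq\mathcal{E}_{1,1}^{\vee}=\Oline\oplus\Oline(1)^{\oplus2}$ and to read off the image and the fibers directly. First I would fix coordinates $[s:t]$ on $\proj^1$, so that $V_1=\langle s, t\rangle$, and take the basis of $H^0(\Orel(1))=H^0(\mathcal{E}_{1,1}^{\vee})$ adapted to the splitting: one coordinate $z_0$ coming from the $\Oline$-summand, and two pairs $z_1=s\xi_1,\,z_2=t\xi_1$ and $z_3=s\xi_2,\,z_4=t\xi_2$ coming from the generators $\xi_1,\xi_2$ of the two copies of $\Oline(1)$. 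A point of $X_{1,1}$ over $[s_0:t_0]$ is a line $\langle(u_0,u_1,u_2)\rangle$ in the fiber of $\mathcal{E}_{1,1}$, and evaluating these sections on it gives, in suitable fiber coordinates,
\begin{equation*}
\phi = [\, u_0 : s_0u_1 : t_0u_1 : s_0u_2 : t_0u_2 \,].
\end{equation*}

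From this formula both assertions follow. The identities $z_1z_4=s_0t_0u_1u_2=z_2z_3$ show that the image lies on the quadric $Q=\{z_1z_4-z_2z_3=0\}\subset\proj^4$. Since the variable $z_0$ is absent, this quadric has rank $4$: it is the cone, with vertex $p_0=[1:0:0:0:0]$, over the smooth quadric surface $Q_0=\{z_1z_4-z_2z_3=0\}$ in the hyperplane $\proj^3=\{z_0=0\}$, and $Q_0\simeq\proj^1\times\proj^1$ via the Segre embedding. Because $\phi$ is birational onto its image (the criterion of \S\ref{ssec:scroll basic} applies, as $(e,f,b)=(1,1,0)\ne(0,0,0)$), that image is an irreducible threefold inside the irreducible threefold $Q$, hence equals $Q$; as a consistency check, $(\Orel(1)^3)=e+f=2$ by \eqref{eqn:intersect number} forces $\deg Q=2$, and the scroll type $(b+f,b+e,b)=(1,1,0)$ is exactly that of a quadric cone. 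Evaluating $\phi$ on $\sigma=\{u_1=u_2=0\}$ yields $[u_0:0:0:0:0]=p_0$, recovering the contraction of $\sigma$ already noted.

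It remains to check that $\phi$ is a small resolution. Away from $\sigma$ one has $(u_1,u_2)\ne(0,0)$, so some $z_i$ with $1\le i\le4$ is nonzero; the ratios among $z_1,z_2,z_3,z_4$ then recover $[s_0:t_0]$, and $z_0$ together with any nonvanishing $z_i$ recovers the line $\langle(u_0,u_1,u_2)\rangle$. Thus $\phi$ is injective on $X_{1,1}\setminus\sigma$ with image exactly the smooth locus $Q\setminus\{p_0\}$. I would then promote this bijective morphism of smooth varieties to an isomorphism $X_{1,1}\setminus\sigma\simeq Q\setminus\{p_0\}$ by Zariski's main theorem (using that $Q\setminus\{p_0\}$ is normal and $\phi$ is birational). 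Since the only contracted locus is the curve $\sigma$, of codimension $2$ in the threefold $X_{1,1}$, no divisor is contracted and $\phi$ is a small resolution of the singular point $p_0$ with exceptional curve $\sigma$.

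The computation is routine once the coordinates are in place; the only steps needing care are the last two — upgrading the set-theoretic bijection over $Q\setminus\{p_0\}$ to a genuine isomorphism, so that $\phi$ is honestly a \emph{resolution} rather than merely a birational contraction, and confirming that the contracted locus is purely one-dimensional, so that the resolution is \emph{small}.
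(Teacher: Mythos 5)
Your proof is correct, but it follows a genuinely different route from the paper's. The paper argues abstractly: from $({\Orel}(1))^3=2$ (via \eqref{eqn:intersect number}) and nondegeneracy of $Q$ in ${\proj}^4$ it concludes that $Q$ is a quadric hypersurface and $\phi$ is birational; then, since every $\pi$-fiber maps isomorphically to a plane passing through $p_0=\phi(\sigma)$, the threefold $Q$ is swept out by planes through a common point and hence must be a cone with vertex $p_0$. You instead coordinatize: writing out the standard scroll parametrization $[u_0:s_0u_1:t_0u_1:s_0u_2:t_0u_2]$, you exhibit $Q$ as the rank-$4$ quadric $\{z_1z_4=z_2z_3\}$, identify $p_0$ and $Q_0$ explicitly, and check injectivity of $\phi$ off $\sigma$ together with $\phi^{-1}(p_0)=\sigma$, upgrading to an isomorphism onto $Q\setminus\{p_0\}$ by Zariski's main theorem. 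What your computation buys is completeness: the paper's three-line argument leaves implicit both the smoothness of $Q_0$ (i.e., why the quadric has rank exactly $4$ rather than $3$) and the actual content of the phrase ``small resolution'' (that $\phi$ is an isomorphism away from $\sigma$, so that only a curve, and no divisor, is contracted), whereas you verify both. What the paper's argument buys is brevity and coordinate-freeness, deriving the cone structure directly from intersection numbers and the family of planes, in the same style as the neighboring lemma for $X_{0,1}$. One small point of care in your write-up: when invoking Zariski's main theorem you should note that the restriction $\phi|_{X_{1,1}\setminus\sigma}$ is proper because $X_{1,1}\setminus\sigma=\phi^{-1}(Q\setminus\{p_0\})$ is a full preimage under the projective morphism $\phi$ — this is exactly what your computation of the fiber over $p_0$ provides, so the gap is only expository.
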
 

\begin{proof}
Since $({\Orel}(1))^3=2$ and $Q$ is nondegenerate, 
$Q$ must be a quadric hypersurface and $\phi:X_{1,1}\to Q$ is birational. 
The $\pi$-fibers are mapped isomorphically to planes, 
which intersect with each other at $p_0$. 
Swept out by those planes, $Q$ must be a quadric cone with vertex $p_0$. 
\end{proof}

Let $f\colon Q\dashrightarrow Q_0$ be the projection from $p_0$. 
Via the pullback by $f$, the two rulings on $Q_0\simeq{\proj}^1\times{\proj}^1$ correspond to 
the two families of planes on $Q$ which pass through $p_0$. 
We shall distinguish them so that $\pi:X_{1,1}\to{\proj}^1$ is the resolution of 
$Q\dashrightarrow Q_0\stackrel{\pi_1}{\to}{\proj}^1$ where $\pi_1$ is the ``first'' projection. 
In other words, the ``first'' family is the $\phi$-image of $|L_{0,1}|$.  
On the other hand, the ``second'' family gives rise to $|L_{1,-1}|$, 
whose member is the blow-up of such a plane at $p_0$ and contains $\sigma$ as the $(-1)$-curve. 
The composition 
\begin{equation*}
X_{1,1}\stackrel{\phi}{\to}Q\stackrel{f}{\dashrightarrow}Q_0\simeq{\proj}^1\times{\proj}^1 
\end{equation*}
is given by the relative projection from $\sigma$ of the ${\proj}^2$-bundle $X_{1,1}/{\proj}^1$.


In order to describe ${\aut}(X_{1,1})$, consider the blow-up $\hat{Q}\to X_{1,1}$ along $\sigma$. 
$\hat{Q}$ is the blow-up of $Q$ at $p_0$ and so is 
the ${\proj}^1$-bundle ${\proj}({\OQQ}(1)\oplus{\OQQ})$ over $Q_0$ with exceptional divisor ${\proj}{\OQQ}(1)$. 
As in \eqref{eqn:basic sequence}, we have the exact sequence 
\begin{equation*}
1 \to {\aut}({\OQQ}(1)\oplus{\OQQ})/{\C}^{\times} \to {\aut}(\hat{Q}) \to {\aut}(Q_0) \to 1. 
\end{equation*}
We may identify the quotient group ${\aut}({\OQQ}(1)\oplus{\OQQ})/{\C}^{\times}$ with 
the subgroup $R\subset{\aut}({\OQQ}(1)\oplus{\OQQ})$  
consisting of isomorphisms of the form 
$\begin{pmatrix}\alpha & s \\ 0 & 1\end{pmatrix}$ 
where $\alpha\in{\C}^{\times}$ and $s\in{\hom}({\OQQ}, {\OQQ}(1))$. 
In particular, $R\simeq {\C}^{\times}\ltimes H^0({\OQQ}(1))$. 
Now ${\aut}(X_{1,1})$ is the identity component of ${\aut}(Q)={\aut}(\hat{Q})$. 
We have its natural ${\Z}/2\times{\Z}/2$-covering 
\begin{equation*}
\tilde{G}' = ({\SL}_2\times{\SL}_2) \ltimes R \simeq ({\SL}_2\times{\SL}_2\times{\C}^{\times})\ltimes V_{1,1}. 
\end{equation*}
Dividing $\tilde{G}'$ by $(1, -1, -1)\in({\SL}_2)^2\times{\C}^{\times}$, 
we obtain a double cover $\tilde{G}$ of ${\aut}(X_{1,1})$ isomorphic to $({\SL}_2\times{\GL}_2)\ltimes V_{1,1}$. 
The kernel of the projection $\tilde{G}\to{\aut}(X_{1,1})$ is generated by $(-1, -1)\in{\SL}_2\times{\GL}_2$. 

Every line bundle on $X_{1,1}$ is obtained as the extension of that on $X_{1,1}\backslash \sigma=Q\backslash p_0$, 
which in turn is the pullback by $f$ of that on $Q_0$. 
Explicitly, we have 
\begin{equation}\label{eqn:LB X11 cone}
L_{a,b} \simeq f^{\ast}{\OQQ}(b, 0)\otimes {\sheaf}_Q(a) \simeq f^{\ast}{\OQQ}(a+b, a) 
\end{equation}
over $Q\backslash p_0$. 

As in \S \ref{ssec:X11}, 
$H^0(L_{a,b})$ is a $\tilde{G}'$-representation and has the invariant filtration 
\begin{equation*}\label{eqn:filtration X11 gene}
0\subset F_a\subset F_{a-1}\subset \cdots \subset F_1\subset H^0(L_{a,b}) 
\end{equation*}
where $F_i$ is the space of sections vanishing of order $\geq i$ along $\sigma$. 
To be more explicit, 
we take bi-homogeneous coordinates $([X_0, X_1], [Y_0, Y_1])$ of $Q_0$ 
and homogeneous coordinates $[Z, Z_{00}, Z_{01}, Z_{10}, Z_{11}]$ of ${\proj}^4$ where  
$Z_{ij}=X_iY_j$ and $p_0=[1, 0,\cdots, 0]$. 
By \eqref{eqn:LB X11 cone} we may identify $H^0(L_{a,b})$ with 
\begin{equation}\label{eqn:irr decomp Lab X11}
\mathop{\bigoplus}_{i=0}^{a} V_{b+i}(X_0, X_1)\otimes V_{i}(Y_0, Y_1)Z^{a-i}. 
\end{equation}
This expression is the irreducible decomposition under the subgroup $({\SL}_2)^2\subset\tilde{G}'$, 
the $i$-th summand isomorphic to $V_{b+i,i}$. 
We then have $F_i=\oplus_{j\geq i}V_{b+j,j}$. 
The torus ${\C}^{\times}$ acts on $V_{b+i,i}$ by weight $i-a$. 
(Tensoring \eqref{eqn:irr decomp Lab X11} with the weight $a$ scalar representation of ${\C}^{\times}$, 
we obtain a $\tilde{G}$-representation on $H^0(L_{a,b})$.)  
The unipotent radical $V_{1,1}\ni h$ acts by the multiplication maps 
\begin{equation*}
\cdot{\exp}(h) : V_{b+i,i} \to \mathop{\oplus}_{d=0}^{a-i} V_{b+i+d,i+d}  
\end{equation*}
where ${\exp}(h)=(1, h, h^{\otimes2}/2, \cdots)\in\oplus_{d\geq0}V_{d,d}$. 
In particular, the quotient representation $H^0(L_{a,b})/F_i$ is isomorphic to $H^0(L_{i-1,b})$. 
Geometrically the quotient map $H^0(L_{a,b})\to H^0(L_{a,b})/F_i$ gives 
the $\leq(i-1)$-th Taylor development of the sections of $L_{a,b}$ along the exceptional divisor $E$ of $\hat{Q}$. 
Here notice that $L_{a,b}|_{E}\otimes(N_{E/\hat{Q}})^{-k}\simeq \mathcal{O}_{E}(b+k, k)$.

We remark that 

\begin{lemma}\label{linearization X11}
The line bundle $L_{a,b}$ is ${\aut}(X_{1,1})$-linearized when $b$ is even. 
\end{lemma}

\begin{proof}
We have natural ${\aut}(X_{1,1})$-linearizations on 
$L_{0,-2}=\pi^{\ast}K_{{\proj}^1}$, $L_{-2,0}=f^{\ast}K_{Q_0}$ and $L_{-3,0}=K_{X_{1,1}}$. 
When $b$ is even, $L_{a,b}$ can be written as a tensor product of these bundles.  
\end{proof}


\begin{remark}
More generally, on $X_{e,f}$ with $e\ne0$ (resp. $0=e<f$), 
the bundle $L_{a,b}$ is ${\aut}(X_{e,f})$-linearized if $b$ (resp. $af+b$) is even. 
\end{remark}


\section{Tetragonal loci}\label{sec:tetra}

In this section we follow Schreyer's description \cite{Sc} of tetragonal curves to derive a birational model of 
the tetragonal locus $\mathcal{T}_g$ as a quotient space. 
Notice that we are assuming $g\geq7$.  
First recall some basic facts: 
\begin{itemize}
\item A tetragonal curve $C$ is not trigonal; 
\item When $C$ is general in $\mathcal{T}_g$, its tetragonal pencil $C\to{\proj}^1$ is unique; 
\item If $g\geq10$, $C$ has a unique tetragonal pencil precisely when $C$ is not bielliptic; 
\item $\mathcal{T}_g$ is irreducible of dimension $2g+3$. 
\end{itemize}
The first three properties can be seen by looking at the product $C\to{\proj}^1\times{\proj}^1$ of two pencils. 
See \cite{A-C} and the references therein for the last property. 

Now let $\pi:C\to{\proj}^1$ be a tetragonal map. 
We regard $C$ as canonically embedded in ${\proj}^{g-1}$. 
For each $p\in{\proj}^1$, the (possibly infinitely near) four points $\pi^{-1}(p)$ span a plane in ${\proj}^{g-1}$ by Riemann-Roch. 
The $3$-fold swept out by those planes is a rational normal scroll: we may write it as $\phi_{L_{1,n}}(X_{e,f})$ 
for some $0\leq e\leq f$ and $n\geq0$. 
If we view $C$ as a curve on $X_{e,f}$, 
it turns out to be a complete intersection of two surfaces in $|L_{2,b}|$, $|L_{2,c}|$ for some $b\leq c$ (\cite{Sc}). 
Comparing the adjunction formula for $C$ with the relation $L_{1,n}|_C\simeq K_C$, we see that 
\begin{equation*}\label{eqn:scroll degree from C}
n = b+c+e+f-2. 
\end{equation*}
Calculating ${\deg}K_C=(L_{1,n}.L_{2,b}.L_{2,c})$, we obtain 
\begin{equation}\label{eqn:relation of invariants}
g = 4(e+f) + 3(b+c-1), 
\end{equation}
which imposes a relation between $(e, f)$ and $(b, c)$. 

Let $\hat{\mathcal{T}}_g$ be the moduli space of tetragonal curves of genus $g\geq7$ \textit{given} with a tetragonal pencil $C\to{\proj}^1$. 
The natural projection $\hat{\mathcal{T}}_g\to\mathcal{T}_g$ is birational. 
For $0\leq e\leq f$ and $b\leq c$, let 
$\hat{\mathcal{T}}_g(e, f; b, c)\subset\hat{\mathcal{T}}_g$ be the locus of those $C\to{\proj}^1$ which lies on $X_{e,f}$ 
as a complete intersection of surfaces in $|L_{2,b}|$ and $|L_{2,c}|$. 
Then we have the stratification 
\begin{equation}\label{eqn:stratification}
\hat{\mathcal{T}}_g = \mathop{\sqcup}_{(e,f;b,c)}\hat{\mathcal{T}}_g(e, f; b, c)
\end{equation}
where $(e, f)$ and $(b, c)$ satisfy \eqref{eqn:relation of invariants}. 
This presentation still includes many empty strata (see \cite{Sc}), but we do not mind this redundancy here. 

Since the embedding $C\subset X_{e,f}$ is canonical, we find that each stratum is 
the quotient by ${\aut}(X_{e,f})$ of the parameter space of those complete intersection curves. 
More precisely, when $b=c$, we have 
\begin{equation*}\label{eqn:stratum as Grassmann quotient}
\hat{\mathcal{T}}_g(e,f; b, b) \sim \mathbb{G}(1, |L_{2,b}|)/{\aut}(X_{e,f}) 
\end{equation*}
where $\mathbb{G}(1, |L_{2,b}|)$ is the Grassmannian of pencils in $|L_{2,b}|$. 
On the other hand, when $b<c$, the surface $S\in|L_{2,b}|$ containing $C$ is unique, 
while those in $|L_{2,c}|$ are unique up to $S+|L_{0,c-b}|$. 
To express this situation, let $\mathcal{L}\to|L_{2,b}|$ be the tautological bundle 
and $\mathcal{E}\to|L_{2,b}|$ be the quotient bundle 
\begin{equation}\label{eqn:VB describing c.i.}
\mathcal{E} = \underline{H^0(L_{2,c})}/\mathcal{L}\otimes H^0(L_{0,c-b}),
\end{equation}
where $\underline{H^0(L_{2,c})}$ means the product bundle $H^0(L_{2,c})\times|L_{2,b}|$, 
and the bundle homomorphism $\mathcal{L}\otimes H^0(L_{0,c-b})\to\underline{H^0(L_{2,c})}$ is induced by 
the multiplication map $H^0(L_{2,b})\times H^0(L_{0,c-b})\to H^0(L_{2,c})$. 
Then we have 
\begin{equation*}\label{eqn:stratum as bundle quotient}
\hat{\mathcal{T}}_g(e, f; b, c) \sim {\proj}\mathcal{E}/{\aut}(X_{e,f}). 
\end{equation*}

In order to study the birational type of $\mathcal{T}_g$, we want to identify the largest stratum in \eqref{eqn:stratification}. 
This was done by del Centina and Gimigliano \cite{dC-G}. 
The result depends on the congruence of $g$ modulo $6$ and is summarized as follows. 
(See also \cite{C-dC} \S 3.) 

\begin{proposition}\label{tetra loci}
Let $\mathcal{E}$ be the bundle defined in \eqref{eqn:VB describing c.i.} with $c=b+1$. 

\noindent
$(0)$ When $g\equiv0 \; (6)$, denote $g=6b$. 
Then $(L_{2,b}, L_{2,b+1})$ complete intersections in $X_{0,0}={\proj}^1\times{\proj}^2$ give the largest stratum. 
Hence $\mathcal{T}_{6b}\sim{\proj}\mathcal{E}/{\PGL}_2\times{\PGL}_3$. 

\noindent
$(1)$ When $g\equiv1 \: (6)$, denote $g=6b+1$. 
Then $(L_{2,b}, L_{2,b})$ complete intersections in $X_{0,1}$ give the largest stratum. 
Thus $\mathcal{T}_{6b+1}\sim\mathbb{G}(1, |L_{2,b}|)/{\aut}(X_{0,1})$. 

\noindent
$(2)$ When $g\equiv2 \; (6)$, denote $g=6b+8$. 
Then $(L_{2,b}, L_{2,b+1})$ complete intersections in $X_{1,1}$ give the largest stratum. 
Hence $\mathcal{T}_{6b+8}\sim{\proj}\mathcal{E}/{\aut}(X_{1,1})$. 

\noindent
$(3)$ When $g\equiv3 \; (6)$, denote $g=6b-3$. 
Then $(L_{2,b}, L_{2,b})$ complete intersections in $X_{0,0}$ give the largest stratum. 
Thus $\mathcal{T}_{6b-3}\sim\mathbb{G}(1, |L_{2,b}|)/{\PGL}_2\times{\PGL}_3$. 

\noindent
$(4)$ When $g\equiv4 \: (6)$, denote $g=6b+4$. 
Then $(L_{2,b}, L_{2,b+1})$ complete intersections in $X_{0,1}$ give the largest stratum. 
Hence $\mathcal{T}_{6b+4}\sim{\proj}\mathcal{E}/{\aut}(X_{0,1})$. 

\noindent
$(5)$ When $g\equiv5 \; (6)$, denote $g=6b+5$. 
Then $(L_{2,b}, L_{2,b})$ complete intersections in $X_{1,1}$ give the largest stratum. 
Thus $\mathcal{T}_{6b+5}\sim\mathbb{G}(1, |L_{2,b}|)/{\aut}(X_{1,1})$. 
\end{proposition}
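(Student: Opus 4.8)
The plan is to prove each of the six cases by combining del Centina--Gimigliano's determination of the generic scroll type (\cite{dC-G}) with the general quotient description developed earlier in this section. The congruence $[g]\in\Z/3\Z$ selects the ambient $\proj^2$-bundle $X_{e,f}$ via the relation \eqref{eqn:relation of invariants}, namely $g=4(e+f)+3(b+c-1)$, while the parity of $g$ decides whether $b=c$ (so the stratum is a Grassmann-bundle quotient) or $c=b+1$ (so it is a $\proj\mathcal{E}$-quotient). The bulk of the argument is thus bookkeeping: for each residue I would solve \eqref{eqn:relation of invariants} to pin down $(e,f)$ and $(b,c)$, then quote the appropriate quotient presentation from this section.

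\textbf{Solving the numerical constraint.} First I would treat the three values of $e+f$ in turn. When $3\mid g$ one finds $e+f\equiv0$, forcing $(e,f)=(0,0)$ and $X=X_{0,0}=\proj^1\times\proj^2$; this covers cases $(0)$ and $(3)$. When $g\equiv1\pmod 3$ one gets $e+f\equiv1$, so $(e,f)=(0,1)$ and $X=X_{0,1}$; this covers cases $(1)$ and $(4)$. When $g\equiv2\pmod 3$ one gets $e+f\equiv2$, and the generic choice is $(e,f)=(1,1)$ with $X=X_{1,1}$; this covers cases $(2)$ and $(5)$. In each ambient the residue mod $6$ fixes $b+c$ through \eqref{eqn:relation of invariants}, and I would record the resulting $b$ in the normalization stated in the proposition (e.g. $g=6b+1$ in case $(1)$), checking that $b\leq c$ and that $c-b\in\{0,1\}$ as dictated by the parity.

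\textbf{Passing to the quotient.} Once $(e,f;b,c)$ is identified as the largest stratum, the birational identification of $\mathcal{T}_g$ with the quotient is immediate from the two displayed formulas preceding Proposition~\ref{tetra loci}: for $b=c$ (the odd-$g$ cases $(1),(3),(5)$) one uses $\hat{\mathcal{T}}_g(e,f;b,b)\sim\mathbb{G}(1,|L_{2,b}|)/{\aut}(X_{e,f})$, and for $c=b+1$ (the even-$g$ cases $(0),(2),(4)$) one uses $\hat{\mathcal{T}}_g(e,f;b,b+1)\sim\proj\mathcal{E}/{\aut}(X_{e,f})$ with $\mathcal{E}$ as in \eqref{eqn:VB describing c.i.}. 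Since $\hat{\mathcal{T}}_g\to\mathcal{T}_g$ is birational and the listed stratum is dense, the asserted birational equivalences follow. For $X_{0,0}$ I would substitute ${\aut}(X_{0,0})={\PGL}_2\times{\PGL}_3$ as recorded after \eqref{eqn:basic sequence}.

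\textbf{The main obstacle.} The genuinely substantive input, which I would cite rather than reprove, is del Centina--Gimigliano's result that precisely the stratum named in each case is the \emph{largest} (equivalently dense) one in the stratification \eqref{eqn:stratification}; a priori several $(e,f)$ with the same $e+f$ could satisfy the numerical relation, and ruling out the others—verifying that the generic tetragonal curve lies on the scroll of minimal unbalancedness—is the hard geometric point. Granting that, the proposition is a matter of correctly matching each congruence class to its scroll and to the right quotient model, so the care lies in the case-by-case arithmetic rather than in any new geometry.
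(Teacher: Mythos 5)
Your proposal is correct in substance but follows a genuinely different route from the paper's own proof. You treat the identification of the dense stratum as a black box imported from \cite{dC-G} and then do the (correct) bookkeeping matching each residue class to its quotient model; this is legitimate, since the paper itself attributes the classification to del Centina--Gimigliano in the sentence introducing the proposition. The paper's written proof, however, is explicitly ``another argument'' that makes the statement self-contained: since $\hat{\mathcal{T}}_g$ is irreducible of dimension $2g+3$, a stratum is dense as soon as it has dimension $2g+3$, and for the listed quotient models this is a two-line count using $\dim\aut(X_{0,0})=\dim\aut(X_{0,1})=\dim\aut(X_{1,1})=11$ and $h^0(L_{2,b})=4(e+f)+6(b+1)$ from \S \ref{sec:scroll} (e.g.\ in case (1), $2\bigl(h^0(L_{2,b})-2\bigr)-11=12b+5=2g+3$). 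The count suffices because the inequalities go the right way: the quotient of the parameter space always has dimension at least $\dim(\text{parameter space})-\dim\aut(X_{e,f})$, while any stratum has dimension at most $2g+3$, so equality of the naive count with $2g+3$ forces density (and, incidentally, generic finiteness of stabilizers). One caution about your first paragraph: the relation $g=4(e+f)+3(b+c-1)$ determines $e+f$ only modulo $3$ and $(b,c)$ only through its sum, so nothing is ``forced'' arithmetically --- $(0,2)$ competes with $(1,1)$, $(0,3)$ and $(1,2)$ compete with $(0,0)$, and unbalanced splittings of $b+c$ occur as well; you partly walk this back in your final paragraph, but the burden you place on \cite{dC-G} is correspondingly heavier than ruling out ``several $(e,f)$ with the same $e+f$''. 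In exchange, your citation route inherits the finer information that all other strata are strictly smaller or empty, which the paper's dimension count neither needs nor provides.
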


\begin{proof}
Here let us give a self-contained argument. 
It is sufficient to check that the above quotients have dimension $2g+3$, 
and this follows from the formulae
\begin{equation*}
{\dim}{\aut}(X_{0,0}) = {\dim}{\aut}(X_{0,1}) = {\dim}{\aut}(X_{1,1}) = 11, 
\end{equation*}
\begin{equation*}
h^0(L_{2,b}) = 4(e+f) + 6(b+1), 
\end{equation*}
from \S \ref{sec:scroll}.  
\end{proof}

In \S \ref{sec:P1P2}--\S \ref{sec:quadric cone}, 
we use these descriptions of $\mathcal{T}_g$ to prove Theorem \ref{main}. 
In order to have ${\aut}(X_{e,f})$-linearizations on some vector bundles, 
we were forced to assume $b$ to be even in cases (1), (3), (5), and odd in cases (0), (2), (4). 
This caused the further mod 12 classification in Theorem \ref{main}.


\section{Supplementary techniques for rationality}\label{sec:technique for rationality}

In this section we collect some techniques for proving rationality of quotient varieties 
that supplement the basic ones as in \cite{Bo} and that will be used repeatedly in the rest of the article. 
We encourage the reader to skip for the moment and return when necessary. 
Most of this section is more or less standard, but for the convenience of the reader we sketched some proof.

\subsection{Quotients of Grassmannians}\label{ssec:Grass quot}

Let $G$ be an algebraic group and $V$ be a $G$-representation. 
We denote by $G_0\subset G$ the subgroup of elements which act on $V$ by scalar multiplication, 
and set $\bar{G}=G/G_0$. 
Let $G(a, V)=\mathbb{G}(a-1, {\proj}V)$ be the Grassmannian of $a$-dimensional linear subspaces in $V$. 
As shown in Proposition \ref{tetra loci}, we will be interested in the problem 
whether the quotient $G(a, V)/G$ is rational, or at least stably rational of small level. 
First notice that we have a natural birational identification 
\begin{equation}\label{eqn:Grass linear}
G(a, V)/G \sim {\hom}({\C}^a, V)/{\GL}_a\times G = (({\C}^a)^{\vee}\boxtimes V)/{\GL}_a\times G, 
\end{equation}
so that the problem could be reduced to the case of linear quotients. 

To prove stable rationality of $G(a, V)/G$, 
it is useful to consider the universal subbundle $\mathcal{E}\to G(a, V)$ of rank $a$, 
which is $G$-linearized. 
Its projectivization is viewed as the correspondence 
\begin{equation*}
{\proj}\mathcal{E} = \{ (P, [v])\in G(a, V)\times{\proj}V \; |\; {\C}v\subset P \}
\end{equation*}
between $G(a, V)$ and ${\proj}V$. 
The second projection ${\proj}\mathcal{E}\to{\proj}V$ is identified with 
the relative Grassmannian $G(a-1, \mathcal{F})$ over ${\proj}V$, 
where $\mathcal{F}\to{\proj}V$ is the universal quotient bundle of rank ${\dim}V-1$. 

\begin{lemma}\label{Grass correspo}
Suppose that $\bar{G}$ acts on $G(a, V)$ almost freely and that 
$G_0$ acts on $\mathcal{E}\otimes({\det}\mathcal{E})^d$ trivially for some $d\in{\Z}$. 

\noindent
(1) If furthermore $\bar{G}$ acts on ${\proj}V$ almost freely, then 
\begin{equation*}
{\proj}^{a-1}\times(G(a, V)/G) \sim G(a-1, {\dim}V-1)\times({\proj}V/G). 
\end{equation*}
(2) If $G$ acts on ${\proj}V$ almost transitively with $H\subset G$ the stabilizer of a general point $[v]\in{\proj}V$, then 
\begin{equation*}
{\proj}^{a-1}\times(G(a, V)/G) \sim G(a-1, V/{\C}v)/H. 
\end{equation*}
\end{lemma}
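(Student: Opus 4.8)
The plan is to use the correspondence variety $\proj\mathcal{E}$ as a bridge between the two sides of each identity, exploiting that $\proj\mathcal{E}$ fibers over $G(a,V)$ with fiber $\proj^{a-1}$ and also fibers over $\proj V$ via the relative Grassmannian $G(a-1,\mathcal{F})$. The key observation that makes the $\proj^{a-1}$ factor appear is that the hypothesis ``$G_0$ acts on $\mathcal{E}\otimes(\det\mathcal{E})^d$ trivially'' lets us descend the projective bundle $\proj\mathcal{E}\to G(a,V)$ to a genuine $\bar{G}$-linearized (indeed, trivializable-after-quotient) $\proj^{a-1}$-bundle on the quotient. The strategy for both parts is therefore: compute $\proj\mathcal{E}/G$ in two ways, once via the first projection and once via the second, and equate the results.

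For part (1), I would first analyze the map $\proj\mathcal{E}/G \to G(a,V)/G$. Since $\bar{G}$ acts almost freely on $G(a,V)$ and $\proj\mathcal{E}$ is a $\proj^{a-1}$-bundle, the quotient $\proj\mathcal{E}/G$ is birationally a $\proj^{a-1}$-bundle over $G(a,V)/G$. The linearization hypothesis on $\mathcal{E}\otimes(\det\mathcal{E})^d$ is exactly what is needed to invoke the no-name lemma (see \cite{Bo}): the $\proj^{a-1}$-bundle becomes Zariski-locally trivial after passing to the quotient, so
\begin{equation*}
\proj\mathcal{E}/G \sim \proj^{a-1}\times(G(a,V)/G).
\end{equation*}
On the other hand, using the second projection $\proj\mathcal{E}\to\proj V$, which realizes $\proj\mathcal{E}$ as the relative Grassmannian $G(a-1,\mathcal{F})$ over $\proj V$, and using that $\bar{G}$ acts almost freely on $\proj V$, the same no-name principle applied to the $\bar{G}$-linearized bundle $\mathcal{F}$ gives that $G(a-1,\mathcal{F})/G$ is birationally a product: the fiber Grassmannian $G(a-1,\dim V-1)$ is rigid, so after quotienting one obtains $G(a-1,\dim V-1)\times(\proj V/G)$. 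Equating the two computations of $\proj\mathcal{E}/G$ yields the claim.

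For part (2), the first projection is handled identically, giving again $\proj\mathcal{E}/G\sim\proj^{a-1}\times(G(a,V)/G)$. The difference is in the second projection: now $G$ acts almost transitively on $\proj V$, so by the standard slice/section argument the quotient of any $G$-space fibered over $\proj V$ reduces to the quotient by the stabilizer $H$ of a general point $[v]$ of the fiber over $[v]$. The fiber of $\proj\mathcal{E}\to\proj V$ over $[v]$ is $G(a-1,\mathcal{F}_{[v]})=G(a-1, V/\C v)$, with its natural $H$-action, so
\begin{equation*}
\proj\mathcal{E}/G \sim G(a-1, V/\C v)/H.
\end{equation*}
Combining with the first computation gives the desired formula.

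The main obstacle I anticipate is justifying the two applications of the no-name lemma cleanly, i.e. verifying that the almost-free actions plus the triviality of $G_0$ on $\mathcal{E}\otimes(\det\mathcal{E})^d$ really do make the relevant projective bundles birationally trivial over the quotients. The subtle point is bookkeeping with the scalar subgroup $G_0$: the bundle $\mathcal{E}$ itself carries a nontrivial $G_0$-action (scalars act on $V$), which is why one must twist by $(\det\mathcal{E})^d$ to kill the central character before descending; I would need to track exactly how this twist interacts with the projectivization (where scalars drop out anyway) versus the relative-Grassmannian description. The almost-transitivity slice argument in part (2) is routine, so I expect the genuine work to lie entirely in the equivariant descent of the two bundles.
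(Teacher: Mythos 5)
Your proposal is correct and follows essentially the same route as the paper: compute $\proj\mathcal{E}/G$ in two ways, using the no-name lemma for $\mathcal{E}\otimes(\det\mathcal{E})^d$ over $G(a,V)$ on one side, and the no-name lemma (for part (1)) or the slice method (for part (2)) for the relative Grassmannian $G(a-1,\mathcal{F})\to\proj V$ on the other. The one bookkeeping point you flagged but left open is resolved in the paper by noting that, while $\mathcal{F}$ itself is not $\bar{G}$-linearized, the twist $\mathcal{F}\otimes\mathcal{O}_{{\proj}V}(1)$ always is, and $G(a-1,\mathcal{F})=G(a-1,\mathcal{F}\otimes\mathcal{O}_{{\proj}V}(1))$.
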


\begin{proof}
Note that we have a canonical identification ${\proj}\mathcal{E}={\proj}(\mathcal{E}\otimes({\det}\mathcal{E})^d)$. 
Using the no-name lemma for the bundle $\mathcal{E}\otimes({\det}\mathcal{E})^d$ which is $\bar{G}$-linearized, 
we obtain ${\proj}\mathcal{E}/G\sim{\proj}^{a-1}\times(G(a, V)/G)$. 
On the other hand, the bundle $\mathcal{F}\otimes{\sheaf}_{{\proj}V}(1)$ over ${\proj}V$ is always $\bar{G}$-linearized, 
and we have $G(a-1, \mathcal{F})=G(a-1, \mathcal{F}\otimes{\sheaf}_{{\proj}V}(1))$. 
Now (1) is a consequence of the no-name lemma applied to $\mathcal{F}\otimes{\sheaf}_{{\proj}V}(1)$, 
while (2) follows from the slice method for the projection $G(a-1, \mathcal{F})\to{\proj}V$. 
\end{proof}

Next consider the situation where we have a surjective $G$-homomorphism $f:V\to W$ to another $G$-representation $W$. 
Notice that we are not assuming $V$ to be completely reducible. 
We have a natural dominant map 
\begin{equation}\label{eqn:Grass map}
G(a, V)\dashrightarrow G(a, W), 
\end{equation}
whose fiber over $P\in G(a, W)$ is an open set of $G(a, f^{-1}(P))$. 
Let $\mathcal{G}\to G(a, W)$ be the universal subbundle for $G(a, W)$, 
and $\mathcal{H}\to G(a, W)$ be the vector bundle obtained as the inverse image of $\mathcal{G}$ 
by the bundle homomorphism $\underline{f}:\underline{V}\to\underline{W}$ over $G(a, W)$. 
Then \eqref{eqn:Grass map} induces a $G$-equivariant birational map 
\begin{equation*}
G(a, V)\dashrightarrow G(a, \mathcal{H})
\end{equation*}
to the relative Grassmannian $G(a, \mathcal{H})$. 
As in the proof of Lemma \ref{Grass correspo}, 
we obtain by the no-name lemma the following. 

\begin{lemma}\label{Grass no-name}
Suppose that $\bar{G}$ acts on $G(a, W)$ almost freely and 
that $G_0$ acts on $\mathcal{H}\otimes({\det}\mathcal{G})^d$ trivially for some $d\in{\Z}$. 
Then, setting $n_0={\dim}V-{\dim}W$, we have  
\begin{equation*}
G(a, V)/G \sim G(a, n_0+a)\times(G(a, W)/G). 
\end{equation*}
\end{lemma}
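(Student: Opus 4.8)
The plan is to run the no-name lemma on the relative Grassmannian $G(a,\mathcal{H})\to G(a,W)$, exactly as in the proof of Lemma~\ref{Grass correspo}. First I would record the rank of $\mathcal{H}$: its fiber over a point $P\in G(a,W)$ is $f^{-1}(P)$, of dimension $\dim\ker f+a=n_0+a$, so $\mathcal{H}$ has rank $n_0+a$ and the relative Grassmannian $G(a,\mathcal{H})$ has fibers isomorphic to $G(a,n_0+a)$. The $G$-equivariant birational map $G(a,V)\dashrightarrow G(a,\mathcal{H})$ constructed just before the statement then yields $G(a,V)/G\sim G(a,\mathcal{H})/G$, so it remains to trivialize the Grassmann bundle birationally over $G(a,W)$.

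The key step is to pass to a genuinely $\bar{G}$-linearized bundle. Twisting by a line bundle does not change the associated Grassmann bundle, so there is a canonical identification $G(a,\mathcal{H})=G(a,\mathcal{H}\otimes(\det\mathcal{G})^{d})$ over $G(a,W)$, just as $G(a-1,\mathcal{F})=G(a-1,\mathcal{F}\otimes\sheaf_{\proj V}(1))$ was used in Lemma~\ref{Grass correspo}. By hypothesis $G_0$ acts trivially on $\mathcal{H}':=\mathcal{H}\otimes(\det\mathcal{G})^{d}$, so the $G$-action on $\mathcal{H}'$ factors through $\bar{G}$ and $\mathcal{H}'$ is $\bar{G}$-linearized. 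Since $f$ is a surjective $G$-homomorphism, $G_0$ acts on $W$ by the same scalars and hence trivially on $G(a,W)$; thus the $G$- and $\bar{G}$-quotients of the base agree, and since $G_0$ also acts trivially on the fibers of $\mathcal{H}'$ it acts trivially on the total space $G(a,\mathcal{H}')$, giving $G(a,\mathcal{H})/G=G(a,\mathcal{H}')/\bar{G}$. This is precisely the reduction that makes the no-name lemma available.

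Finally I would invoke the no-name lemma in its relative-Grassmannian form. Because $\bar{G}$ acts almost freely on $Y=G(a,W)$ and $\mathcal{H}'$ is $\bar{G}$-linearized of rank $n_0+a$, the no-name lemma makes $\mathcal{H}'$ $\bar{G}$-equivariantly birational to the trivial bundle $Y\times\C^{\,n_0+a}$ carrying the trivial fiber action. Passing to relative Grassmannians gives a $\bar{G}$-equivariant birational identification $G(a,\mathcal{H}')\sim Y\times G(a,n_0+a)$ with $\bar{G}$ acting trivially on the second factor. Dividing by $\bar{G}$ and using $G(a,W)/\bar{G}=G(a,W)/G$ then yields
\begin{equation*}
G(a,\mathcal{H}')/\bar{G}\sim (G(a,W)/G)\times G(a,n_0+a),
\end{equation*}
which, combined with the chain of identifications above, completes the proof.

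The main obstacle, such as it is, is bookkeeping rather than a deep difficulty: one must check that the twisting hypothesis genuinely upgrades the $G$-linearization on $\mathcal{H}$ (where $G_0$ may act by a nontrivial scalar) to a $\bar{G}$-linearization on $\mathcal{H}'$, and confirm that the relative Grassmannian is insensitive both to this line-bundle twist and to the residual $G_0$-action on the base. Once this descent to $\bar{G}$ is in place, the almost-freeness assumption lets the no-name lemma finish the argument mechanically.
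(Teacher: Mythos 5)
Your proposal is correct and follows exactly the route the paper takes: the paper's proof is literally the one-line remark "As in the proof of Lemma \ref{Grass correspo}, we obtain by the no-name lemma the following," i.e., pass to the relative Grassmannian $G(a,\mathcal{H})$ via the equivariant birational map, identify it with $G(a,\mathcal{H}\otimes(\det\mathcal{G})^d)$ to obtain a $\bar{G}$-linearized bundle, and apply the no-name lemma over the almost-free base $G(a,W)$. Your write-up simply makes explicit the bookkeeping (rank of $\mathcal{H}$, descent of the action to $\bar{G}$) that the paper leaves implicit.
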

 
This can be seen as a Grassmannian version of the no-name method. 
  
In the above lemmas, we are required to check almost-freeness of an action on a Grassmannian. 
In many cases it follows from the following observation. 
(This can also be found essentially in the proof of Proposition 1.3.2.10 in \cite{Bo}.)  

\begin{lemma}\label{Grass almost-free} 
Let an algebraic group $G$ act on a projective space ${\proj}^n$ almost freely. 
If $a<n-{\dim}G$, then $G$ acts on $\mathbb{G}(a, {\proj}^n)$ almost freely. 
\end{lemma}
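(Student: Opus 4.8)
The plan is to reduce the statement to the assertion that a general $a$-plane $\Lambda\in\mathbb{G}(a,\proj^n)$ has trivial stabilizer, and then to extract this triviality from the almost-freeness on $\proj^n$ by a spanning argument. The guiding observation is elementary: if $p_0\in\Lambda$ and $g\in G$ satisfies $g\Lambda=\Lambda$, then $gp_0\in\Lambda$, so $gp_0$ lies in $\Lambda\cap Gp_0$; hence, once we arrange that $\Lambda$ meets the orbit $Gp_0$ only in $p_0$, we get $gp_0=p_0$, and if moreover $\mathrm{Stab}_G(p_0)=\{1\}$ we conclude $g=1$.

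First I would fix a general point $p_0\in\proj^n$; since $G$ acts almost freely on $\proj^n$ we may take $\mathrm{Stab}_G(p_0)=\{1\}$, so that the orbit $Gp_0$ has dimension $\dim G$. The $a$-planes through $p_0$ are in bijection with the $(a-1)$-planes in $\proj^{n-1}$ via the projection $\rho\colon\proj^n\dashrightarrow\proj^{n-1}$ from $p_0$: a point $q\ne p_0$ lies on such a plane $\Lambda$ exactly when $\rho(q)$ lies on the corresponding $(a-1)$-plane $\bar\Lambda$. Setting $Y=\overline{\rho(Gp_0\setminus\{p_0\})}\subseteq\proj^{n-1}$, a closed subvariety of dimension at most $\dim G$, the condition $\Lambda\cap(Gp_0\setminus\{p_0\})=\emptyset$ is implied by $\bar\Lambda\cap Y=\emptyset$. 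Here the hypothesis enters decisively: since $a<n-\dim G$ forces $\dim Y\le\dim G\le n-a-1$, the expected dimension $\dim Y+(a-1)-(n-1)$ of $\bar\Lambda\cap Y$ is at most $-1$, so a general $(a-1)$-plane $\bar\Lambda$ avoids $Y$. Thus a general $a$-plane $\Lambda$ through $p_0$ meets $Gp_0$ only in $p_0$, whence $\mathrm{Stab}_G(\Lambda)=\{1\}$ by the observation above.

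Finally I would upgrade this from planes through a general point to a general plane of $\mathbb{G}(a,\proj^n)$. The clean way is to run the argument on the incidence variety $\mathcal{I}=\{(p,\Lambda):p\in\Lambda\}$, which is irreducible and surjects onto both $\proj^n$ and $\mathbb{G}(a,\proj^n)$: the locus where $p$ avoids the proper closed set $Z\subset\proj^n$ of points with nontrivial stabilizer and where $\Lambda\cap Gp=\{p\}$ is open and, by the previous paragraph, nonempty, hence dense in $\mathcal{I}$. Its image under the projection $\mathcal{I}\to\mathbb{G}(a,\proj^n)$ therefore contains a general $\Lambda$, and every such $\Lambda$ has trivial stabilizer; this is exactly almost-freeness of the $G$-action on $\mathbb{G}(a,\proj^n)$.

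The main obstacle, and the only place requiring care, is the dimension count controlling $\Lambda\cap Gp_0$: one must pass to the orbit closure (equivalently to the closed image $Y$) in order to invoke genericity of intersection with linear subspaces, while noting that the elementary stabilizer argument uses only the orbit $Gp_0$ itself, so no spurious points from the boundary $\overline{Gp_0}\setminus Gp_0$ interfere. The passage from ``general plane through a general point'' to ``general plane'' via the incidence correspondence is the second point to state carefully, but it is routine once $\mathcal{I}$ is seen to be irreducible.
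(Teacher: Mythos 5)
Your proof is correct and follows essentially the same route as the paper's: project from a general point $p_0$, use the hypothesis $\dim G < n-a$ to show a general $(a-1)$-plane in ${\proj}^{n-1}$ misses the (closure of the) image of the orbit $Gp_0$, conclude that a general $a$-plane through $p_0$ meets $Gp_0$ only in $p_0$, and deduce triviality of the stabilizer from almost-freeness at $p_0$. The only difference is that you explicitly justify, via the incidence variety, the reduction from a general plane to a general plane through a general point, a step the paper simply asserts with ``it suffices to show.''
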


\begin{proof}
Let $p\in{\proj}^n$ be a general point. 
It suffices to show that a general $a$-plane $P\subset{\proj}^n$ through $p$ is not stabilized by any element of $G$. 
Consider the projection $\pi:{\proj}^n\dashrightarrow{\proj}^{n-1}$ from $p$. 
Since $\pi(G\cdot p\backslash p)\subset{\proj}^{n-1}$ has dimension $\leq{\dim}G$, 
a general $(a-1)$-plane $P'\subset{\proj}^{n-1}$ is disjoint from $\pi(G\cdot p\backslash p)$ 
by our assumption $a<n-{\dim}G$. 
This means that $P\cap(G\cdot p)=\{ p\}$. 
Now if $g\in G$ stabilizes $P$, we would have 
$g(p) = g(P\cap(G\cdot p)) =P\cap(G\cdot p) = p$, 
so that $g={\rm id}$. 
\end{proof}

\subsection{Representations of product groups}\label{ssec:prod grp} 

We can utilize quotients of Grassmannians for the rationality problem for representations of product groups 
(see \cite{Ma2} for more detail). 
Let $G, H$ be algebraic groups and $V, W$ be representations of $G, H$ respectively. 
Then $V\boxtimes W$ is a representation of $G\times H$. 
We assume that ${\dim}V<{\dim}W$. 
Identifying $V\boxtimes W$ with ${\hom}(V^{\vee}, W)$, we consider the map 
\begin{equation}\label{eqn:map to Grass}
{\hom}(V^{\vee}, W)\dashrightarrow G({\dim}V, W)
\end{equation} 
that sends a homomorphism to its image. 
Let $\mathcal{E}\to G({\dim}V, W)$ be the universal subbundle. 
Then \eqref{eqn:map to Grass} induces a birational map 
\begin{equation}\label{eqn:Grass method for boxtensor}
V\boxtimes W \dashrightarrow V\otimes\mathcal{E}
\end{equation}
to the vector bundle $V\otimes\mathcal{E}={\hom}(V^{\vee}, \mathcal{E})$ over $G({\dim}V, W)$. 
Here $H$ acts on $\mathcal{E}$ equivariantly and $G$ acts on $V$ fiberwisely. 
As in \S \ref{ssec:Grass quot}, let $H_0={\ker}(H\to{\PGL}(W))$ and $\bar{H}=H/H_0$. 
By the no-name method we then obtain 

\begin{lemma}[\cite{Ma2}]\label{Grass method for boxtensor}
Suppose that $\bar{H}$ acts on $G({\dim}V, W)$ almost freely and that 
$H_0$ acts on $\mathcal{E}\otimes({\det}\mathcal{E})^d$ trivially for some $d\in{\Z}$. 
Then 
\begin{equation*}
{\proj}(V\boxtimes W)/G\times H \sim ({\proj}(V^{\oplus{\dim}V})/G) \times (G({\dim}V, W)/H). 
\end{equation*}
\end{lemma}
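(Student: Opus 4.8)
The plan is to establish the birational equivalence by combining the reduction to Grassmannians in \eqref{eqn:Grass method for boxtensor} with the no-name method, exactly parallel to the proofs of Lemmas \ref{Grass correspo} and \ref{Grass no-name}. First I would recall the setup: since ${\dim}V<{\dim}W$, a general homomorphism $V^{\vee}\to W$ is injective, so the rational map \eqref{eqn:map to Grass} sending a homomorphism to its image is well-defined and dominant, with fiber over a point $P\in G({\dim}V, W)$ equal to an open subset of the isomorphisms $V^{\vee}\to P$. This identifies $V\boxtimes W$ birationally with the total space of the vector bundle $V\otimes\mathcal{E}={\hom}(V^{\vee},\mathcal{E})$ over the Grassmannian, as in \eqref{eqn:Grass method for boxtensor}, in a $G\times H$-equivariant way: $H$ acts on the base $G({\dim}V, W)$ and on the fibers of $\mathcal{E}$ through its action on $W$, while $G$ acts only fiberwise through $V$.

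Next I would pass to projectivizations and handle the two group factors in turn. The key point is that the fiber bundle $V\otimes\mathcal{E}$ carries commuting actions of $G$ (fiberwise, linear) and of $H$ (covering the base action). To apply the no-name lemma for $H$, one needs an $\bar H$-linearized bundle, and this is where the hypotheses enter: by assumption $\bar H$ acts almost freely on $G({\dim}V, W)$, and $H_0$ acts on $\mathcal{E}\otimes({\det}\mathcal{E})^d$ trivially for some $d$, so using the canonical identification ${\proj}(V\otimes\mathcal{E})={\proj}(V\otimes\mathcal{E}\otimes({\det}\mathcal{E})^d)$ the relevant bundle becomes genuinely $\bar H$-linearized. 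The no-name method then trivializes the $H$-action over the base, producing a birational equivalence with a product of ${\proj}\bigl(V\otimes\mathbb{C}^{{\dim}V}\bigr)=\mathbb{P}\bigl(V^{\oplus{\dim}V}\bigr)$ (carrying the residual fiberwise $G$-action) and the quotient $G({\dim}V, W)/H$ (which carries no $G$-action). Taking the quotient by the commuting $G$-action and then by $H$ yields the stated decomposition
\begin{equation*}
{\proj}(V\boxtimes W)/G\times H \sim ({\proj}(V^{\oplus{\dim}V})/G) \times (G({\dim}V, W)/H).
\end{equation*}

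The step I expect to be the main obstacle is the bookkeeping around the scalar actions and the passage between the affine bundle $V\otimes\mathcal{E}$ and its projectivization. One must be careful that the overall projectivization ${\proj}(V\boxtimes W)$, rather than the affine space $V\boxtimes W$, is what appears, and that the single scalar $\mathbb{C}^{\times}$ being quotiented out is compatible on both sides of the equivalence; this is precisely why the hypothesis is phrased in terms of $H_0$ acting trivially on a twist $\mathcal{E}\otimes({\det}\mathcal{E})^d$ rather than on $\mathcal{E}$ itself. Once the correct twist is chosen so that the bundle descends to an honest $\bar H$-linearized bundle, the no-name lemma applies verbatim and the remaining identifications are formal. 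Since this is the content of \cite{Ma2}, I would keep the argument brief, indicating that it follows the pattern of Lemma \ref{Grass no-name} with the fiber direction $V$ carrying the extra group $G$.
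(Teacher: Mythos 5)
Your proposal is correct and follows essentially the same route as the paper: the paper's own argument consists of the identification $V\boxtimes W \dashrightarrow V\otimes\mathcal{E}$ over $G({\dim}V, W)$ set up in \eqref{eqn:Grass method for boxtensor}, followed by an application of the no-name method, with the twist by $({\det}\mathcal{E})^d$ serving exactly the role you describe of producing an honest $\bar{H}$-linearization. Your additional remarks on the scalar bookkeeping and on the $G$-equivariance of the fiberwise trivialization are precisely the details left implicit in the paper (and carried out in \cite{Ma2}).
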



\section{The case of ${\proj}^1\times{\proj}^2$}\label{sec:P1P2}

We begin the proof of Theorem \ref{main} with the cases $g\equiv6, 9\; (12)$, 
where the basic ${\proj}^2$-bundle is ${\proj}^1\times{\proj}^2$. 
We shall use the standard notation ${\OPP}(b, a)$ for line bundles on ${\proj}^1\times{\proj}^2$, rather than $L_{a,b}$ in \S \ref{sec:scroll}. 
The automorphism group of ${\proj}^1\times{\proj}^2$ is ${\PGL}_2\times{\PGL}_3$. 
It is useful to consider also  ${\SL}_2\times{\SL}_3$, because any line bundle ${\OPP}(b, a)$ is ${\SL}_2\times{\SL}_3$-linearized. 
The natural projection ${\SL}_2\times{\SL}_3\to{\PGL}_2\times{\PGL}_3$ has kernel ${\Z}/6$ generated by 
\begin{equation*}\label{eqn:scalar SL2SL3}
\zeta = (-1, e^{2\pi i/3}) \in {\SL}_2\times{\SL}_3. 
\end{equation*}
This element acts on ${\OPP}(b, a)$ by multiplication by $(e^{\pi i/3})^{3b-2a}$. 

In the sequel of this section let us use the abbreviation $W_a$ for the ${\SL}_3$-representation $H^0({\Oplane}(a))$. 
Thus we have $H^0({\OPP}(b, a)) \simeq V_b\boxtimes W_a$ as an ${\SL}_2\times{\SL}_3$-representation.

\subsection{The case $g\equiv6\; (12)$}\label{ssec:g6}

Let $b\geq3$ be an odd number. 
Let $\mathcal{L}\to{\proj}(V_b\boxtimes W_2)$ be the tautological bundle, and 
$\mathcal{E}\to{\proj}(V_b\boxtimes W_2)$ be the bundle 
$\underline{V_{b+1}\boxtimes W_2}/\mathcal{L}\otimes V_1$ as defined in Proposition \ref{tetra loci}. 
$\mathcal{L}$ and $\mathcal{E}$ are ${\SL}_2\times{\SL}_3$-linearized 
where $\zeta$ acts by $(e^{\pi i/3})^{3b-4}$ and $(e^{\pi i/3})^{3b-1}$ respectively. 
Recall that by Proposition \ref{tetra loci}, ${\proj}\mathcal{E}/{\SL}_2\times{\SL}_3$ is birational to the tetragonal locus of genus $6b\equiv6\;(12)$. 

\begin{lemma}\label{almost-free g6}
The group ${\PGL}_2\times{\PGL}_3$ acts on ${\proj}(V_b\boxtimes W_2)$ almost freely. 
\end{lemma}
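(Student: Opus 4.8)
The plan is to show that a general point of $\proj(V_b \boxtimes W_2)$ has trivial stabilizer in $\PGL_2 \times \PGL_3$. A point of this projective space is the class of a nonzero element $F \in V_b \boxtimes W_2 = H^0(\OPP(b,2))$, which we think of as a bidegree $(b,2)$ form on $\proj^1 \times \proj^2$, i.e. a family of conics in $\proj^2$ parametrized by $\proj^1$, with the conic coefficients varying as degree-$b$ binary forms. Since $\zeta$ acts by a scalar, it suffices to prove that a general such $F$ is fixed (up to scalar) by no nontrivial element of $\SL_2 \times \SL_3$ modulo the kernel $\Z/6$; equivalently, that the generic stabilizer in $\PGLPGL$ is trivial.

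First I would set up the standard dimension/almost-transitivity bookkeeping. The group $\PGLPGL$ has dimension $8$ and acts on $\proj(V_b \boxtimes W_2)$, whose dimension is $6(b+1)-1 = 6b+5$, which is large (at least $23$ for $b \geq 3$). So there is ample room dimensionally, but a dimension count alone does not rule out a positive-dimensional generic stabilizer. The cleanest route is to exhibit one explicit $F_0$ with trivial stabilizer, since the stabilizer of a general point is contained (up to conjugacy) in the stabilizer of any point it degenerates to, and triviality is an open condition on the orbit space; concretely, the locus where the stabilizer is positive-dimensional is closed, so producing a single $F_0$ with finite, indeed trivial, stabilizer forces almost-freeness. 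Here I would pick coordinates $[s:t]$ on $\proj^1$ and $[x_0:x_1:x_2]$ on $\proj^2$ and write down a sufficiently generic combination, for instance a form whose conic fiber $Q_{[s:t]}$ varies enough that the three ``degenerate conic'' points on $\proj^1$ (where $\det Q = 0$, a degree-$3b$ condition in $[s:t]$) together with the singular-point data pin down the $\PGL_2$ and $\PGL_3$ factors rigidly.

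The key steps, in order, are: (i) decompose the stabilizer problem using the exact sequence structure — an element $(\varphi, \psi) \in \PGLPGL$ fixing $[F]$ must permute the finite set $Z \subset \proj^1$ of parameters where the conic $Q_p$ is singular, and for generic $F$ this set has $3b \geq 9$ distinct points with no symmetry, forcing $\varphi = \mathrm{id}$; (ii) with $\varphi = \mathrm{id}$ fixed, the element $\psi \in \PGL_3$ must simultaneously fix every conic $Q_p$ in the pencil-like family, and a generic family of conics has no common projective automorphism, forcing $\psi = \mathrm{id}$. I expect \textbf{the main obstacle} to be step (ii): ruling out a nontrivial $\psi \in \PGL_3$ that preserves the entire varying family of conics. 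A single smooth conic has a $3$-dimensional automorphism group ($\PGL_2$ acting on it), so one must genuinely use that several conics $Q_{p_1}, \dots, Q_{p_k}$ in general position have no common automorphism; the safest argument is to note that two generic conics already cut out four distinct points whose cross-ratio and configuration are rigid, so their common stabilizer in $\PGL_3$ is finite, and a third generic conic kills the residual finite part. Making ``generic'' precise — choosing $F_0$ so that these transversality and rigidity conditions all hold, and verifying them is a nonempty open condition — is the heart of the proof; everything else is the routine observation that almost-freeness is the complement of a proper closed subvariety.
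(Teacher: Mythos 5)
Your verification steps (i)--(ii) are a genuinely different route from the paper's proof, and in substance they can be made to work. The paper argues by citing structure theory: a general member $S\in|\OPP(b,2)|$ is the blow-up of $\proj^2$ at $3b+1\geq 10$ points in general position (\cite{dC-G}), such a surface has no nontrivial automorphisms by Koitabashi's theorem (\cite{Ko}), so any $g\in\PGLPGL$ stabilizing $S$ restricts to the identity on $S$ and hence is the identity on $\proj^1\times\proj^2$. Your discriminant-plus-common-automorphism analysis avoids both citations; carried out for a \emph{general} $F$ it is fine, with two details to supply: the discriminant map is dominant onto degree-$3b$ divisors on $\proj^1$ (diagonal families $f_1x_0^2+f_2x_1^2+f_3x_2^2$ already realize every product $f_1f_2f_3$, i.e.\ every configuration), so the $3b\geq 9$ discriminant points of a general $F$ indeed have no M\"obius symmetry; and the common stabilizer in $\PGL_3$ of two generic conics is not merely finite but is the Klein four-group of double transpositions of their four base points (which preserves \emph{every} conic of the pencil), so the third generic member of the family is genuinely needed, as you say.

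The genuine gap is the reduction you call ``the cleanest route'': exhibiting a single $F_0$ with trivial stabilizer does \emph{not} force almost-freeness. The principle you invoke --- that the stabilizer of a general point is contained, up to conjugacy, in the stabilizer of any specialization, equivalently that the trivial-stabilizer locus is open --- fails for actions of non-proper groups such as $\PGLPGL$, because stabilizer elements can escape to infinity in the group as the point specializes. What upper semicontinuity of $\dim\mathrm{Stab}$ actually gives is only: one point with finite stabilizer implies the \emph{generic} stabilizer is finite, which is not enough (the no-name lemma applied later needs the generic stabilizer to be trivial). Concretely, for $\PGL_2$ acting on $\proj(V_4)$ the generic stabilizer is the Klein four-group, yet the specialized quartic with divisor $2[0]+[\infty]+[-1]$ has stabilizer of order two --- so the generic stabilizer is not contained in any conjugate of the special one; and one can rig irreducible families (pairs of quartics sharing an involution, specialized so that the shared involution degenerates in $\PGL_2$) whose generic stabilizer is $\Z/2$ but which contain points with trivial stabilizer. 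So ``one good point'' proves nothing about the general point. The repair is to phrase what you actually do in (i)--(ii) as a statement about a dense open set: the conditions ``the discriminant configuration has no M\"obius symmetry'' and ``three members of the conic family have no common projective automorphism'' hold on a nonempty open subset of $\proj(V_b\boxtimes W_2)$, and every point of that subset has trivial stabilizer; then delete the specialization argument entirely.
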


\begin{proof}
It is known that a general member $S$ of $|{\OPP}(b, 2)|$ is the blow-up of ${\proj}^2$ at $3b+1$ points in general position 
(for example, put $s=3b$ and $n=s-1$ in \cite{dC-G} \S 2). 
Since $3b+1\geq10$, $S$ has no nontrivial automorphism (see \cite{Ko}). 
If $g\in{\PGL}_2\times{\PGL}_3$ acts trivially on $S$, so it does on ${\proj}^1\times{\proj}^2$. 
\end{proof}


\begin{proposition}\label{rational g6}
The quotient ${\proj}\mathcal{E}/{\SL}_2\times{\SL}_3$ is rational. 
\end{proposition}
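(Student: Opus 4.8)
The plan is to first strip off the projective bundle and reduce to a quotient of the base $\proj(V_b\boxtimes W_2)$, and then attack the latter by the product-group Grassmannian method of \S\ref{ssec:prod grp}.

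\emph{Step 1: peeling the bundle.} Since $\zeta$ acts on $V_b\boxtimes W_2$ by a scalar it acts trivially on the base $\proj(V_b\boxtimes W_2)$, and as recorded above it acts on $\mathcal{E}$ by the scalar $(e^{\pi i/3})^{3b-1}$, hence trivially on the fibres of $\proj\mathcal{E}$. Thus the $\SL_2\times\SL_3$-action on $\proj\mathcal{E}$ factors through $\PGL_2\times\PGL_3$, which by Lemma~\ref{almost-free g6} acts almost freely on the base. To apply the no-name lemma through the identification $\proj\mathcal{E}=\proj(\mathcal{E}\otimes(\det\mathcal{E})^d)$, I must choose $d$ making the $\zeta$-weight of $\mathcal{E}\otimes(\det\mathcal{E})^d$ trivial. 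Since $\operatorname{rank}\mathcal{E}=6(b+2)-2=6b+10\equiv1\pmod 3$ while $3b-1\equiv2\pmod 6$, the congruence $(3b-1)(1+d\operatorname{rank}\mathcal{E})\equiv0\pmod 6$ is solvable (e.g.\ $d=-1$), so the twisted bundle is $\PGL_2\times\PGL_3$-linearised and the no-name lemma gives
\[
\proj\mathcal{E}/\SL_2\times\SL_3 \sim \proj^{6b+9}\times\big(\proj(V_b\boxtimes W_2)/\PGL_2\times\PGL_3\big).
\]
It therefore suffices to prove that the base quotient is rational.

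\emph{Step 2: splitting the base quotient.} Here I would apply Lemma~\ref{Grass method for boxtensor} to $V_b\boxtimes W_2$. As $\dim W_2=6$ is fixed, for $b\geq 7$ one regards a $(b,2)$-form as a homomorphism $W_2^{\vee}\to V_b$ and sends it to its image in $G(6,V_b)$; the lemma then yields
\[
\proj(V_b\boxtimes W_2)/\SL_2\times\SL_3 \sim \big(\proj(W_2^{\oplus 6})/\SL_3\big)\times\big(G(6,V_b)/\SL_2\big).
\]
For $b=3$ one reverses the roles of the two factors, landing in $G(4,W_2)$. The value $b=5$, where $\dim V_5=\dim W_2=6$ so that the target Grassmannian degenerates to a point, escapes this scheme and will be treated directly: a general form is then an isomorphism $W_2^{\vee}\simeq V_5$, and one reduces the quotient to a double-coset problem for the two group actions on $\GL_6$.

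\emph{Step 3: rationality of the factors.} The Grassmannian factor $G(6,V_b)/\PGL_2=\mathbb{G}(5,\proj V_b)/\PGL_2$ I would handle by Lemma~\ref{Grass correspo}(1): modulo a rational Grassmannian $\mathbb{G}(4,\proj^{b-1})$ and a projective space it becomes birational to $\proj(V_b)/\PGL_2$, the classically rational moduli space of binary forms of degree $b$. The other factor $\proj(W_2^{\oplus 6})/\SL_3$ is a genus-independent quotient of six-tuples of conics; I would apply the slice method along the dense $\PGL_3$-orbit of smooth conics in $\proj W_2$, whose stabiliser is the image of $\PGL_2$ under $\operatorname{Sym}^2$. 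Because $W_2|_{\SL_2}\simeq V_4\oplus V_0$, this reduces the quotient to a generically free \emph{linear} $\SL_2$-quotient of $(V_4\oplus V_0)^{\oplus 5}$, which is rational since $\SL_2$ is a special group.

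The main obstacle is twofold. First, the boundary case $b=5$ genuinely falls outside the Grassmannian dichotomy and needs an independent rationality argument. Second, throughout Steps 2--3 one must verify the almost-freeness hypotheses on the relevant Grassmannians together with the $G_0$- and $H_0$-linearisation conditions; the numerical criterion of Lemma~\ref{Grass almost-free} just fails to apply to $G(6,V_b)$ at $b=7$ (it requires $5<b-3$), so that first nontrivial case must be checked by hand, and the parity bookkeeping forced by $b$ being odd must be tracked carefully to keep all the twisted bundles linearised.
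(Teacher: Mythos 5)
Your Step 1 is sound and is essentially the paper's own first step (the paper twists by $\mathcal{L}^2$ instead of $(\det\mathcal{E})^{-1}$; either choice kills the $\zeta$-weight because $b$ is odd). The gap is in Steps 2--3. In Step 2 you invoke Lemma \ref{Grass method for boxtensor} with $H=\SL_2$, $W=V_b$, so that $H_0=\{\pm1\}$ and $\mathcal{E}$ is the rank-$6$ universal subbundle on $G(6,V_b)$. Since $b$ is odd, $-1\in\SL_2$ acts on $V_b$, hence on $\mathcal{E}$, by $-1$, while it acts trivially on $\det\mathcal{E}$ because the rank $6$ is even; consequently $-1$ acts on $\mathcal{E}\otimes(\det\mathcal{E})^d$ by $-1$ for \emph{every} $d\in\Z$, and the hypothesis of the lemma can never be satisfied. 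The same parity failure invalidates your use of Lemma \ref{Grass correspo} (1) on $G(6,V_b)/\SL_2$ in Step 3 (again $G_0=\{\pm1\}$ acting by $-1$ on an even-rank subbundle). This is not bookkeeping that can be ``tracked carefully'': it is exactly the linearization obstruction responsible for the mod $12$ condition in Theorem \ref{main}, and the Remark in \S \ref{sssec:b46} (the cases $b=2,8$, i.e.\ the excluded genera $9$ and $45$) shows that when this hypothesis fails the author regards the method as genuinely blocked, since without a linearization the projective bundle need not descend to the quotient (a Brauer-type obstruction). Your fallback cases fare no better: for $b=3$ the group $\PGL_3$ acts on $G(4,W_2)$ almost \emph{transitively} with generic stabilizer $\frak{S}_4$, not almost freely, and $b=5$ is left as an unproved ``double-coset problem.''

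The paper's proof is designed around this obstruction: it never forms an even-rank Grassmannian inside an odd $V_b$. Instead it runs a transfer (double no-name) argument with the auxiliary space $\proj(V_5\boxtimes W_1)$: on the product $U=\proj(V_b\boxtimes W_2)\times\proj(V_5\boxtimes W_1)$, both projections are projective bundles, namely $\proj(V_5\boxtimes W_1\otimes\mathcal{L})$ and $\proj(V_b\boxtimes W_2\otimes\mathcal{L}')$, and $\zeta$ acts trivially on both twisted bundles (the weights $3b-4$ and $13$ sum to $3b+9\equiv0$ mod $6$ for $b$ odd), so both are $\PGL_2\times\PGL_3$-linearized. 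Two applications of the no-name lemma then trade stable rationality of $\proj(V_b\boxtimes W_2)/\PGL_2\times\PGL_3$ of the required level for stable rationality of the fixed, genus-independent quotient $\proj(V_5\boxtimes W_1)/\SL_2\times\SL_3\sim G(3,V_5)/\SL_2$; there the universal subbundle has \emph{odd} rank $3$, so $\mathcal{E}\otimes\det\mathcal{E}$ is linearized, Lemma \ref{Grass correspo} (1) applies, and one finishes with the rationality of the $2$-dimensional quotient $\proj V_5/\SL_2$. To salvage your outline you would have to either kill the Brauer obstruction for the non-linearized bundle $W_2\otimes\mathcal{E}$ over $G(6,V_b)/\PGL_2$ directly, or insert an auxiliary linearizing factor as the paper does.
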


\begin{proof}
Since $\zeta$ acts on $\mathcal{E}\otimes\mathcal{L}^2$ by multiplication by $(e^{\pi i/3})^{9b-9}=(-1)^{b-1}=1$, 
the bundle $\mathcal{E}\otimes\mathcal{L}^2$ is ${\PGL}_2\times{\PGL}_3$-linearized. 
We identify ${\proj}\mathcal{E}$ with ${\proj}(\mathcal{E}\otimes\mathcal{L}^2)$ and apply the no-name lemma to the latter, 
which is possible by the above lemma. 
Then we have 
\begin{equation*}
{\proj}\mathcal{E}/{\PGL}_2\times{\PGL}_3 \sim {\proj}^{6b+9}\times({\proj}(V_b\boxtimes W_2)/{\PGL}_2\times{\PGL}_3). 
\end{equation*}
In order to show that ${\proj}(V_b\boxtimes W_2)/{\SL}_2\times{\SL}_3$ is stably rational of level $6b+9$, 
we consider the product $U = {\proj}(V_b\boxtimes W_2) \times {\proj}(V_5\boxtimes W_1)$. 
Let $\mathcal{L}'\to{\proj}(V_5\boxtimes W_1)$ be the tautological bundle. 
The first projection $U\to{\proj}(V_b\boxtimes W_2)$ may be identified with 
the projective bundle ${\proj}(V_5\boxtimes W_1\otimes\mathcal{L})$, 
while the second $U\to{\proj}(V_5\boxtimes W_1)$ may be identified with ${\proj}(V_b\boxtimes W_2\otimes\mathcal{L}')$. 
Since $\zeta$ acts trivially on both $V_5\boxtimes W_1\otimes\mathcal{L}$ and $V_b\boxtimes W_2\otimes\mathcal{L}'$, 
these bundles are ${\PGL}_2\times{\PGL}_3$-linearized. 
Applying the no-name lemma to the two projections, we obtain 
\begin{eqnarray*}
U/{\PGL}_2\times{\PGL}_3 & \sim & {\proj}^{17}\times({\proj}(V_b\boxtimes W_2)/{\PGL}_2\times{\PGL}_3) \\  
                                          & \sim & {\proj}^{6b+5}\times({\proj}(V_5\boxtimes W_1)/{\PGL}_2\times{\PGL}_3). 
\end{eqnarray*}
Here ${\PGL}_2\times{\PGL}_3$ acts on ${\proj}(V_5\boxtimes W_1)$ almost freely because 
we have \eqref{eqn:Grass linear} and ${\PGL}_2$ acts on $G(3, V_5)$ almost freely (\cite{Ma2} Lemma 2.7).  
Thus the problem is reduced to stable rationality of level $6b+5$ of 
\begin{equation*}
{\proj}(V_5\boxtimes W_1)/{\SL}_2\times{\SL}_3 \sim G(3, V_5)/{\SL}_2. 
\end{equation*}
This in turn follows from Lemma \ref{Grass correspo} (1) and the rationality of ${\proj}V_5/{\SL}_2$ (which has dimension $2$). 
\end{proof}

\subsection{The ${\PGL}_2\times{\PGL}_3$-action on ${\proj}(V_1\boxtimes W_2)$}\label{ssec:V1W2}

Before going to the case $g\equiv9\;(12)$, 
we here study the action of ${\PGL}_2\times{\PGL}_3$ on ${\proj}(V_1\boxtimes W_2)$. 
Let $\mathcal{E}\to\mathbb{G}(1, {\proj}W_2)$ be the universal subbundle and  
consider the birational equivalence ${\proj}(V_1\boxtimes W_2)\sim {\proj}(V_1\otimes\mathcal{E})$ 
in \eqref{eqn:Grass method for boxtensor}. 
Then ${\PGL}_2$ acts on each fiber of ${\proj}(V_1\otimes\mathcal{E})\to\mathbb{G}(1, {\proj}W_2)$ almost freely and almost transitively. 
On the other hand, since a general conic pencil on ${\proj}^2$ is determined by its $4$ base points in general position, 
we see that ${\PGL}_3$ acts on $\mathbb{G}(1, {\proj}W_2)$ almost transitively 
and the stabilizer of a general pencil is the permutation group of its $4$ base points. 
Thus the ${\PGL}_2\times{\PGL}_3$-action on ${\proj}(V_1\boxtimes W_2)$ is almost transitive, 
with $\frak{S}_4$ the stabilizer of a general point. 
Let us study how this $\frak{S}_4$ acts on ${\proj}(V_1\boxtimes W_2)$.  

Recall first that the irreducible representations of $\frak{S}_4$ are the following five (\cite{Se}): 
\begin{itemize}
\item the trivial representation $\chi_0$; 
\item the sign representation $\epsilon$; 
\item the $3$-dimensional standard representation $\psi$; 
\item the tensor product $\epsilon\psi=\epsilon\otimes\psi$; and 
\item the $2$-dimensional standard representation $\theta$ of $\frak{S}_3$ 
where we regard $\frak{S}_3$ as the quotient of $\frak{S}_4$ by the Klein $4$-group. 
\end{itemize}

Now we may normalize the $4$ base points on ${\proj}^2$ so that ${\proj}^2={\proj}(\psi)$ as an $\frak{S}_4$-space. 
Then ${\proj}(W_2)={\proj}({\rm Sym}^2\psi^{\vee})$, and we have the decomposition  
\begin{equation*}
{\rm Sym}^2\psi^{\vee} \simeq {\rm Sym}^2\psi \simeq \chi_0\oplus \theta \oplus \psi. 
\end{equation*}
The conic pencil associated to the $4$ points is ${\proj}(\theta)\subset{\proj}W_2$. 
Since the fiber of $\mathcal{E}\to\mathbb{G}(1, {\proj}W_2)$ over the point ${\proj}(\theta)\in\mathbb{G}(1, {\proj}W_2)$ is $\theta$ itself, 
we see that ${\proj}V_1\simeq {\proj}(\theta^{\vee})$ as an $\frak{S}_4$-space. 
Hence ${\proj}(V_1\boxtimes W_2) \simeq {\proj}(\theta^{\vee}\otimes(\chi_0\oplus\theta\oplus\psi))$. 
Noticing that $\theta^{\vee}\simeq\theta$, $\theta^{\vee}\otimes\theta\simeq \chi_0\oplus\epsilon\oplus\theta$ 
and $\theta^{\vee}\otimes\psi\simeq \psi\oplus \epsilon\psi$, 
we summarize the argument as follows. 

\begin{lemma}\label{V1W2}
The group ${\PGL}_2\times{\PGL}_3$ acts on ${\proj}(V_1\boxtimes W_2)$ almost transitively, 
with the stabilizer of a general point $[v]$ isomorphic to $\frak{S}_4$. 
As an $\frak{S}_4$-space, 
\begin{equation}\label{eqn:V1W2 as S4space}
{\proj}(V_1\boxtimes W_2) \simeq {\proj}(\chi_0\oplus \epsilon\oplus \theta^{\oplus2}\oplus \psi\oplus \epsilon\psi)  
\end{equation}
with $[v]$ being ${\proj}(\chi_0)\in{\proj}(V_1\boxtimes W_2)$. 
\end{lemma}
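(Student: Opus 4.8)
The plan is to prove the lemma in two stages: first establish the almost transitivity and pin down the point-stabilizer, and then read off the induced $\frak{S}_4$-module structure.

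For the first stage I would exploit the Grassmannian fibration of \S\ref{ssec:prod grp}. Writing $V_1\boxtimes W_2={\hom}(V_1^{\vee}, W_2)$ and invoking the birational identification \eqref{eqn:Grass method for boxtensor}, namely ${\proj}(V_1\boxtimes W_2)\sim{\proj}(V_1\otimes\mathcal{E})$ over $\mathbb{G}(1, {\proj}W_2)=G(2, W_2)$, the action of ${\PGL}_2\times{\PGL}_3$ splits along the fibration: ${\PGL}_3$ acts on the base and ${\PGL}_2$ acts fiberwise on the $V_1$-factor. A fiber ${\proj}(V_1\otimes\mathcal{E}_P)$ is the projectivization of a $4$-dimensional space of $2\times2$ matrices on which ${\PGL}_2$ acts by multiplication on one side; on the invertible locus this is almost simply transitive, so the fiberwise action is almost free and almost transitive. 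For the base, a point of $\mathbb{G}(1, {\proj}W_2)$ is a pencil of conics, a general one being determined by its $4$ base points in general position (two general conics meet transversally in $4$ points, by B\'ezout). Since ${\PGL}_3$ acts simply transitively on ordered projective frames of ${\proj}^2$, it is almost transitive on $\mathbb{G}(1, {\proj}W_2)$, and the stabilizer of a general pencil is the group of permutations of its $4$ base points, namely $\frak{S}_4$. Combining the two, the product group is almost transitive on ${\proj}(V_1\boxtimes W_2)$; because ${\PGL}_2$ acts freely on each fiber, every $h\in\frak{S}_4\subset{\PGL}_3$ admits a unique matching $g\in{\PGL}_2$ fixing $[v]$, so the full stabilizer is isomorphic to $\frak{S}_4$.

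For the second stage I would normalize the $4$ base points so that ${\proj}^2={\proj}(\psi)$ as an $\frak{S}_4$-space, whence ${\proj}W_2={\proj}({\rm Sym}^2\psi^{\vee})$ and ${\rm Sym}^2\psi^{\vee}\simeq\chi_0\oplus\theta\oplus\psi$. The conic pencil through the $4$ points is precisely the ${\proj}(\theta)$-summand, so the fiber $\mathcal{E}_{{\proj}(\theta)}$ equals $\theta$; the matching of the ${\PGL}_2$- and ${\PGL}_3$-components of the stabilizer then forces ${\proj}V_1\simeq{\proj}(\theta^{\vee})$ as an $\frak{S}_4$-space. It remains to decompose $V_1\otimes W_2\simeq\theta^{\vee}\otimes(\chi_0\oplus\theta\oplus\psi)$ using the self-duality $\theta^{\vee}\simeq\theta$ together with the Clebsch--Gordan relations $\theta\otimes\theta\simeq\chi_0\oplus\epsilon\oplus\theta$ and $\theta\otimes\psi\simeq\psi\oplus\epsilon\psi$, which yields \eqref{eqn:V1W2 as S4space}. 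The distinguished point $[v]$ corresponds under $\theta^{\vee}\otimes\theta\simeq{\rm End}(\theta)$ to the identity endomorphism, which spans the invariant line $\chi_0$.

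The main obstacle I expect is the careful bookkeeping of the stabilizer inside the product group and the resulting identification $V_1\simeq\theta^{\vee}$. Two points need care: that the setwise stabilizer of $4$ points in general position in ${\proj}^2$ is exactly $\frak{S}_4$ (a projective transformation permuting the $4$ points is determined by that permutation, since fixing a frame pointwise forces the identity), and that the diagonal embedding $\frak{S}_4\hookrightarrow{\PGL}_2\times{\PGL}_3$ makes ${\PGL}_2$ act on $V_1$ through $\theta^{\vee}$ rather than through some other $2$-dimensional $\frak{S}_4$-representation. Once this identification is secured, the remaining decompositions are a routine character computation, which I would verify against the character table of $\frak{S}_4$ in \cite{Se}.
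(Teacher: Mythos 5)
Your proposal is correct and takes essentially the same route as the paper's own proof: the fibration ${\proj}(V_1\otimes\mathcal{E})\to\mathbb{G}(1, {\proj}W_2)$ from \eqref{eqn:Grass method for boxtensor}, the identification of the stabilizer of a general pencil with the permutation group of its four base points, the normalization ${\proj}^2={\proj}(\psi)$ giving ${\rm Sym}^2\psi^{\vee}\simeq\chi_0\oplus\theta\oplus\psi$ and $\mathcal{E}_{{\proj}(\theta)}=\theta$, and the Clebsch--Gordan decompositions $\theta\otimes\theta\simeq\chi_0\oplus\epsilon\oplus\theta$, $\theta\otimes\psi\simeq\psi\oplus\epsilon\psi$. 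Your extra details --- the unique element of ${\PGL}_2$ matching each $h\in\frak{S}_4$, and $[v]$ corresponding to the identity in ${\rm End}(\theta)$, hence spanning $\chi_0$ --- simply make explicit what the paper leaves implicit.
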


For later use, we remark that the following fact is well-known: 
it is a simple application of the no-name method. 

\begin{proposition}\label{rational S4}
For any $\frak{S}_4$-representation $V$ the quotient ${\proj}V/\frak{S}_4$ is rational. 
\end{proposition}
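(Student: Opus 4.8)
The plan is to reduce the general case to two small base quotients by repeatedly stripping off summands with the no-name method, exploiting the normal Klein four-subgroup $K\subset\frak{S}_4$ with $\frak{S}_4/K\simeq\frak{S}_3$. By complete reducibility we decompose $V$ into irreducibles. The faithful irreducible representations of $\frak{S}_4$ are exactly $\psi$ and $\epsilon\psi$, whereas $\chi_0$, $\epsilon$ and $\theta$ all factor through the quotient $\frak{S}_3=\frak{S}_4/K$. This dichotomy organizes the argument.

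First suppose $V$ contains $\psi$ or $\epsilon\psi$, and write $V=V_0\oplus U$ with $V_0$ one such faithful $3$-dimensional summand. Since $\frak{S}_4$ has trivial center and $V_0$ is faithful, no nontrivial element acts as a scalar on $V_0$, so $\frak{S}_4$ acts almost freely on $\mathbb{P}V_0\simeq{\proj}^2$. The projection $\mathbb{P}(V_0\oplus U)\dashrightarrow\mathbb{P}V_0$ away from $\mathbb{P}U$ realizes $\mathbb{P}V$ birationally as the total space of the $\frak{S}_4$-linearized vector bundle $U\otimes\mathcal{O}_{{\proj}^2}(1)$ over $\mathbb{P}V_0$; because the scalar subgroup is trivial, no twist by a power of $\det$ is needed. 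The no-name lemma (\cite{Bo}) then gives $\mathbb{P}V/\frak{S}_4\sim({\proj}^2/\frak{S}_4)\times{\mathbb A}^{\dim U}$. As ${\proj}^2/\frak{S}_4$ is a unirational, hence rational, surface by Castelnuovo's criterion, $\mathbb{P}V/\frak{S}_4$ is rational.

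The remaining case is that $V$ contains neither $\psi$ nor $\epsilon\psi$, so $V$ is a sum of copies of $\chi_0$, $\epsilon$ and $\theta$ and the action factors through $\frak{S}_3$; thus $\mathbb{P}V/\frak{S}_4=\mathbb{P}V/\frak{S}_3$. I would then run the identical argument one group down. The $2$-dimensional representation $\theta$ is faithful for $\frak{S}_3$ with $\mathbb{P}\theta\simeq{\proj}^1$, so if $\theta\subset V$ the same no-name reduction yields $\mathbb{P}V/\frak{S}_3\sim({\proj}^1/\frak{S}_3)\times{\mathbb A}^{\dim V-2}$, which is rational since ${\proj}^1/\frak{S}_3\simeq{\proj}^1$. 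If $\theta\not\subset V$, then $V$ is a sum of the two characters of $\frak{S}_3$ and the action factors through $\frak{S}_3/A_3\simeq{\Z}/2$; the involution quotient $\mathbb{P}V/({\Z}/2)$ is rational, for instance by selecting a character summand, passing to the corresponding affine chart, and landing on a linear ${\Z}/2$-quotient, which is rational by classical invariant theory.

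The routine verifications are the standard no-name hypotheses at each stage, namely almost-freeness on $\mathbb{P}V_0$ and triviality of the relevant scalar action; these are immediate here because the chosen faithful summand has trivial scalar stabilizer and the centers involved are trivial. The only genuinely geometric inputs are the rationality of the base quotients ${\proj}^2/\frak{S}_4$ and ${\proj}^1/\frak{S}_3$ together with that of the final involution quotient; the first rests on Castelnuovo's criterion and the latter two are elementary. I therefore expect no real obstacle, consistent with the statement being a simple application of the no-name method.
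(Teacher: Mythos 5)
Your proof is correct and is precisely the ``simple application of the no-name method'' that the paper invokes without detail: you split off a faithful irreducible summand ($\psi$ or $\epsilon\psi$) when one exists, apply the no-name lemma to the linearized bundle $U\otimes\mathcal{O}_{{\proj}V_0}(1)$ over ${\proj}V_0$ (where almost-freeness and the absence of a determinant twist follow from the trivial center), and reduce the base to ${\proj}^2/\frak{S}_4$, handled by Castelnuovo. Your case analysis is also exhaustive, since any representation containing neither $\psi$ nor $\epsilon\psi$ has kernel containing the Klein four-group, so the descent to $\frak{S}_3$ and then to ${\Z}/2$ covers all remaining cases.
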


\subsection{The case $g\equiv9\; (12)$}\label{ssec:g9}

Let $b>0$ be an even number. 
By Proposition \ref{tetra loci}, the quotient $G(2, V_b\boxtimes W_2)/{\SL}_2\times{\SL}_3$ is birational to 
the tetragonal locus of genus $6b-3\equiv9\;(12)$. 
We shall prove 

\begin{proposition}\label{rational g9}
The quotient $G(2, V_b\boxtimes W_2)/{\SL}_2\times{\SL}_3$ is rational for $b\ne2, 8$. 
\end{proposition}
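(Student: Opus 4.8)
The plan is to strip off the two factors of ${\SL}_2\times{\SL}_3$ one at a time, arranging the bookkeeping so that the last quotient is taken by the finite group $\frak{S}_4$ of Lemma \ref{V1W2} and is therefore rational by Proposition \ref{rational S4}. The first move is to linearize the Grassmannian by \eqref{eqn:Grass linear}, so that $G(2, V_b\boxtimes W_2)/{\SL}_2\times{\SL}_3 \sim ((\C^2)^\vee\boxtimes V_b\boxtimes W_2)/\GL_2\times{\SL}_2\times{\SL}_3$. The key observation is that, after regrouping the triple tensor product, the module $Z=(\C^2)^\vee\otimes W_2$ is nothing but $V_1\boxtimes W_2$ as an ${\SL}_2\times{\SL}_3$-module, the extra $\GL_2$ acting through its determinant. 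I am thus looking at the quotient of $V_b\boxtimes Z$ by ${\SL}_2\times(\GL_2\times{\SL}_3)$, with ${\SL}_2$ on $V_b$ and $\GL_2\times{\SL}_3$ on $Z$ ($\dim Z=12$). It is precisely this regrouping that injects the almost-homogeneous module $V_1\boxtimes W_2$ into the problem; applying the double-bundle method directly to ${\proj}(V_b\boxtimes W_2)$ would not.

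Next I apply the double-bundle method of \S\ref{ssec:prod grp} to the Grassmannian map $V_b\boxtimes Z\dashrightarrow G(b+1, Z)$ (for $b\leq 10$) or $V_b\boxtimes Z\dashrightarrow G(12, V_b)$ (for $b\geq 12$) sending a homomorphism to its image. When $b\leq 10$ we have $\dim V_b=b+1<12=\dim Z$, and Lemma \ref{Grass method for boxtensor}, in the linear form forced by \eqref{eqn:Grass linear}, reduces the quotient to the product of ${\proj}(V_b^{\oplus(b+1)})/{\SL}_2$ — rational by the standard theory of binary forms — and $G(b+1, Z)/\GL_2\times{\SL}_3$. Since the center of $\GL_2$ acts trivially on the Grassmannian, the latter factor is exactly $G(b+1, V_1\boxtimes W_2)/{\SL}_2\times{\SL}_3$. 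When $b\geq 12$ I peel off in the opposite direction, landing on $G(12, V_b)/{\SL}_2$ — again rational — times the fiber factor ${\proj}(Z^{\oplus 12})/\GL_2\times{\SL}_3$, into which the module $V_1\boxtimes W_2$ now migrates.

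Either way the problem is reduced to a quotient in which ${\PGL}_2\times{\PGL}_3$ acts on a Grassmannian of, or on copies of, $V_1\boxtimes W_2$, where by Lemma \ref{V1W2} the action on ${\proj}(V_1\boxtimes W_2)$ is almost transitive with stabilizer $\frak{S}_4$. In the range $b\leq 10$ I invoke the slice form Lemma \ref{Grass correspo}(2): it turns $G(b+1, V_1\boxtimes W_2)/{\SL}_2\times{\SL}_3$ into $G(b, R)/\frak{S}_4$, where $R=(V_1\boxtimes W_2)/{\C}v$ is the $11$-dimensional $\frak{S}_4$-module $\epsilon\oplus\theta^{\oplus 2}\oplus\psi\oplus\epsilon\psi$ read off from Lemma \ref{V1W2}; applying \eqref{eqn:Grass linear} once more together with the no-name method, this is rational by Proposition \ref{rational S4}. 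In the range $b\geq 12$ the same almost transitivity, applied by slicing off one of the twelve copies, makes ${\proj}(Z^{\oplus 12})/\GL_2\times{\SL}_3$ a finite $\frak{S}_4$-quotient, again rational by Proposition \ref{rational S4}.

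The main obstacle is the verification of the almost-freeness hypotheses required by Lemma \ref{Grass correspo} and Lemma \ref{Grass method for boxtensor}. The delicate one is that ${\PGL}_2\times{\PGL}_3$ must act almost freely on $G(b+1, V_1\boxtimes W_2)$: because the action on ${\proj}(V_1\boxtimes W_2)$ is only almost transitive, with the finite stabilizer $\frak{S}_4$, rather than almost free, the convenient criterion Lemma \ref{Grass almost-free} does not apply, and the stabilizer of a generic $(b+1)$-plane must be analyzed by hand. I expect this to succeed for all even $b$ except the two boundary values. At $b=2$ the auxiliary surface is a del Pezzo surface (the blow-up of ${\proj}^2$ at $3b+1=7$ points), which carries extra automorphisms — the count $3b+1<10$ already violates the bound in Lemma \ref{almost-free g6} — matching the genuinely exceptional behaviour of $g=9$; at $b=8$ the generic $(b+1)$-plane acquires a nontrivial stabilizer and the slice reduction breaks down, which is the numerical coincidence responsible for excluding $g=45$ in Theorem \ref{main}. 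A secondary, purely routine, difficulty is tracking the determinant and ${\SL}_3$-center weights through \eqref{eqn:Grass linear} to confirm that the bundles entering the no-name steps are genuinely $\GL_2\times{\SL}_3$-linearized.
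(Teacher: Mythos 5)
Your overall architecture --- linearize the Grassmannian by \eqref{eqn:Grass linear}, regroup the triple tensor product as $V_b\boxtimes(V_1\boxtimes W_2)$, apply Lemma \ref{Grass method for boxtensor} in the direction dictated by $\dim V_b$ versus $12$, and finish via the $\frak{S}_4$-stabilizer of Lemma \ref{V1W2} and Proposition \ref{rational S4} --- is exactly the paper's, and your treatment of $b\geq12$ and of $b=4,6$ is essentially the paper's argument. But there are two genuine gaps. First, your uniform range ``$b\leq10$'' breaks precisely at $b=10$: there $G(b+1,V_1\boxtimes W_2)=G(11,V_1\boxtimes W_2)={\proj}(V_1\boxtimes W_2)^{\vee}$, and by (the dual of) Lemma \ref{V1W2} the ${\PGL}_2\times{\PGL}_3$-action on this $11$-dimensional space is almost \emph{transitive} with generic stabilizer $\frak{S}_4$, hence not almost free. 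Both Lemma \ref{Grass method for boxtensor} and Lemma \ref{Grass correspo}(2), as you invoke them, require almost-freeness of ${\PGL}_2\times{\PGL}_3$ on exactly this Grassmannian, so the no-name step that splits off ${\proj}(V_b^{\oplus(b+1)})/{\SL}_2$ is not justified when $b=10$. This is why the paper treats $b=10$ separately: it first slices along ${\proj}(V_{10}\otimes\mathcal{E})\to{\proj}(V_1\boxtimes W_2)^{\vee}$ to replace ${\PGL}_2\times{\PGL}_3$ by the finite group $\frak{S}_4$, and only then applies the no-name method and Lemma \ref{Grass method for boxtensor} to the ${\PGL}_2\times\frak{S}_4$-module $V_{10}\boxtimes\psi$.

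Second, your diagnosis of the excluded values $b=2,8$ is wrong, and the point you dismiss as ``purely routine'' weight-tracking is in fact the real obstruction. Almost-freeness is not the problem at $b=2,8$: your appeal to del Pezzo automorphisms concerns the action on $|{\OPP}(b,2)|$ from Lemma \ref{almost-free g6}, which is a hypothesis nowhere needed in this proof, and generic $(b+1)$-planes in $V_1\boxtimes W_2$ do have trivial stabilizer for $b=2,8$. What fails is the linearization hypothesis: the generator $\zeta=(-1,e^{2\pi i/3})$ of the kernel of ${\SL}_2\times{\SL}_3\to{\PGL}_2\times{\PGL}_3$ acts on the universal subbundle $\mathcal{E}$ over $G(b+1,V_1\boxtimes W_2)$ by a primitive sixth root of unity, hence on $\mathcal{E}\otimes(\det\mathcal{E})^d$ by that root raised to the power $1+d(b+1)$; when $3\mid(b+1)$, i.e.\ $b=2,8$, this exponent is never divisible by $6$, so no twist of $\mathcal{E}$ is ${\PGL}_2\times{\PGL}_3$-linearized and every no-name step in your chain collapses (for $b=4,6$ the twists $d=1$ and $d=-1$ work, which is the check your proof still owes). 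As written, your argument would apply verbatim to $b=2,8$ and ``prove'' those cases as well --- a sign that the unverified linearization condition, not almost-freeness, is where the proposition's exclusions and the paper's mod $12$ restriction actually come from.
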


To begin with, we rewrite $G(2, V_b\boxtimes W_2)/{\SL}_2\times{\SL}_3$ by \eqref{eqn:Grass linear} as 
\begin{equation}\label{Grass linear g9}
{\proj}(V_1\boxtimes V_b\boxtimes W_2)/{\SLSL}\times{\SL}_3. 
\end{equation}

\subsubsection{The case $b\geq12$}\label{sssec:b12}

When $b\geq12$, we have ${\dim}V_b>{\dim}(V_1\boxtimes W_2)=12$. 
We then want to use the method of \S \ref{ssec:prod grp} for  
$V_1\boxtimes V_b\boxtimes W_2 = (V_1\boxtimes W_2)\boxtimes V_b$, 
viewed as a representation of $({\SL}_2\times{\SL}_3)\times{\SL}_2$. 
Since $b$ is even, $V_b$ is a representation of ${\PGL}_2$ so that the universal subbundle over $G(12, V_b)$ is ${\PGL}_2$-linearized. 
By Lemma \ref{Grass almost-free}, ${\PGL}_2$ acts almost freely on $G(12, V_b)\simeq G(b-11, V_b)$ . 
Thus we can apply Lemma \ref{Grass method for boxtensor} to see that 
\eqref{Grass linear g9} is birational to 
\begin{equation*}
({\proj}(V_1\boxtimes W_2)^{\oplus12}/{\SL}_2\times{\SL}_3) \times (G(12, V_b)/{\SL}_2). 
\end{equation*}
Since ${\proj}V_b/{\SL}_2$ is rational by Katsylo \cite{Ka1}, 
$G(12, V_b)/{\SL}_2$ is stably rational of level $11$ by Lemma \ref{Grass correspo} (1). 
Hence it remains to prove that ${\proj}(V_1\boxtimes W_2)^{\oplus12}/{\SL}_2\times{\SL}_3$ is rational. 

By Lemma \ref{V1W2}, we can apply the slice method to the projection 
${\proj}(V_1\boxtimes W_2)^{\oplus12}\dashrightarrow{\proj}(V_1\boxtimes W_2)$ to the first summand. 
Then we have 
\begin{equation*}
{\proj}(V_1\boxtimes W_2)^{\oplus12}/{\PGL}_2\times{\PGL}_3 \sim (V_1\boxtimes W_2)^{\oplus11}/\frak{S}_4. 
\end{equation*}
The right side is rational by Proposition \ref{rational S4}. 
Thus Proposition \ref{rational g9} is proved for $b\geq12$.

\subsubsection{The case $b=4, 6$}\label{sssec:b46}

Let $b$ be either $4$ or $6$. 
Then ${\dim}V_b<{\dim}(V_1\boxtimes W_2)$. 
We shall consider $V_1\boxtimes V_b\boxtimes W_2$ as 
the ${\SL}_2\times({\SL}_2\times{\SL}_3)$-representation $V_b\boxtimes (V_1\boxtimes W_2)$, 
and apply the method of \S \ref{ssec:prod grp}.  
Let $\mathcal{E}\to G(b+1, V_1\boxtimes W_2)$ be the universal subbundle. 
Then $\zeta$ acts on $\mathcal{E}\otimes{\det}\mathcal{E}$ trivially in case $b=4$, 
and on $\mathcal{E}\otimes({\det}\mathcal{E})^{-1}$ trivially in case $b=6$. 
One checks (e.g., by looking at various special loci in ${\proj}(V_1\boxtimes W_2)$) that 
${\PGL}_2\times{\PGL}_3$ acts on $G(b+1, V_1\boxtimes W_2)$ almost freely. 
So by Lemma \ref{Grass method for boxtensor}, \eqref{Grass linear g9} is birational to 
\begin{equation*}
({\proj}V_b^{\oplus b+1}/{\SL}_2)\times(G(b+1, V_1\boxtimes W_2)/{\SL}_2\times{\SL}_3). 
\end{equation*}
The first quotient ${\proj}V_b^{\oplus b+1}/{\SL}_2$ is rational by Katsylo \cite{Ka2}, 
so it suffices to show that $G(b+1, V_1\boxtimes W_2)/{\SL}_2\times{\SL}_3$ is stably rational of level $(b+1)^2-4$. 

We regard $V_1\boxtimes W_2$ as an $\frak{S}_4$-representation as in the right side of \eqref{eqn:V1W2 as S4space}  
and denote $V=V_1\boxtimes W_2/\chi_0$. 
Combining Lemma \ref{Grass correspo} (2) and Lemma \ref{V1W2}, we see that 
\begin{equation*}
{\proj}^b\times(G(b+1, V_1\boxtimes W_2)/{\SL}_2\times{\SL}_3) \sim G(b, V)/\frak{S}_4. 
\end{equation*}
By looking at the decomposition \eqref{eqn:V1W2 as S4space}, 
we can find an $\frak{S}_4$-invariant subspace $V'\subset V$ of dimension $b$ in either case. 
If $V''\subset V$ is the complementary sub $\frak{S}_4$-representation, 
we have the $\frak{S}_4$-invariant open set ${\hom}(V', V'')\subset G(b, V)$ 
where $\frak{S}_4$ acts on ${\hom}(V', V'')$ linearly. 
Then ${\hom}(V', V'')/\frak{S}_4$ is rational by Proposition \ref{rational S4}, 
so Proposition \ref{rational g9} is proved for $b=4, 6$.

\begin{remark}
It seems that the same approach does not work for $b=2, 8$, 
because $\zeta$ acts on $\mathcal{E}\otimes({\det}\mathcal{E})^d$ nontrivially for any $d\in{\Z}$. 
\end{remark}

\subsubsection{The case $b=10$}\label{sssec:b10}

As in \S \ref{sssec:b46}, we consider $V_1\boxtimes V_{10}\boxtimes W_2$ as 
the ${\SL}_2\times({\SL}_2\times{\SL}_3)$-representation $V_{10}\boxtimes (V_1\boxtimes W_2)$  
and identify it with the vector bundle $V_{10}\otimes\mathcal{E}$ over $G(11, V_1\boxtimes W_2)$, 
where $\mathcal{E}$ is the universal subbundle. 
In this case, we have 
\begin{equation*}
G(11, V_1\boxtimes W_2) = {\proj}(V_1\boxtimes W_2)^{\vee} = {\proj}(V_1^{\vee}\boxtimes W_2^{\vee}). 
\end{equation*}
By identifying ${\GL}_n={\GL}({\C}^n)$ with ${\GL}(({\C}^{n})^{\vee})$ through the dual representation, 
we can apply the result of \S \ref{ssec:V1W2} to the ${\PGL}_2\times{\PGL}_3$-action on ${\proj}(V_1^{\vee}\boxtimes W_2^{\vee})$. 
Thus we find that it is almost transitive with 
the stabilizer of a general point $[H]\in{\proj}(V_1\boxtimes W_2)^{\vee}$ isomorphic to $\frak{S}_4$, 
and the corresponding hyperplane $H\subset {\proj}(V_1\boxtimes W_2)$ is isomorphic to 
\begin{equation*}
{\proj}(\epsilon\oplus\theta^{\oplus2}\oplus\psi\oplus\epsilon\psi)^{\vee} \simeq {\proj}(\epsilon\oplus\theta^{\oplus2}\oplus\psi\oplus\epsilon\psi) 
\end{equation*}
as an $\frak{S}_4$-space. 
We set $V=\epsilon\oplus\theta^{\oplus2}\oplus\psi\oplus\epsilon\psi$. 

We apply the slice method to the projection 
${\proj}(V_{10}\otimes\mathcal{E})\to{\proj}(V_1\boxtimes W_2)^{\vee}$. 
This gives  
\begin{equation*}
{\proj}(V_{10}\otimes\mathcal{E})/{\PGL}_2\times{\PGL}_2\times{\PGL}_3 
\sim {\proj}(V_{10}\boxtimes V)/{\PGL}_2\times\frak{S}_4. 
\end{equation*}
Next we use the no-name method for the projection ${\proj}(V_{10}\boxtimes V)\dashrightarrow{\proj}(V_{10}\boxtimes\psi)$ 
from the rest summand $V_{10}\boxtimes(\epsilon\oplus \theta^{\oplus2}\oplus\epsilon\psi)$. 
Then we have  
\begin{equation*}
{\proj}(V_{10}\boxtimes V)/{\PGL}_2\times\frak{S}_4 \sim {\C}^{88}\times ({\proj}(V_{10}\boxtimes\psi)/{\PGL}_2\times\frak{S}_4). 
\end{equation*}
Finally, we apply Lemma \ref{Grass method for boxtensor} to the ${\PGL}_2\times\frak{S}_4$-representation $V_{10}\boxtimes\psi$. 
The group ${\PGL}_2$ acts on $G(3, V_{10})$ almost freely by Lemma \ref{Grass almost-free}. 
Then the quotient $G(3, V_{10})/{\SL}_2$ is stably rational of level $2$ 
by Lemma \ref{Grass correspo} (1) and the rationality of ${\proj}V_{10}/{\SL}_2$ (\cite{B-K}).  
On the other hand, ${\proj}(\psi^{\oplus3})/\frak{S}_4$ is rational by Proposition \ref{rational S4}. 
Hence by Lemma \ref{Grass method for boxtensor} we conclude that 
${\proj}(V_{10}\boxtimes\psi)/{\PGL}_2\times\frak{S}_4$ is rational. 
This finishes the proof of Proposition \ref{rational g9} for $b=10$.


\section{The case of the blown-up ${\proj}^3$}\label{sec:P3}

In this section we study the cases $g\equiv1, 10\; (12)$ in Theorem \ref{main}, 
where the basic ${\proj}^2$-bundle is $X_{0,1}$, the blow-up of ${\proj}^3$ along a line $l$. 
We keep the notation in \S \ref{ssec:X01}.

\subsection{The case $g\equiv10\; (12)$}\label{ssec:g10}

Let $b>0$ be an odd number. 
Let $\mathcal{L}\to|L_{2,b}|$ be the tautological bundle and $\mathcal{E}\to|L_{2,b}|$ be the bundle 
$\underline{H^0(L_{2,b+1})}/\mathcal{L}\otimes H^0(L_{0,1})$ as defined in Proposition \ref{tetra loci}. 
Since $L_{2,b+1}$ is ${\aut}(X_{0,1})$-linearized by Lemma \ref{linearization X01}, $\mathcal{E}$ is ${\aut}(X_{0,1})$-linearized. 
The quotient ${\proj}\mathcal{E}/{\aut}(X_{0,1})$ is birational to 
the tetragonal locus of genus $6b+4\equiv10\;(12)$ by Proposition \ref{tetra loci}.

\begin{lemma}\label{almost-free g10}
The group ${\aut}(X_{0,1})$ acts on $|L_{2,b}|$ almost freely. 
\end{lemma}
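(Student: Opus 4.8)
The plan is to follow the strategy of Lemma~\ref{almost-free g6}: find enough rigid geometry on a general $S\in|L_{2,b}|$ to force its stabilizer in $\aut(X_{0,1})$ to be trivial. The new ingredient is that $\pi|_S\colon S\to\proj^1$ is a conic bundle, since $L_{2,b}$ restricts to $\Oplane(2)$ on each $\pi$-fiber $\proj^2$. Writing a member of $|L_{2,b}|$ as a relative quadratic form $q\in H^0({\rm Sym}^2\mathcal{E}_{0,1}^{\vee}\otimes\Oline(b))$ and taking $\det q$, the discriminant $\Delta\subset\proj^1$ of this conic bundle is a divisor of degree $3b+2$ (using $\det\mathcal{E}_{0,1}^{\vee}\simeq\Oline(1)$). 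For a general $S$ this $\Delta$ is reduced and consists of $3b+2\geq5$ points in general position; it suffices to exhibit one such $S$, since the condition that $\Delta$ have nontrivial setwise stabilizer in $\PGL_2$ is closed on $|L_{2,b}|$.

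First I treat the action on the base. By \eqref{eqn:basic sequence} any $g\in\aut(X_{0,1})$ stabilizing $S$ preserves $\pi$ and hence induces $\bar g\in\PGL_2$ on the base $\proj^1$; since $g(S_t)=S_{\bar g(t)}$, the automorphism $\bar g$ permutes the singular fibers, i.e.\ preserves $\Delta$. A general configuration of $N\geq5$ points on $\proj^1$ has trivial setwise stabilizer in $\PGL_2$, so $\bar g=\mathrm{id}$. Lifting to the double cover $\tilde{G}\simeq\SL_2\ltimes\hom(V_1,V_1)\rtimes\GL_2$ of \S\ref{ssec:X01}, this says that the $\SL_2$-component of $g$ is $\pm1$, so $g$ is represented by a pair $(h,g_2)\in\hom(V_1,V_1)\rtimes\GL_2$.

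It remains to show that such a fiberwise $g$ is trivial, and here I use the Taylor expansion of $S$ along the invariant surface $\Sigma$ provided by the filtration \eqref{eqn:filtration H0(X01Lab)}, $H^0(L_{2,b})=\bigoplus_{i=b}^{b+2}V_i(X_0,X_1)\otimes V_{b+2-i}(X_2,X_3)$. With $g_1=\mathrm{id}$, the factor $g_2$ acts only on the $V_{b+2-i}(X_2,X_3)$, while $h$ acts by the unipotent upper shift of \eqref{eqn:unipotent radical action X01 general}; invariance of $S$ up to a scalar $\lambda$ is then read off coefficientwise. The leading coefficient is $C_0=S\cap\Sigma\in|\OSigma(b,2)|$ and gives $(\mathrm{id}\otimes g_2)C_0=\lambda C_0$; for $b\geq2$ a general $C_0$, viewed as a map $V_2^{\vee}\to V_b$, is injective, forcing $g_2=\mathrm{id}$ and $\lambda=1$. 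Comparing the next coefficient, in $V_{b+1}(X_0,X_1)\otimes V_1(X_2,X_3)$, the relation becomes $h\cdot C_0=0$, and since $h\mapsto h\cdot C_0$ is injective for general $C_0$ (as $\dim\hom(V_1,V_1)=4\leq2(b+2)$), we get $h=0$, whence $g=\mathrm{id}$.

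The main obstacle is this fiberwise step for small $b$, notably $b=1$: there the leading coefficient $C_0\in V_1\otimes V_2$ no longer determines $g_2$ (a nontrivial involution of $\proj^1$ can preserve a general $(1,2)$-curve on $\Sigma$), so one genuinely has to use the full surface $S$. Concretely, for $b=1$ I would first eliminate $h$ and pin down $\lambda=1$ using the leading and the top coefficient $c_{b+2}\in V_{b+2}(X_0,X_1)$ (on which $g_2$ acts trivially), and then use the first-order coefficient in $V_2(X_0,X_1)\otimes V_1(X_2,X_3)$, on which $g_2$ acts through the standard representation $V_1$ of $\GL_2$, to conclude $g_2=\mathrm{id}$. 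Thus the argument becomes uniform in $b$ once one keeps track of all Taylor coefficients of $S$ along $\Sigma$, and the only real work is verifying the genericity (rank and injectivity) statements on a single explicit member, which simultaneously secures the generality of $\Delta$ used in the base step.
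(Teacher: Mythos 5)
Your overall strategy is genuinely different from the paper's proof: the paper simply quotes del Centina--Gimigliano to identify a general $S\in|L_{2,b}|$ with the blow-up of ${\proj}^2$ at $3b+3$ general points and then invokes Koitabashi's theorem that such a surface has no nontrivial automorphisms (the case $b=1$, a general cubic surface, being classical); your discriminant-plus-Taylor-coefficient argument is instead the kind of approach the paper only hints at parenthetically (``see also Lemma \ref{almost free g1} for another approach''). The base step is fine (the discriminant has degree $3b+2\geq5$, and diagonal relative quadratic forms show it can be an arbitrary general reduced divisor), but your fiberwise step contains a genuine error. Injectivity of $C_0\colon V_2(X_2,X_3)^{\vee}\to V_b(X_0,X_1)$ together with $(\mathrm{id}\otimes g_2)C_0=\lambda C_0$ does \emph{not} force $g_2=\mathrm{id}$: it only forces $g_2$ to act on $V_2(X_2,X_3)$ by the scalar $\lambda$, i.e.\ $g_2=\mu\cdot\mathrm{id}$ with $\mu^2=\lambda$, and such elements are nontrivial in ${\aut}(X_{0,1})$ (they act by $[X_0:X_1:X_2:X_3]\mapsto[X_0:X_1:\mu X_2:\mu X_3]$). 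Consequently the next-coefficient relation is not $h\cdot C_0=0$ but $(\mu^2-\mu)\,C_1=\mu^2\, h\cdot C_0$. To proceed you must first observe that a general $C_1\in V_{b+1,1}$ does not lie in the at most $4$-dimensional image of $h\mapsto h\cdot C_0$ (as $\dim V_{b+1,1}=2(b+2)\geq 6$), which forces $\mu=1$; only then does your injectivity argument give $h=0$. This is a short fix, but the step as written fails.

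The case $b=1$ is a second gap, and your sketch for it does not work as described. The top-coefficient equation reads $(1-\lambda)C_2=-\,h\cdot(g_2C_1)-\tfrac{1}{2}\,h^{2}\cdot(g_2C_0)$: it involves $h$ and $C_1$, and as $h$ ranges over the $4$-dimensional space ${\hom}(V_1,V_1)$ the right-hand side sweeps out a large subset of the $4$-dimensional space $V_3(X_0,X_1)$, so the leading and top coefficients alone cannot ``eliminate $h$ and pin down $\lambda=1$.'' What actually saves the day for $b=1$ is an analysis of the full stabilizer of $[C_0]$ in ${\GL}_2$: writing $C_0=X_0A+X_1B$ and choosing coordinates simultaneously diagonalizing the pencil of binary quadrics $\langle A,B\rangle$, say $A=aX_2^2+bX_3^2$, $B=cX_2^2+dX_3^2$, this stabilizer is generated by scalars and the involution $\mathrm{diag}(\mu,-\mu)$. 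One then excludes the involution with the \emph{middle} coefficient: writing $C_1=PX_2+QX_3$, invariance forces $(\mu-1)PX_2+(\mu+1)QX_3$ to lie in $\mathrm{Im}(h\mapsto h\cdot C_0)=(aX_0+cX_1)V_1X_2\oplus(bX_0+dX_1)V_1X_3$, i.e.\ $(\mu-1)P$ and $(\mu+1)Q$ to lie in proper subspaces of $V_2$, which fails for general $P,Q$ for every $\mu$. So your approach can be completed, but the completion requires precisely the case analysis you defer; and as a minor point, the claim that ``nontrivial setwise stabilizer of $\Delta$'' is a closed condition should be restricted to the open locus of reduced discriminants (finite-order elements of ${\PGL}_2$ form a non-compact set, so closedness is not automatic on all of $|L_{2,b}|$).
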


\begin{proof}
As in Lemma \ref{almost-free g6}, a general member of $|L_{2,b}|$ is the blow-up of ${\proj}^2$ at $3b+3$ general points 
(put $s=3b+2$, $n=s-1$ in \cite{dC-G} \S 2), 
and such a surface has no nontrivial automorphism: see \cite{Ko} for the case $b\geq2$, while the case $b=1$ is well-known. 
(See also Lemma \ref{almost free g1} for another approach.)   
\end{proof}

\begin{proposition}\label{rational g10}
The quotient ${\proj}\mathcal{E}/{\aut}(X_{0,1})$ is rational. 
\end{proposition}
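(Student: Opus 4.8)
The plan is to follow the same strategy as in the case $g\equiv6\;(12)$ (Proposition \ref{rational g6}), adapting it to the group ${\aut}(X_{0,1})$ and its double cover $\tilde{G}\simeq{\SL}_2\ltimes{\hom}(V_1,V_1)\rtimes{\GL}_2$. First I would use the no-name lemma to strip off the projective-bundle fiber of ${\proj}\mathcal{E}\to|L_{2,b}|$. To do this I must exhibit a twist $\mathcal{E}\otimes\mathcal{L}^{\,m}$ on which the kernel of $\tilde{G}\to{\aut}(X_{0,1})$, generated by $(-1,-1)\in{\SL}_2\times{\GL}_2$, acts trivially, so that the twisted bundle is genuinely ${\aut}(X_{0,1})$-linearized; by Lemma \ref{linearization X01} the relevant parity condition is controlled by $a+b$ being even, and here $b$ is odd, which is exactly why $L_{2,b+1}$ (with $a+b=2+(b+1)$ even) was chosen. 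Since Lemma \ref{almost-free g10} gives almost-freeness of the ${\aut}(X_{0,1})$-action on $|L_{2,b}|$, the no-name lemma then yields
\begin{equation*}
{\proj}\mathcal{E}/{\aut}(X_{0,1}) \sim {\proj}^{M}\times\bigl(|L_{2,b}|/{\aut}(X_{0,1})\bigr)
\end{equation*}
for the appropriate fiber dimension $M$, reducing the problem to stable rationality of $|L_{2,b}|/{\aut}(X_{0,1})={\proj}H^0(L_{2,b})/{\aut}(X_{0,1})$.

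Next I would analyze the representation $H^0(L_{2,b})$ via its $\tilde{G}$-invariant filtration \eqref{eqn:filtration H0(X01Lab)}. The associated graded pieces are the ${\SL}_2\times{\GL}_2$-representations $V_{i,2+b-i}$ for $b\le i\le b+2$, namely $V_{b,2}\oplus V_{b+1,1}\oplus V_{b+2,0}$, with the unipotent radical ${\hom}(V_1,V_1)$ acting by the raising maps \eqref{eqn:unipotent radical action X01 general}. The strategy is to peel off the unipotent part and the reductive part in stages: apply the slice method or no-name method along the quotient maps $H^0(L_{2,b})\to H^0(L_{2,b})/F_i$ to trivialize the action of the unipotent radical on suitable affine charts, reducing to a quotient by the reductive group ${\SL}_2\times{\GL}_2$ (equivalently ${\SL}_2\times{\SL}_2$ after handling the central torus). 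One then brings in the auxiliary-product technique of Proposition \ref{rational g6}: form a product with a second projective space ${\proj}(V_d\boxtimes V_e)$ carrying a free enough action, apply the no-name lemma to both projections to establish stable rationality of the desired factor, and finally reduce to a Grassmannian quotient $G(a,V_d)/{\SL}_2$ that is known to be (stably) rational via Lemma \ref{Grass correspo} and Katsylo's rationality of ${\proj}V_d/{\SL}_2$.

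The main obstacle I expect is twofold. The first difficulty is purely arithmetic: because $b$ is merely constrained to be odd, the dimensions of the graded pieces $V_{b,2},V_{b+1,1},V_{b+2,0}$ and the resulting stable-rationality levels depend on $b$, so I anticipate needing to split into ranges of $b$ (a ``$b$ large'' generic case handled uniformly, plus finitely many small exceptional values treated by hand), exactly as the $g\equiv9$ argument in \S\ref{ssec:g9} split into $b\ge12$, $b=4,6$, and $b=10$. The second and more delicate difficulty is the central-character bookkeeping: at each application of the no-name or slice method I must verify that the generator $(-1,-1)$ acts trivially on the relevant twisted bundle, and it is precisely the failure of such a twist to exist that produced exceptional genera elsewhere; so the crux is to arrange the ${\GL}_2$-weights and the powers of $\mathcal{L}$ so that every linearization check goes through, and I would expect that for the unipotent-radical quotient $H^0(L_{2,b})/{\SL}_2\times{\GL}_2$ the cleanest route is to first quotient by the unipotent part to land in a space where the ${\GL}_2$-action is linear on a vector group, then invoke Proposition \ref{rational S4}-style or Katsylo-style rationality for the residual reductive quotient.
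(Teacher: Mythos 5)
Your opening reduction agrees with the paper's: since $b$ is odd, $L_{2,b+1}$ is ${\aut}(X_{0,1})$-linearized (Lemma \ref{linearization X01}), hence so is $\mathcal{E}$, and the no-name lemma together with Lemma \ref{almost-free g10} reduces everything to stable rationality of $|L_{2,b}|/{\aut}(X_{0,1})$ of level $6b+13$. After that, however, your plan has a genuine gap. You propose to use the filtration \eqref{eqn:filtration H0(X01Lab)} to ``trivialize the action of the unipotent radical'' and thereby land in a quotient by the reductive group ${\SL}_2\times{\GL}_2$, and only then run the product trick of Proposition \ref{rational g6}. This cannot work as described: the invariant subspaces of $H^0(L_{2,b})$ are exactly the filtration steps, so the only nonzero proper quotient on which ${\hom}(V_1,V_1)$ acts trivially is the top piece $V_{b,2}$; but on ${\proj}V_{b,2}$ the ${\aut}(X_{0,1})$-action is neither almost free (the entire $4$-dimensional unipotent radical fixes every point, so the hypotheses of the no-name lemma and of Lemma \ref{Grass no-name} fail) nor, for $b\geq2$, almost transitive (its dimension $3b+2$ exceeds $6=\dim{\PGL}_2\times{\PGL}_2$, so the slice method fails too). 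Quotients such as $H^0(L_{2,b})/F_{b+2}\simeq V_{b,2}\oplus V_{b+1,1}$ avoid this, but they still carry a nontrivial unipotent action, so you never arrive at a quotient by ${\SL}_2\times{\GL}_2$; consequently your announced endgame --- a product with some ${\proj}(V_d\boxtimes V_e)$, then $G(a,V_d)/{\SL}_2$ and Katsylo --- is unavailable, since $V_d\boxtimes V_e$ is not even an ${\aut}(X_{0,1})$-space (the unipotent radical has nowhere to act), and no mechanism for absorbing ${\hom}(V_1,V_1)$ is ever supplied. (Your anticipated splitting into ranges of $b$ also hits trouble exactly at $b=1$, i.e.\ genus $10$ itself, where ${\PGL}_2\times{\PGL}_2$ cannot act almost freely on the $5$-dimensional ${\proj}V_{1,2}$, so the almost-freeness argument of Lemma \ref{almost free g1} is not available.)

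The paper's proof avoids the filtration entirely and is uniform in $b$: it forms the product $U=|L_{2,b}|\times|L_{2,1}|$ and applies the no-name lemma to both projections, which are the projectivizations of $\mathcal{L}\otimes H^0(L_{2,1})$ and $\mathcal{L}'\otimes H^0(L_{2,b})$, both ${\aut}(X_{0,1})$-linearized precisely because $L_{4,b+1}$ has even total degree when $b$ is odd. This replaces $|L_{2,b}|/{\aut}(X_{0,1})$, up to stable equivalence, by the single fixed quotient $|L_{2,1}|/{\aut}(X_{0,1})$, the moduli space of cubic surfaces with a marked line --- no case division in $b$ occurs. Its stable rationality of level $1$ is then proved by passing to the incidence variety of triples $p\in l'\subset S$, mapping to the flag space of $p\in l'\subset T_pS$, and slicing: ${\GL}_4$ acts transitively on flags with connected solvable stabilizer, the fiber is an open set of a linear system, and Miyata's theorem \cite{Mi} gives rationality of the fiber quotient. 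It is this slice over a homogeneous flag variety that finally absorbs the unipotent radical of ${\aut}(X_{0,1})$; some device of this kind (compare the slice on $|L_{1,1}|$ in the proof of Proposition \ref{rational g1}) is exactly what your outline is missing.
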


\begin{proof}
By the no-name lemma we see that 
\begin{equation*}
{\proj}\mathcal{E}/{\aut}(X_{0,1}) \sim {\proj}^{6b+13}\times(|L_{2,b}|/{\aut}(X_{0,1})), 
\end{equation*}
so it suffices to show that $|L_{2,b}|/{\aut}(X_{0,1})$ is stably rational of level $6b+13$. 
Consider the product $U=|L_{2,b}|\times|L_{2,1}|$. 
We can identify the first projection $U\to|L_{2,b}|$ with the projective bundle ${\proj}(\mathcal{L}\otimes H^0(L_{2,1}))$, 
and the second $U\to|L_{2,1}|$ with ${\proj}(\mathcal{L}'\otimes H^0(L_{2,b}))$ where $\mathcal{L}'\to|L_{2,1}|$ is the tautological bundle. 
Since $L_{4,b+1}$ is ${\aut}(X_{0,1})$-linearized by Lemma \ref{linearization X01}, 
both $\mathcal{L}\otimes H^0(L_{2,1})$ and $\mathcal{L}'\otimes H^0(L_{2,b})$ are ${\aut}(X_{0,1})$-linearized. 
Then we obtain by the no-name method that 
\begin{eqnarray*}
U/{\aut}(X_{0,1}) & \sim & {\proj}^{15}\times(|L_{2,b}|/{\aut}(X_{0,1})) \\ 
& \sim & {\proj}^{6b+9}\times(|L_{2,1}|/{\aut}(X_{0,1})). 
\end{eqnarray*}

We shall prove that $|L_{2,1}|/{\aut}(X_{0,1})$ is stably rational of level $1$. 
Recall that 
$|L_{2,1}|$ is identified with the linear system of cubic surfaces in ${\proj}^3$ containing the line $l$. 
Thus, if we consider the parameter space 
\begin{equation*}
V = \{ (S, l')\in|{\Ospace}(3)|\times\mathbb{G}(1, {\proj}^3) \; | \; l'\subset S \}, 
\end{equation*}
then $|L_{2,1}|/{\aut}(X_{0,1})$ gets bitational to $V/{\PGL}_4$, the moduli space of cubic surfaces with a line on it. 
Let $\mathcal{F}\to V$ be the pullback of the universal subbundle over $\mathbb{G}(1, {\proj}^3)$. 
We have 
\begin{equation*}
{\proj}\mathcal{F} = \{ (S, l', p)\in|{\Ospace}(3)|\times\mathbb{G}(1, {\proj}^3)\times{\proj}^3 \; | \; p\in l'\subset S \}. 
\end{equation*}
Let $\mathcal{F}'$ be the twist of $\mathcal{F}$ by the pullback of ${\sheaf}_{|{\Ospace}(3)|}(1)$. 
We can identify ${\proj}\mathcal{F}$ with ${\proj}\mathcal{F}'$,  
and $\mathcal{F}'$ is ${\PGL}_4$-linearized because $\sqrt{-1}\in{\SL}_4$ acts on it trivially. 
By the no-name lemma for $\mathcal{F}'$ we have 
\begin{equation*}
{\proj}\mathcal{F}/{\PGL}_4 \sim {\proj}^1\times(V/{\PGL}_4). 
\end{equation*}
On the other hand, consider the space $T$ of flags $p\in l'\subset H\subset{\proj}^3$, where $H$ is a plane. 
We have the ${\PGL}_4$-equivariant map 
\begin{equation}\label{eqn:map to flag}
{\proj}\mathcal{F} \dashrightarrow T, \qquad (S, l', p)\mapsto(p\in l'\subset T_pS). 
\end{equation}
Its fiber over $(p\in l'\subset H)\in T$ is an open set of a \textit{linear} system ${\proj}W$ in $|{\Ospace}(3)|$. 
The group ${\GL}_4$ acts on $T$ transitively with the stabilizer $G$ of $(p\in l'\subset H)$ being connected and solvable. 
By the slice method for \eqref{eqn:map to flag} we see that 
\begin{equation*}
{\proj}\mathcal{F}/{\PGL}_4 \sim {\proj}W/G, 
\end{equation*}
and ${\proj}W/G$ is rational by Miyata's theorem \cite{Mi}. 
Hence $|L_{2,1}|/{\aut}(X_{0,1})$ is stably rational of level $1$. 
This finishes the proof of Proposition \ref{rational g10}. 
\end{proof}

\subsection{The case $g\equiv1\; (12)$}\label{ssec:g1}

Let $b>0$ be an even number. 
Then $L_{2,b}$ is ${\aut}(X_{0,1})$-linearized by Lemma \ref{linearization X01}. 
The quotient $\mathbb{G}(1, |L_{2,b}|)/{\aut}(X_{0,1})$ is birational to the tetragonal locus of genus $6b+1\equiv1\;(12)$. 

Before proving its rationality, 
we recall that the ${\aut}(X_{0,1})$-representation $H^0(L_{2,b})$ is reducible: 
it has the invariant filtration 
\begin{equation}\label{filtration X01}
0 \subset H^0(L_{2,-2})\otimes H^0(L_{0,b+2}) \subset H^0(L_{1,-1})\otimes H^0(L_{1,b+1}) \subset H^0(L_{2,b}) 
\end{equation}
defined by the vanishing orders along $\Sigma$. 
Here $H^0(L_{d,-d})$ is $1$-dimensional and defines $d\Sigma$. 
If we consider $H^0(L_{2,b})$ as a representation of the double cover 
\begin{equation*}
\tilde{G}={\SL}_2\ltimes {\hom}(V_1, V_1)\rtimes{\GL}_2
\end{equation*}
of ${\aut}(X_{0,1})$, the successive quotients of \eqref{filtration X01} 
are the ${\SL}_2\times{\GL}_2$-representations 
\begin{equation*}
V_{b+2,0}, \quad V_{b+1,1}, \quad V_{b,2}, 
\end{equation*}
with the action of ${\hom}(V_1, V_1)$ as described in \eqref{eqn:unipotent radical action X01 general}. 
This structure of $H^0(L_{2,b})$ was first observed in case $b=1$ by B\"ohning-Bothmer-Casnati \cite{B-B-C}. 

We consider the quotient representation  
\begin{equation*}
W = H^0(L_{2,b})/(H^0(L_{2,-2})\otimes H^0(L_{0,b+2})). 
\end{equation*}
Geometrically the quotient map $H^0(L_{2,b})\to W$ gives 
the $\leq1$-th Taylor development of the sections of $L_{2,b}$ along $\Sigma$. 

\begin{lemma}\label{almost free g1}
The group ${\aut}(X_{0,1})$ acts on ${\proj}W={\proj}(V_{b+1,1}\oplus V_{b,2})$ almost freely. 
\end{lemma}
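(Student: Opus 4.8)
The plan is to lift the action to the double cover $\tilde{G} = {\SL}_2 \ltimes {\hom}(V_1, V_1) \rtimes {\GL}_2$ and to prove that, for a general $w \in W$ with nonzero components in both summands $V_{b+1,1}$ and $V_{b,2}$, the only elements of $\tilde{G}$ fixing the line ${\C}w$ are $1$ and the kernel element $(-1, -1)$, which acts trivially on $W$ by Lemma \ref{linearization X01}; this is exactly the asserted almost-freeness. I write $w = (w_1, \bar{w})$ in the ${\SL}_2\times{\GL}_2$-equivariant splitting, so $\bar{w}\in V_{b,2}$ is the image of $w$ under the equivariant quotient $W\to V_{b,2}$ (evaluation along $\Sigma$), on which the unipotent radical $U={\hom}(V_1, V_1)$ acts trivially. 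Recording the action of $U$ through the derivative map $\mu_h\colon V_{b,2}\to V_{b+1,1}$, $h\mapsto\langle\,\cdot\,,{\rm exp}(h)\rangle$ of \eqref{eqn:unipotent radical action X01 general}, we have $h\cdot(w_1, \bar{w}) = (w_1+\mu_h(\bar{w}),\, \bar{w})$.

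First I would pass to $\Sigma$ to control the reductive directions. If $(g_1, h, g_2)$ fixes ${\C}w$ with eigenvalue $\lambda$, then projecting to $V_{b,2}$ gives $(g_1, g_2)\bar{w}=\lambda\bar{w}$, so the image $(\sigma_1, \sigma_2)$ of $(g_1, g_2)$ in ${\PGL}_2\times{\PGL}_2$ stabilizes $[\bar{w}]\in{\proj}V_{b,2}$, i.e.\ preserves the general curve $D_0\in|{\OSigma}(b, 2)|$, a double cover of a ${\proj}^1$ of arithmetic genus $b-1$. The key geometric input is that $\Gamma = {\rm Stab}_{{\PGL}_2\times{\PGL}_2}([\bar{w}])$ is finite, and in fact trivial for even $b\geq 4$: there $D_0$ is hyperelliptic of genus $\geq 3$, so a general one has automorphism group $\langle\iota\rangle\simeq{\Z}/2$ generated by the hyperelliptic involution; since $\iota$ preserves only the ruling of which it is the deck involution while every element of $\Gamma$ preserves both rulings, $\Gamma=1$.

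Granting that $\Gamma$ is finite, I would finish using the first-order datum $w_1$ together with $U$. The torus $tI\in{\GL}_2$ acts on $V_{b,2}$ and on $V_{b+1,1}$ with the distinct weights $2$ and $1$, so no nontrivial $tI$ fixes a line having nonzero components in both summands; and the injectivity of $h\mapsto\mu_h(\bar{w})$ for general $\bar{w}$ (which I would verify by evaluating at one convenient $\bar{w}$, the target having dimension $\geq\dim U=4$) shows that no nontrivial $h$ fixes ${\C}w$. Hence ${\rm Stab}({\C}w)$ has no positive-dimensional part. For a lift $(g_1, g_2)$ of an element of $\Gamma$, the $V_{b+1,1}$-component of $(g_1, h, g_2)w=\lambda w$ reads $(\rho_{b+1,1}(g_1, g_2)-\lambda)w_1 = -\lambda\mu_h(\bar{w})$, whose right-hand side lies in the subspace $M=\mu_\bullet(\bar{w})(U)\subset V_{b+1,1}$ of dimension $\leq 4<\dim V_{b+1,1}$. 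An eigenvalue computation on $V_{b+1,1}=V_{b+1}\boxtimes V_1$ shows that for each nonscalar such $(g_1, g_2)$ the $\lambda$-eigenspace has dimension $<\dim V_{b+1,1}-4$, so ${\rm im}(\rho_{b+1,1}(g_1, g_2)-\lambda)\not\subseteq M$ and a general $w_1$ violates the equation. Therefore $(g_1, g_2)$ must be scalar, forcing $h=0$ by injectivity, and a check of signs (using that $b$ is even) leaves only $\{1, (-1, -1)\}$.

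The main obstacle is the residual finite group $\Gamma$ for small $b$, most acutely $b=2$: then $D_0$ is a general $(2, 2)$-curve, an elliptic curve, and translation by its $2$-torsion extends to ${\PGL}_2\times{\PGL}_2$, so $\Gamma\simeq({\Z}/2)^2\ne 1$ and triviality of the stabilizer is invisible on $\Sigma$ alone. Ruling out these three nontrivial elements is forced only by the normal-derivative component $w_1$, and the crux is the eigenvalue-multiplicity estimate on $V_{3,1}$ that makes the displayed linear condition on $w_1$ proper. Verifying that estimate and the injectivity of $\mu$ are the sole computational points, and each reduces to a single explicit $\bar{w}$.
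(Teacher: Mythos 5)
Your reduction follows the same outline as the paper's own proof: project $\tilde{G}$-equivariantly to the $V_{b,2}$-summand, control the reductive part through the stabilizer $\Gamma$ of the curve $D_0\in|\OSigma(b,2)|$, and kill what remains using the unipotent directions and a general $w_1$. For $b\geq4$ your argument is complete, and in fact supplies a justification (via hyperelliptic curves) of a point the paper merely asserts, namely that $\PGLPGL$ acts almost freely on ${\proj}V_{b,2}$; you are also right --- and here you are sharper than the paper, whose proof silently breaks at this point --- that this assertion fails for $b=2$, since a smooth $(2,2)$-curve is an elliptic curve and translation by its $2$-torsion lies in $\PGLPGL$, giving $\Gamma\simeq(\Z/2)^2$. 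However, your repair of the $b=2$ case has a genuine error at exactly the step you call the crux. The claim that every nonscalar lift $(g_1,g_2)$ of an element of $\Gamma$ has $\lambda$-eigenspace on $V_{3,1}$ of dimension $<\dim V_{3,1}-4=4$ is false. Normalize $\bar{w}=F_2=c_1(X_0^2Y_0^2+X_1^2Y_1^2)+c_3X_0X_1Y_0Y_1+c_4(X_0^2Y_1^2+X_1^2Y_0^2)$, so that $\Gamma$ is generated by the images of $\mathrm{diag}(1,-1)\times\mathrm{diag}(1,-1)$ and the simultaneous swap. Among the lifts of the first generator is $g=(\mathrm{diag}(i,-i),\,\mathrm{diag}(i,-i))\in{\SL}_2\times{\GL}_2$: one checks $gF_2=F_2$, so $\lambda=1$, while the monomial $X_0^aX_1^{3-a}Y_j$ has $g$-eigenvalue $(-1)^{a+j+1}$, so the $\lambda$-eigenspace of $\rho_{3,1}(g)$ has dimension exactly $4$. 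Then $\dim\ker(\rho_{3,1}(g)-\lambda)+\dim M$ can reach $8$, and your dimension count no longer shows that $\{w_1:(\rho_{3,1}(g)-\lambda)w_1\in M\}$ is a proper subspace; a priori this element of ${\aut}(X_{0,1})$ could fix $[w]$ for every $w_1$.

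The lemma is still true, but closing this hole requires more than eigenvalue multiplicities of the lift: one must locate $M=\mu_{\bullet}(F_2)(\hom(V_1,V_1))$ relative to the $\pm1$-eigenspaces $E_{\pm1}$ of $\rho_{3,1}(g)$. Since $gF_2=F_2$, the injective map $h\mapsto\mu_h(F_2)$ intertwines the twisted conjugation $h\mapsto g_1hg_2^{-1}$ with $\rho_{3,1}(g)|_M$, and that conjugation on $\hom(V_1,V_1)$ has eigenvalues $\pm1$ with multiplicity $2$ each; hence $\dim(M\cap E_{-1})=2$, the solution space equals $E_{+1}\oplus(M\cap E_{-1})$ of dimension $6<8$, and a general $w_1$ does escape. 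Repeating this for the other two involutions of $\Gamma$ and all of their lifts (lifts with $\lambda\notin\mathrm{spec}\,\rho_{3,1}(g)$ are already handled by your original count) completes the case $b=2$. In short: right strategy, and a correct identification of the $(\Z/2)^2$ obstruction that the paper itself overlooks, but the pivotal estimate is false as stated and must be replaced by this finer equivariance argument.
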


\begin{proof}
Suppose that for a general point $[F_1, F_2]\in{\proj}(V_{b+1,1}\oplus V_{b,2})$ 
we have an element $(g_1, h, g_2)\in\tilde{G}$ fixing it. 
Consider the projection ${\proj}W\dashrightarrow{\proj}V_{b,2}$ from $V_{b+1,1}$ which is $\tilde{G}$-equivariant. 
Since ${\PGL}_2\times{\PGL}_2$ acts on ${\proj}V_{b,2}$ almost freely, 
we must have $(g_1, g_2)=(\pm1, \lambda)$ for some scalar $\lambda\in{\C}^{\times}$. 
Composing it with $(-1, -1)\in{\SL}_2\times{\GL}_2$, we may assume $g_1=1$. 
Now $(1, h, \lambda)$ maps $[F_1, F_2]$ to $[\lambda^{-1}F_1+\lambda^{-1}\langle F_2, h\rangle, F_2]$, 
where $\langle F_2, \cdot \rangle:{\hom}(V_1, V_1) \to V_{b+1,1}$ is the linear map 
induced by the multiplication and the contraction. 
Thus we have $(\lambda-1)F_1=\langle F_2, h\rangle$. 
Since the map $\langle F_2, \cdot \rangle$ is injective for general $F_2\in V_{b,2}$, 
choosing $F_1$ generically we have $\lambda=1$ and $h=0$. 
\end{proof}

Now we prove 

\begin{proposition}\label{rational g1}
The quotient $\mathbb{G}(1, |L_{2,b}|)/{\aut}(X_{0,1})$ is rational. 
\end{proposition}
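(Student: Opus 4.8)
The plan is to strip the reducible representation $H^0(L_{2,b})$ down to its order-$\leq1$ Taylor piece $W$ along $\Sigma$, and then to reduce $\mathbb{P}W/\aut(X_{0,1})$ to a product-group quotient that the methods of \S\ref{sec:technique for rationality} can handle. First I would exploit the surjection $q\colon H^0(L_{2,b})\to W$ whose kernel $H^0(L_{2,-2})\otimes H^0(L_{0,b+2})\simeq V_{b+2,0}$ has dimension $b+3$. Since $b$ is even, $(-1,-1)$ acts trivially on $H^0(L_{2,b})$, and the central torus of $\GL_2$ acts with distinct weights on the graded pieces $V_{b+2,0},V_{b+1,1},V_{b,2}$; hence no nontrivial element acts by a scalar on $H^0(L_{2,b})$ or on $W$, and the relevant $G_0$ (for both Lemma \ref{Grass no-name} and Lemma \ref{Grass correspo}) is trivial. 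Together with Lemma \ref{almost free g1} and Lemma \ref{Grass almost-free}, this lets me apply the Grassmannian no-name lemma to $q$ (with $a=2$), giving
\[
\mathbb{G}(1,|L_{2,b}|)/\aut(X_{0,1}) \sim G(2,b+5)\times\bigl(G(2,W)/\aut(X_{0,1})\bigr).
\]
As $G(2,b+5)$ is rational, it suffices to treat $G(2,W)/\aut(X_{0,1})$, and Lemma \ref{Grass correspo}(1) trades this (up to factors of projective spaces and Grassmannians) for $\mathbb{P}W/\aut(X_{0,1})$.

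The heart of the matter is this last quotient, where $W$ is the nonsplit extension $0\to V_{b+1,1}\to W\to V_{b,2}\to0$ and $\aut(X_{0,1})$ is non-reductive. Passing to the double cover $\tilde G=(\SL_2\times\GL_2)\ltimes\hom(V_1,V_1)$, I would first remove the unipotent radical $U=\hom(V_1,V_1)\cong\mathbb{C}^4$. By the formula $h\cdot(F_1,F_2)=(F_1+\langle F_2,h\rangle,F_2)$ from the proof of Lemma \ref{almost free g1}, $U$ acts freely by translations in the $V_{b+1,1}$-direction for generic $F_2$, so $\mathbb{P}W/U$ is birational to the total space of a rank-$2b$ vector bundle over $\mathbb{P}V_{b,2}$ carrying a residual $\SL_2\times\GL_2$-action. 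The central $\mathbb{C}^\times\subset\GL_2$ acts trivially on the base but with nonzero weight on the fibers, so projectivizing the bundle absorbs it; the no-name lemma for the resulting $\mathbb{P}^{2b-1}$-bundle over $\mathbb{P}V_{b,2}$, whose base carries the almost free $\PGL_2\times\PGL_2$-action, then yields
\[
\mathbb{P}W/\aut(X_{0,1}) \sim \mathbb{P}^{2b-1}\times\bigl(\mathbb{P}(V_b\boxtimes V_2)/\PGL_2\times\PGL_2\bigr).
\]

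It remains to show $\mathbb{P}(V_b\boxtimes V_2)/\PGL_2\times\PGL_2$ is rational. For $b\geq4$ I would apply the product-group method (Lemma \ref{Grass method for boxtensor}) to $V_2\boxtimes V_b$, sending a homomorphism to its image in $G(3,V_b)$, which splits the quotient as $(\mathbb{P}(V_2^{\oplus3})/\PGL_2)\times(G(3,V_b)/\PGL_2)$. The first factor is rational: slicing the almost transitive $\PGL_2$-action $\mathbb{P}(V_2^{\oplus3})\dashrightarrow\mathbb{P}V_2$ reduces it to a linear quotient of $V_2^{\oplus2}$ by the solvable normalizer of a maximal torus, which is rational by Miyata's theorem \cite{Mi}. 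For the second factor, Lemma \ref{Grass correspo}(1) and Katsylo's rationality of $\mathbb{P}V_b/\PGL_2$ (\cite{Ka1},\cite{Ka2}) give $\mathbb{P}^2\times(G(3,V_b)/\PGL_2)\sim G(2,b)\times(\mathbb{P}V_b/\PGL_2)$, which is rational, so $G(3,V_b)/\PGL_2$ is stably rational of level $\leq2$. Since the first factor is rational of dimension $5>2$, the product is rational, whence $\mathbb{P}W/\aut(X_{0,1})$ is rational. The single small case $b=2$ (genus $13$) leaves the two-dimensional quotient $\mathbb{P}(V_2\boxtimes V_2)/\PGL_2\times\PGL_2$, which I would settle by a direct argument.

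The main obstacle is the passage through the non-reductive group in the second step: one must strip the unipotent radical $U$ and then the central torus from the extension $W$ in the correct order while keeping track of the linearizations—precisely the point at which the parity of $b$, and ultimately the mod $12$ hypothesis, enters—before the reductive product-group machinery and Katsylo's theorem can be brought to bear.
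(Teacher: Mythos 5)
Your first two reductions coincide with the paper's own and are correct: Lemma \ref{Grass no-name} applied to $H^0(L_{2,b})\to W$ (with $G_0$ trivial, as you check) and Lemma \ref{Grass correspo}(1) reduce everything to ${\proj}W/\aut(X_{0,1})$ up to rational factors of compatible dimensions. The gap is in your treatment of ${\proj}W/\aut(X_{0,1})$. Stripping the unipotent radical is fine: over the locus where $\langle F_2,\cdot\rangle$ is injective, ${\proj}W/\hom(V_1,V_1)$ is birationally the total space of the rank-$2b$ quotient bundle $\mathcal{F}=\bigl(\underline{V_{b+1,1}}\otimes\mathcal{L}^{\vee}\bigr)/\underline{\hom(V_1,V_1)}$ over ${\proj}V_{b,2}$, where $\mathcal{L}$ is the tautological line bundle. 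But the next step, that projectivizing absorbs the central torus and the no-name lemma then yields ${\proj}W/\aut(X_{0,1})\sim{\proj}^{2b-1}\times({\proj}(V_b\boxtimes V_2)/\PGLPGL)$, cannot be justified. The no-name lemma in its projective-bundle form requires ${\proj}\mathcal{F}$ to be the projectivization of a vector bundle linearized for the group acting almost freely on the base, namely $\PGLPGL$. Now compute weights: the central $\lambda\in\GL_2$ acts on $V_{b+1,1}$ with weight $1$ and on $V_{b,2}$, hence on $\mathcal{L}$, with weight $2$, so it acts on the fibers of $\mathcal{F}$ with weight $-1$. Twisting by $\mathcal{O}_{{\proj}V_{b,2}}(k)$ shifts this weight by $-2k$, and changing the linearization by a character ${\det}^m$ of $\GL_2$ shifts it by $2m$; since ${\rm Pic}({\proj}V_{b,2})$ is generated by $\mathcal{O}(1)$, every candidate bundle has odd central weight, and none is $\PGLPGL$-linearized (equivalently, the kernel element $(1,-1)\in\SL_2\times\GL_2$ acts by $-1$ on every twist of $\mathcal{F}$). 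Note that this obstruction is independent of the parity of $b$: evenness of $b$ guarantees that $W$ itself is an $\aut(X_{0,1})$-representation, but it does nothing for $\mathcal{F}$. What you actually have is a ${\proj}^{2b-1}$-fibration over ${\proj}V_{b,2}/\PGLPGL$ whose birational triviality is obstructed by a possibly nonzero ($2$-torsion) Brauer-type class; this is exactly the kind of ``technical obstruction'' that the paper's linearization bookkeeping, and ultimately its mod $12$ hypothesis, is designed to avoid, and your proposal does not address it.

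For comparison, the paper never descends to the graded pieces of $W$: since $b$ is even, $W$ is an honest $\aut(X_{0,1})$-representation, and the paper trades ${\proj}W$ for $|L_{2,0}|\times|L_{1,1}|$ by two applications of the no-name method using only the linearized bundles $L_{2,0}$ and $L_{1,1}$ (Lemma \ref{linearization X01}); the quotient $(|L_{2,0}|\times|L_{1,1}|)/\aut(X_{0,1})$ is then proved rational by slice-method arguments on quadrics through the line $l$, ending with Katsylo's theorem for $|{\OQ}(2,2)|/{\C}^{\times}\times{\PGL}_2$. To rescue your route you would have to either prove vanishing of the Brauer class or reinsert auxiliary linearized representations as the paper does. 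Two lesser issues: Lemma \ref{Grass almost-free} does not give almost-freeness of ${\PGL}_2$ on $G(3,V_4)$, so your case $b=4$ needs a separate verification; and Miyata's theorem does not apply to the stabilizer ${\C}^{\times}\rtimes{\Z}/2$ arising in your slice of ${\proj}(V_2^{\oplus3})$, since that group is solvable but disconnected and not triangularizable, so rationality of ${\proj}(V_2^{\oplus3})/{\PGL}_2$ requires a different (classical) argument.
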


\begin{proof}
In the first step, we apply Lemma \ref{Grass no-name} to the quotient map $H^0(L_{2,b})\to W$. 
By Lemma \ref{almost free g1} and Lemma \ref{Grass almost-free}, 
${\aut}(X_{0,1})$ acts on $\mathbb{G}(1, {\proj}W)$ almost freely. 
Then we obtain 
\begin{equation*}
\mathbb{G}(1, |L_{2,b}|)/{\aut}(X_{0,1}) \sim {\C}^{2b+6}\times(\mathbb{G}(1, {\proj}W)/{\aut}(X_{0,1})). 
\end{equation*}
By Lemma \ref{Grass correspo} (1) we have   
\begin{equation*}
{\proj}^1\times(\mathbb{G}(1, {\proj}W)/{\aut}(X_{0,1})) \sim {\proj}^{5b+5}\times({\proj}W/{\aut}(X_{0,1})). 
\end{equation*}
We shall use the no-name method for ${\proj}W\times(|L_{2,0}|\times|L_{1,1}|)$. 
Since both $L_{2,0}$ and $L_{1,1}$ are ${\aut}(X_{0,1})$-linearized and 
since ${\aut}(X_{0,1})$ acts on $|L_{2,0}|\times|L_{1,1}|$ almost freely, we see that 
\begin{eqnarray*}
({\proj}W\times|L_{2,0}|\times|L_{1,1}|)/{\aut}(X_{0,1}) 
& \sim & {\proj}^9\times{\proj}^6\times({\proj}W/{\aut}(X_{0,1})) \\ 
& \sim & {\proj}^{5b+6}\times((|L_{2,0}|\times|L_{1,1}|)/{\aut}(X_{0,1})). 
\end{eqnarray*}
In this way, we are reduced to showing that $(|L_{2,0}|\times|L_{1,1}|)/{\aut}(X_{0,1})$ is stably rational of level $5b+6$. 
Actually, we shall prove that it is rational. 

We identify ${\aut}(X_{0,1})$ with the stabilizer in ${\PGL}_4$ of the line $l$, 
$|L_{2,0}|$ with $|{\Ospace}(2)|$, and $|L_{1,1}|$ with the linear system of quadrics containing $l$. 
This implies that ${\aut}(X_{0,1})$ acts on $|L_{1,1}|$ almost transitively, 
with the stabilizer of a general $Q\in|L_{1,1}|$ isomorphic to $({\C}^{\times}\ltimes{\C})\times{\PGL}_2$ 
(which is the stabilizer of $l$ in ${\aut}(Q)$). 
By the slice method for the projection $|L_{2,0}|\times|L_{1,1}|\to|L_{1,1}|$ we obtain 
\begin{eqnarray*}
(|L_{2,0}|\times|L_{1,1}|)/{\aut}(X_{0,1}) 
& \sim & |L_{2,0}|/({\C}^{\times}\ltimes{\C})\times{\PGL}_2 \\ 
& \sim & {\C}\times (|{\OQ}(2, 2)|/({\C}^{\times}\ltimes{\C})\times{\PGL}_2). 
\end{eqnarray*}
Consider the product $U=|{\OQ}(1, 0)|\times|{\OQ}(2, 2)|$. 
Note that ${\OQ}(1, 0)$ and ${\OQ}(2, 2)$ are both $({\C}^{\times}\ltimes{\C})\times{\PGL}_2$-linearized. 
By the no-name lemma for the second projection $U\to|{\OQ}(2, 2)|$ we have 
\begin{equation*}
U/({\C}^{\times}\ltimes{\C})\times{\PGL}_2 \sim {\proj}^1\times(|{\OQ}(2, 2)|/({\C}^{\times}\ltimes{\C})\times{\PGL}_2). 
\end{equation*}
On the other hand, using the slice method for the first projection $U\to|{\OQ}(1, 0)|$, 
we deduce that 
\begin{equation*}
U/({\C}^{\times}\ltimes{\C})\times{\PGL}_2 \sim |{\OQ}(2, 2)|/{\C}^{\times}\times{\PGL}_2. 
\end{equation*}
The last quotient is rational by Katsylo \cite{Ka2}. 
Thus $(|L_{2,0}|\times|L_{1,1}|)/{\aut}(X_{0,1})$ is rational, and the proof of the proposition is completed. 
\end{proof}


\section{The case of the small resolution of quadric cone}\label{sec:quadric cone}

In this section we study the cases $g\equiv2, 5\; (12)$ in Theorem \ref{main}, 
where the basic ${\proj}^2$-bundle is $X_{1,1}$, a small resolution of a quadric cone. 
We use the notation in \S \ref{ssec:X11} freely.

\subsection{The case $g\equiv2\; (12)$}\label{ssec:g2}

Let $b>0$ be an odd number. 
Let $\mathcal{L}\to|L_{2,b}|$ be the tautological bundle and $\mathcal{E}\to|L_{2,b}|$ be the bundle 
$\underline{H^0(L_{2,b+1})}/\mathcal{L}\otimes H^0(L_{0,1})$ as defined in Proposition \ref{tetra loci}. 
Then ${\proj}\mathcal{E}/{\aut}(X_{1,1})$ is birational to the tetragonal locus of genus $6b+8\equiv2\;(12)$. 
Note that $\mathcal{E}$ is ${\aut}(X_{1,1})$-linearized because $L_{2,b+1}$ is so by Lemma \ref{linearization X11}. 

\begin{lemma}\label{almost-free g2}
The group ${\aut}(X_{1,1})$ acts on $|L_{2,b}|$ almost freely. 
\end{lemma}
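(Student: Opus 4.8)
The plan is to mirror the proofs of Lemmas~\ref{almost-free g6} and \ref{almost-free g10}, deducing almost-freeness from the automorphisms of a general member $S\in|L_{2,b}|$. First I would invoke the description of del Centina and Gimigliano (\cite{dC-G} \S2, with $s=3b+4$ and $n=s-1$): a general $S\in|L_{2,b}|$ is smooth and is the blow-up of $\proj^2$ at $3b+5$ points in general position. This count is consistent with the dimension balance $\dim|L_{2,b}|-\dim\aut(X_{1,1})=(6b+13)-11=6b+2=2(3b+5)-8$, the dimension of the moduli of $3b+5$ points on $\proj^2$ modulo $\PGL_3$, and with the fact that the conic bundle $\pi|_S\colon S\to\proj^1$ has $3b+4$ singular fibres.

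The next step is to pass from $\aut(S)$ to the stabilizer in $\aut(X_{1,1})$. Any $\phi\in\aut(X_{1,1})$ fixing $S$ induces $\phi|_S\in\aut(S)$, and I would show that $\phi|_S=\mathrm{id}$ forces $\phi=\mathrm{id}$: since $\phi$ preserves $\pi$ and $\pi|_S$ is surjective, the induced automorphism of the base $\proj^1$ fixes $\pi(S)=\proj^1$ pointwise and so is trivial; thus $\phi$ preserves each $\pi$-fibre, and on a general fibre $\proj^2$ it fixes the smooth conic $S\cap\proj^2$ pointwise, hence is the identity there, so $\phi=\mathrm{id}$. For $b\geq3$ (the odd values with $3b+5\geq10$) the surface $S$ has no nontrivial automorphism by \cite{Ko}, so $\aut(S)=1$ and the stabilizer is trivial, which gives almost-freeness.

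The main obstacle is the remaining case $b=1$ (genus $g=14$), where $S$ is the blow-up of $\proj^2$ at $8$ general points, i.e.\ a general del Pezzo surface of degree $1$, for which $\aut(S)\simeq\Z/2$ is generated by the Bertini involution $\beta$. Here the point is that $\beta$ cannot come from $\aut(X_{1,1})$, because it does not preserve the conic bundle $\pi|_S$. Indeed, the fibre class $f\in{\rm Pic}(S)$ satisfies $f^2=0$ and $f\cdot K_S=-2$, while $\beta$ fixes $K_S$ and acts as $-1$ on $K_S^{\perp}$; writing $f=-2K_S+f^{\perp}$ with $f^{\perp}\in K_S^{\perp}$ gives $\beta(f)=-4K_S-f$, and $\beta(f)=f$ would force $f=-2K_S$, impossible since then $f^2=4\neq0$. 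As $\phi\in\aut(X_{1,1})$ preserves $\pi|_S$, the induced $\phi|_S$ fixes $f$ and hence cannot be $\beta$; thus $\phi|_S=\mathrm{id}$ and, by the previous paragraph, $\phi=\mathrm{id}$. This settles $b=1$ and completes the proof.

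Finally, I would keep in reserve a purely representation-theoretic alternative, analogous to Lemma~\ref{almost free g1}: decompose $H^0(L_{2,b})=V_{b,0}\oplus V_{b+1,1}\oplus V_{b+2,2}$ as in \eqref{eqn:irr decomp Lab X11} and track the stabilizer of a general $[F_0,F_1,F_2]$ under $\tilde G=(\SL_2\times\GL_2)\ltimes V_{1,1}$. I expect this route to be more delicate for $X_{1,1}$ than for $X_{0,1}$: the balanced summand $V_{b+2,2}$, on which $\PGL_2\times\PGL_2$ would act almost freely, is here a \emph{sub}representation rather than a quotient, so it is not directly accessible by an equivariant projection, and projecting instead onto the quotient $V_{b,0}\simeq V_b$ fails to pin down the first $\SL_2$-factor precisely when $b=1$. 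For this reason I prefer the geometric argument above, with the degree-$1$ del Pezzo case as its only subtle point.
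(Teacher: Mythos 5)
Your proof is correct and takes essentially the same route as the paper: the del Centina--Gimigliano description of a general $S\in|L_{2,b}|$ as a blow-up of $\proj^2$ at $3b+5$ general points, Koitabashi's theorem for the larger odd $b$, and for $b=1$ the observation that the nontrivial involution of $S$ fails to preserve a divisor class coming from $X_{1,1}$ (the paper phrases this as the involution not preserving the line bundle giving the embedding $S\subset X_{1,1}$, and calls it the Geiser involution where your name, Bertini, is the accurate one for $8$ points). Your explicit Picard-lattice computation and the verification that triviality on $S$ forces triviality on $X_{1,1}$ simply fill in details the paper leaves implicit.
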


\begin{proof}
As in Lemma \ref{almost-free g6}, a general member $S$ of $|L_{2,b}|$ is the blow-up of ${\proj}^2$ at $3b+5$ general points 
(put $s=3b+4$, $n=s-1$ in \cite{dC-G} \S 2). 
When $b\geq2$, $S$ has no nontrivial automorphism (\cite{Ko}). 
On the other hand, when $b=1$, $S$ has the Geiser involution 
but this does not preserve the line bundle giving the embedding $S\subset X_{1,1}$.    
\end{proof}

\begin{proposition}\label{rational g2}
The quotient ${\proj}\mathcal{E}/{\aut}(X_{1,1})$ is rational. 
\end{proposition}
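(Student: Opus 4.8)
The plan is to follow the same strategy as in Proposition~\ref{rational g10} and Proposition~\ref{rational g6}: first strip off the projective bundle structure by the no-name lemma, reducing rationality of ${\proj}\mathcal{E}/{\aut}(X_{1,1})$ to stable rationality of $|L_{2,b}|/{\aut}(X_{1,1})$, and then establish the latter by a descending induction on $b$. Since $\mathcal{E}$ is ${\aut}(X_{1,1})$-linearized (because $L_{2,b+1}$ is, by Lemma~\ref{linearization X11}, as $b+1$ is even) and since ${\aut}(X_{1,1})$ acts on $|L_{2,b}|$ almost freely by Lemma~\ref{almost-free g2}, the no-name lemma gives
\begin{equation*}
{\proj}\mathcal{E}/{\aut}(X_{1,1}) \sim {\proj}^{m}\times(|L_{2,b}|/{\aut}(X_{1,1}))
\end{equation*}
for the appropriate fiber dimension $m$ computed from $h^0(L_{2,b+1})$ and $h^0(L_{0,1})$. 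So it suffices to prove that $|L_{2,b}|/{\aut}(X_{1,1})$ is stably rational of level $m$.

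First I would run the same ``doubling'' trick used for $X_{0,1}$: form the product $U=|L_{2,b}|\times|L_{2,1}|$, identify its two projections with projective bundles ${\proj}(\mathcal{L}\otimes H^0(L_{2,1}))$ and ${\proj}(\mathcal{L}'\otimes H^0(L_{2,b}))$, and observe that both $\mathcal{L}\otimes H^0(L_{2,1})$ and $\mathcal{L}'\otimes H^0(L_{2,b})$ are ${\aut}(X_{1,1})$-linearized since $L_{4,b+1}$ is linearized (here $b+1$ even, so Lemma~\ref{linearization X11} applies). Applying the no-name lemma to both projections reduces the problem, up to projective factors, to stable rationality of the base case $|L_{2,1}|/{\aut}(X_{1,1})$, together with a count of the projective levels involved.

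The crux is then to treat the base case $|L_{2,1}|/{\aut}(X_{1,1})$ by hand. I would exploit the concrete geometry of $X_{1,1}$ as a small resolution of the quadric cone $Q\subset{\proj}^4$: using \eqref{eqn:irr decomp Lab X11} and the description of ${\aut}(X_{1,1})$ via its cover $\tilde{G}=({\SL}_2\times{\GL}_2)\ltimes V_{1,1}$, I would identify $|L_{2,1}|$ with an explicit linear system on $Q$ (or $Q_0\simeq{\proj}^1\times{\proj}^1$ after accounting for the filtration by vanishing order along $\sigma$) and then apply the slice method to an appropriate ${\aut}(X_{1,1})$-equivariant fibration, reducing to a linear action of a connected solvable group, where Miyata's theorem \cite{Mi} gives rationality—exactly the pattern used in Proposition~\ref{rational g10}. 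The invariant filtration $F_i=\oplus_{j\geq i}V_{b+j,j}$ and the unipotent radical action should let me peel off the nilpotent part and reduce to a reductive (in fact torus-times-${\PGL}_2$ or $\frak{S}$-type) quotient handled by Katsylo \cite{Ka2} or the $\frak{S}_4$-rationality Proposition~\ref{rational S4}.

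The main obstacle I anticipate is bookkeeping the linearizations and the scalar action of the kernel $(-1,-1)\in{\SL}_2\times{\GL}_2$ of $\tilde{G}\to{\aut}(X_{1,1})$ throughout: I must ensure at each no-name or slice step that the relevant bundle descends to an honest ${\aut}(X_{1,1})$-linearized bundle (i.e.\ that $(-1,-1)$ acts trivially after the appropriate twist by $\det$), and that the group appearing as the stabilizer in the slice method is genuinely connected and solvable so that Miyata applies. The parity constraint ``$b$ odd'' is precisely what makes these linearizations work out, paralleling the role of the mod~$12$ hypothesis, and verifying it consistently will be the delicate part rather than any single geometric idea.
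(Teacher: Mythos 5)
Your two preliminary reductions are valid, and the linearization bookkeeping you flag as the delicate point does work out: since $b$ and $1$ are both odd, the kernel element $(-1,-1)\in{\SL}_2\times{\GL}_2$ acts by $-1$ on both $H^0(L_{2,b})$ and $H^0(L_{2,1})$, so the bundles $\mathcal{L}\otimes H^0(L_{2,1})$ and $\mathcal{L}'\otimes H^0(L_{2,b})$ in your doubling step are honestly ${\aut}(X_{1,1})$-linearized, and Lemma \ref{almost-free g2} (whose $b=1$ case is exactly the Geiser-involution remark) gives the almost-freeness needed on both factors. So you have correctly reduced the Proposition to showing that $|L_{2,1}|/{\aut}(X_{1,1})$ is stably rational of level at most $6b+13$.

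But that base case is precisely where your proposal stops being a proof: ``apply the slice method to an appropriate ${\aut}(X_{1,1})$-equivariant fibration'' names no fibration, and the two endgames you hedge between point in different directions (the $\frak{S}_4$-rationality of Proposition \ref{rational S4} belongs to the ${\proj}^1\times{\proj}^2$ cases of \S \ref{sec:P1P2} and has no role here). The pattern of Proposition \ref{rational g10} does not transport ``exactly'': there a member of $|L_{2,1}|$ is a cubic surface containing the line $l$, and the flag $(p\in l'\subset T_pS)$ supplies a homogeneous base with linear fibers; a member of $|L_{2,1}|$ on $X_{1,1}$ carries no such evident flag. The missing idea, which is what the paper actually uses, is that $X_{1,1}$ has two equivariant \emph{pencils} $|L_{0,1}|$ and $|L_{1,-1}|$ (the two families of planes through the vertex of the cone), whose product is the homogeneous space $|L_{0,1}|\times|L_{1,-1}|\simeq Q_0$. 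The paper forms $U=|L_{2,b}|\times(|L_{0,1}|\times|L_{1,-1}|)$, applies the no-name lemma to the two ${\proj}^1$-bundle projections over $|L_{2,b}|$ (both twists are linearized by Lemma \ref{linearization X11}), and then slices over $Q_0$: the stabilizer of a point of $Q_0$ is $G_1\ltimes V_{1,1}$ with $G_1$ a product of Borel subgroups, hence connected and solvable, acting linearly on the fiber $|L_{2,b}|$, so Miyata \cite{Mi} gives rationality. This proves $|L_{2,b}|/{\aut}(X_{1,1})$ is stably rational of level $2$ \emph{uniformly in odd $b$}, which both fills your gap and shows the doubling step is unnecessary.
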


\begin{proof}
By the no-name lemma we have 
\begin{equation*}
{\proj}\mathcal{E}/{\aut}(X_{1,1}) \sim {\proj}^{6b+17}\times(|L_{2,b}|/{\aut}(X_{1,1})). 
\end{equation*}
To deduce stable rationality of $|L_{2,b}|/{\aut}(X_{1,1})$,  
we consider the product $U=|L_{2,b}|\times(|L_{0,1}|\times|L_{1,-1}|)$. 
We can identify the projection $|L_{2,b}|\times|L_{0,1}|\to|L_{2,b}|$ with 
the projective bundle ${\proj}(\mathcal{L}\otimes H^0(L_{0,1}))$, 
and $|L_{2,b}|\times|L_{1,-1}|\to|L_{2,b}|$ with ${\proj}(\mathcal{L}\otimes H^0(L_{1,-1}))$. 
Note that $\mathcal{L}\otimes H^0(L_{0,1})$ and $\mathcal{L}\otimes H^0(L_{1,-1})$ are 
${\aut}(X_{1,1})$-linearized by Lemma \ref{linearization X11}. 
Then by the no-name lemma we obtain 
\begin{equation*}
U/{\aut}(X_{1,1}) \sim {\proj}^1\times{\proj}^1\times(|L_{2,b}|/{\aut}(X_{1,1})). 
\end{equation*}
On the other hand, we use the slice method for the projection $U\to|L_{0,1}|\times|L_{1,-1}|$. 
Since $|L_{0,1}|\times|L_{1,-1}|$ is identified with the base quadric $Q_0$ of the quadric cone, 
we see that $({\SL}_2\times{\GL}_2)\ltimes V_{1,1}$ acts on it transitively. 
If $G_1$ is the stabilizer in ${\SL}_2\times{\GL}_2$ of a point $p$ of $Q_0$, 
then 
we have 
\begin{equation*}
U/{\aut}(X_{1,1}) \sim |L_{2,b}|/G_1\ltimes V_{1,1}. 
\end{equation*}
Since $G_1$ is connected and solvable, so is $G_1\ltimes V_{1,1}$. 
Thus $|L_{2,b}|/G_1\ltimes V_{1,1}$ is rational by Miyata's theorem \cite{Mi}. 
This shows that $|L_{2,b}|/{\aut}(X_{1,1})$ is stably rational of level $2$, 
and the proposition is proved. 
\end{proof}

\subsection{The case $g\equiv5\; (12)$}\label{ssec:g5}

Let $b>0$ be an even number. 
By Proposition \ref{tetra loci}, the quotient $\mathbb{G}(1, |L_{2,b}|)/{\aut}(X_{1,1})$ is birational to 
the tetragonal locus of genus $6b+5\equiv5\;(12)$. 
Here $L_{2,b}$ is ${\aut}(X_{1,1})$-linearized by Lemma \ref{linearization X11}. 

Let $F$ be the kernel of the restriction map $H^0(L_{2,b})\to H^0({\sheaf}_{\sigma}(b))$. 
Recall that the ${\aut}(X_{1,1})$-representation $H^0(L_{2,b})$ is reducible, 
having the invariant filtration 
\begin{equation}\label{filter X11}
0\subset f^{\ast}H^0({\OQQ}(b+2, 2)) \subset F \subset H^0(L_{2,b})
\end{equation}
defined by the vanishing orders along $\sigma$. 
If we consider $H^0(L_{2,b})$ as a representation of the double cover  
$({\SL}_2\times{\GL}_2)\ltimes V_{1,1}$ of ${\aut}(X_{1,1})$, 
the successive quotients of \eqref{filter X11} are the ${\SL}_2\times{\GL}_2$-representations 
\begin{equation*}
V_{b+2,2}, \quad V_{b+1,1}, \quad V_{b,0}, 
\end{equation*}
and the unipotent radical $V_{1,1}\ni h$ acts by multiplication by ${\exp}(h)$, 
\begin{equation*}
V_{b+1,1} \to V_{b+1,1}\oplus V_{b+2,2}, \qquad V_{b,0}\to V_{b,0}\oplus V_{b+1,1}\oplus V_{b+2,2}. 
\end{equation*}

We consider the quotient representation  
\begin{equation*}
W = H^0(L_{2,b})/ f^{\ast}H^0({\OQQ}(b+2, 2)). 
\end{equation*}
Let $([X_0, X_1], [Y_0, Y_1])$ be bi-homogeneous coordinates of $Q_0\simeq{\proj}^1\times{\proj}^1$.

\begin{lemma}\label{almost free g5}
The following holds. 

$(1)$ When $b\geq4$, ${\aut}(X_{1,1})$ acts on ${\proj}W$ and $\mathbb{G}(1, {\proj}W)$ almost freely. 

$(2)$ When $b=2$, ${\aut}(X_{1,1})$ acts on $\mathbb{G}(1, {\proj}W)$ almost freely. 

$(3)$ When $b=2$, ${\aut}(X_{1,1})$ acts on ${\proj}W={\proj}(V_{3,1}\oplus V_{2,0})$ almost transitively. 
If $v=(X_0^3Y_1+X_1^3Y_0, X_0X_1)\in W$, the stabilizer of $[v]\in{\proj}W$ is the subgroup 
\begin{equation*}
\frak{S}_2\ltimes{\C}^{\times}\subset{\SL}_2\times{\GL}_2/(-1, -1)
\end{equation*}
generated by 
\begin{equation*}
\frak{S}_2 : X_0\mapsto X_1, \quad X_1\mapsto -X_0, \quad Y_0\mapsto Y_1, \quad Y_1\mapsto -Y_0, 
\end{equation*}
\begin{equation}\label{eqn:torus action g=17}
\alpha\in{\C}^{\times} : X_0\mapsto\alpha X_0,\quad X_1\mapsto\alpha^{-1}X_1,\quad Y_0\mapsto\alpha^3Y_0,\quad Y_1\mapsto\alpha^{-3}Y_1.  
\end{equation}
\end{lemma}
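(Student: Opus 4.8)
The plan is to run the argument of Lemma~\ref{almost free g1} for $X_{0,1}$ in the present setting, using the $\aut(X_{1,1})$-equivariant quotient $W\to V_{b,0}$ coming from the filtration~\eqref{filter X11}. Write a point of ${\proj}W$ as $[F_1,F_2]$ with $F_1\in V_{b+1,1}$ and $F_2\in V_{b,0}\simeq V_b$ in the variables $X_0,X_1$, and recall that the unipotent radical $V_{1,1}\ni h$ acts by $(F_1,F_2)\mapsto(F_1+\langle F_2,h\rangle,F_2)$, where $\langle F_2,h\rangle\in V_{b+1,1}$ is the multiplication of $F_2$ by $h$. The induced rational map ${\proj}W\dashrightarrow{\proj}V_{b,0}$ is $\tilde G$-equivariant, and on the target only the first $\SL_2$ acts nontrivially (the unipotent radical and the second $\SL_2$ act trivially, the central torus by a scalar). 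For part $(1)$, take $b\ge4$ and suppose $(g_1,g_2,h)$ fixes a general $[F_1,F_2]$. Projecting to ${\proj}V_{b,0}={\proj}V_b$ and using that $\PGL_2$ acts almost freely on ${\proj}V_b$ for $b\ge3$, we get $g_1=\pm1$, and composing with $(-1,-1)$ we may assume $g_1=1$. The $V_{b,0}$-component of the fixed-point equation forces the projective scalar to be $1$, so $g_2(F_1+\langle F_2,h\rangle)=F_1$. Writing $F_1=P_0Y_0+P_1Y_1$ and expanding, this splits into two identities in $V_{b+1}$ of the shape $(a-1)P_0+a'P_1=L\,F_2$ with $L\in V_1$; since for $b\ge2$ the general $2$-plane $\langle P_0,P_1\rangle$ meets the fixed $2$-plane $\langle X_0F_2,X_1F_2\rangle$ only in $0$, we conclude $g_2=1$ and then $h=0$. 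Thus $\aut(X_{1,1})$ acts almost freely on ${\proj}W$; as $\dim{\proj}W=3b+4$ and $\dim\aut(X_{1,1})=11$, the inequality $1<3b+4-11$ holds for $b\ge4$, so Lemma~\ref{Grass almost-free} yields almost-freeness on $\mathbb{G}(1,{\proj}W)$ as well.

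For part $(3)$, take $b=2$ and $v$ as in the statement, so that the image of $[v]$ in ${\proj}V_{2,0}$ is $[X_0X_1]$. The $\PGL_2$-stabilizer of this point is the normalizer $\C^{\times}\rtimes\frak{S}_2$ of the diagonal torus fixing $\{0,\infty\}$, so $g_1$ ranges over this $1$-dimensional subgroup with two components. Feeding each such $g_1$ into the $V_{b+1,1}$-component equation $g_2(g_1F_1+\langle F_2,h\rangle)=F_1$ and solving the resulting linear conditions forces $h=0$ and determines $g_2$ uniquely: on the identity component the scaling $\operatorname{diag}(\alpha,\alpha^{-1})$ of the $X$-variables must be compensated by $\operatorname{diag}(\alpha^3,\alpha^{-3})$ on the $Y$-variables in order to preserve $F_1=X_0^3Y_1+X_1^3Y_0$, which is exactly the torus~\eqref{eqn:torus action g=17}, while the other component produces the displayed $\frak{S}_2$. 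A direct substitution confirms these elements fix $[v]$. Hence $\dim\mathrm{Stab}([v])=1$, and since $\dim\aut(X_{1,1})-1=10=\dim{\proj}W$ the orbit of $[v]$ is open, giving almost-transitivity with stabilizer $\frak{S}_2\ltimes\C^{\times}$.

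For part $(2)$ I would bootstrap from $(3)$. Let $\mathcal F=\{([w],\Pi): [w]\in\Pi\}$ be the flag variety with its projections to ${\proj}W$ and to $\mathbb{G}(1,{\proj}W)$. Applying the slice method to $\mathcal F\to{\proj}W$ via the open orbit of $(3)$, the $\aut(X_{1,1})$-action near a general fibre is induced from the action of $S=\frak{S}_2\ltimes\C^{\times}$ on ${\proj}(W/\C v)\simeq{\proj}^9$; a weight computation shows the explicit torus acts on $W/\C v$ with at least two distinct weights, so $S$ acts almost freely there and hence $\aut(X_{1,1})$ acts almost freely on $\mathcal F$. Passing along the ${\proj}^1$-bundle $\mathcal F\to\mathbb{G}(1,{\proj}W)$ shows $\dim\mathrm{Stab}(\Pi)\le1$ for general $\Pi$; and a $1$-dimensional stabilizer is ruled out, since its identity component would be a torus fixing a point of the open orbit and stabilizing a general line through it as a sum of two of its eigen-directions, which is impossible because the relevant weights are distinct. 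This gives almost-freeness on $\mathbb{G}(1,{\proj}W)$ for $b=2$.

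The hard part is precisely part $(2)$: almost-freeness on ${\proj}W$ genuinely fails for $b=2$ because the orbit is open, so one cannot simply invoke Lemma~\ref{Grass almost-free}, and one must instead control line-stabilizers in the Grassmannian directly. The two delicate points are the exact determination of the finite part of the stabilizer in $(3)$ (as opposed to merely its identity component and dimension) and the exclusion of positive-dimensional line-stabilizers in the argument just sketched.
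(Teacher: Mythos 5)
Your part (3) is correct and is essentially the paper's own argument (reduce to $g=(1,g_2,h)$ and read off $h=0$, $g_2=1$ from the resulting identity), and your part (1) computation is fine for even $b\geq6$. But part (1) has a genuine gap at $b=4$: the input you use, that $\PGL_2$ acts almost freely on ${\proj}V_b$ for $b\geq3$, is false for $b=4$ (and $b=3$). A general binary quartic has four distinct roots, and the three M\"obius transformations realizing the double transpositions of those roots preserve the cross-ratio and hence fix $[F_2]\in{\proj}V_4$; the generic stabilizer is the Klein four-group, not trivial. So for $b=4$ you cannot conclude $g_1=\pm1$, and your subsequent computation, which assumes $g_1=1$, says nothing about stabilizer elements whose $\SL_2$-part lies over a nontrivial Klein element. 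This is exactly the case the paper isolates: writing $K\subset\SL_2$ for the stabilizer of a general $[F]\in{\proj}V_{b,0}$, it notes that for $b=4$ the group $K/\pm1$ is the Klein four-group, and it disposes of it by showing that $K$ acts effectively on $G(2,U)$, where $U=(\C F)^{\vee}\otimes(V_{b+1}/V_1\cdot F)$, and then using the fibration $U\boxtimes V_1\dashrightarrow G(2,U)$.

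Part (2) also has a gap, which you partly acknowledge. The reduction to $\dim\mathrm{Stab}(\Pi)\leq1$ for a general line $\Pi$ is sound, but your exclusion of a one-dimensional stabilizer assumes that the identity component $H^{\circ}$ is a torus whose fixed points on $\Pi$ lie in the open orbit. Neither is automatic: $H^{\circ}$ could be additive, and in either case its fixed points on $\Pi\simeq{\proj}^1$ may lie in the complement of the open orbit. That complement contains a divisor, for instance the locus where the image in $V_{2,0}$ is a degenerate binary quadric (a $G$-invariant condition, since $G$ acts on $V_{2,0}$ through $\SL_2$), and a general line in ${\proj}^{10}$ meets every divisor. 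At such boundary points the stabilizers have dimension $\geq2$ (boundary orbits have dimension $\leq9$ while $\dim\aut(X_{1,1})=11$), so there genuinely exist one-parameter subgroups fixing points of a general $\Pi$, and you would still need to show that none of them stabilizes $\Pi$; your weight computation only covers fixed points inside the open orbit. The paper's proof of (2) avoids this entirely: it normalizes a general pencil $P\subset W$ to the form $\langle (F_0,X_0^2),\,(F_1,X_1^2)\rangle$ using the two degenerate members (the discriminant locus) of the conic pencil $\pi(P)\subset V_{2,0}$, observes that this basis is consequently preserved up to scalars by any stabilizer of $P$, and kills the stabilizer by a direct calculation. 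A further small point in your slice step: ``at least two distinct weights'' does not by itself give almost-freeness of the torus on ${\proj}(W/\C v)$ (weights $\{2,-2\}$, say, would leave a $\mu_4$ acting trivially); what saves you is the actual weight list $0,\pm1,\pm2,\pm3$, whose pairwise differences have gcd $1$.
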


\begin{proof}
(1) 
In view of Lemma \ref{Grass almost-free}, we prove the assertion only for ${\proj}W={\proj}(V_{b+1,1}\oplus V_{b,0})$. 
Let $K\subset{\SL}_2$ be the stabilizer of a general point $[F]\in{\proj}V_{b,0}$. 
It suffices to show that $(K\times{\GL}_2)\ltimes V_{1,1}$ modulo $(-1, -1)$ acts on $({\C}F)^{\vee}\otimes V_{b+1,1}$ almost freely. 
Here $V_{1,1}$ acts as translation by $F^{\vee}\otimes(V_{1,1}\cdot F)$. 
Consider the quotient map 
\begin{equation}\label{eqn:X11 auxi VB}
({\C}F)^{\vee}\otimes V_{b+1,1}\to ({\C}F)^{\vee}\otimes V_{b+1,1}/(({\C}F)^{\vee}\otimes(V_{1,1}\cdot F)). 
\end{equation}
This is a $K\times{\GL}_2$-linearized vector bundle on which $V_{1,1}$ acts by translations in the fibers (in particular, freely).  
If we set $U=({\C}F)^{\vee}\otimes (V_{b+1}/V_1\cdot F)$, 
the image of \eqref{eqn:X11 auxi VB} is the $K\times{\GL}_2$-representation $U\boxtimes V_1$. 
It is easy to see that $K\times{\GL}_2/(-1, -1)$ acts on $U\boxtimes V_1$ almost freely: 
when $b\geq6$, we have $K=\{\pm1\}$ and ${\GL}_2$ acts on $V_1^{\oplus b}$ almost freely. 
When $b=4$, $K/\pm1$ is the Klein $4$-group which acts on $G(2, U)$ effectively. 
Then our assertion follows by considering the fibration $U\boxtimes V_1\dashrightarrow G(2, U)$ as in \eqref{eqn:map to Grass}. 

(2) 
A general $2$-dimensional linear subspace of $W$ can be normalized by the ${\aut}(X_{1,1})$-action to the following type: 
\begin{equation*}
P = \langle (F_0, X_0^2), \; (F_1, X_1^2) \rangle, \quad F_i\in V_{3,1}. 
\end{equation*}
The basis presented here is canonical, in that 
its image by the projection $\pi: W\to V_{2,0}$ gives the discriminant locus of the conic pencil $\pi(P)$. 
Hence by any stabilizer of $P$ this basis is preserved up to scalar.  
Using this property, our assertion follows from a direct calculation. 

(3) 
We only have to determine the stabilizer. 
Clearly the group $\frak{S}_2\ltimes{\C}^{\times}$ defined above fixes $[v]$. 
Conversely, suppose $g\in{\aut}(X_{1,1})$ fixes $[v]$. 
Composing $g$ with an element of $\frak{S}_2\ltimes{\C}^{\times}$, we may assume that 
$g$ is the projection image of an element of the form $(1, g_2, h)\in({\SL}_2\times{\GL}_2)\ltimes V_{1,1}$. 
Then we would have 
\begin{equation*}
X_0^3(g_2(Y_1)-Y_1) + X_1^3(g_2(Y_0)-Y_0) = -hX_0X_1,
\end{equation*}
from which follow $h=0$ and $g_2=1$. 
\end{proof}

Now we prove 

\begin{proposition}\label{rational g5}
The quotient $\mathbb{G}(1, |L_{2,b}|)/{\aut}(X_{1,1})$ is rational. 
\end{proposition}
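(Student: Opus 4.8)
The plan is to mirror the proof of Proposition \ref{rational g1}, exploiting the invariant filtration of $H^0(L_{2,b})$ and splitting, according to Lemma \ref{almost free g5}, into the cases $b\geq4$ and $b=2$. First I would discard the bottom piece of the filtration \eqref{filter X11}: the kernel of the quotient map $H^0(L_{2,b})\to W$ is $f^{\ast}H^0(\OQQ(b+2,2))\simeq V_{b+2,2}$, of dimension $3b+9$. Since $L_{2,b}$ is $\aut(X_{1,1})$-linearized (Lemma \ref{linearization X11}) and $\aut(X_{1,1})$ acts almost freely on $\mathbb{G}(1,{\proj}W)$ (Lemma \ref{almost free g5}), the Grassmannian no-name Lemma \ref{Grass no-name} applies to this surjection and gives
\begin{equation*}
\mathbb{G}(1,|L_{2,b}|)/\aut(X_{1,1}) \sim \C^{6b+18}\times\big(\mathbb{G}(1,{\proj}W)/\aut(X_{1,1})\big),
\end{equation*}
the factor $\C^{6b+18}$ being the rational Grassmannian $G(2,3b+11)$. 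Thus it remains to treat $\mathbb{G}(1,{\proj}W)/\aut(X_{1,1})$.

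For $b\geq4$ the group $\aut(X_{1,1})$ acts almost freely on ${\proj}W$ (Lemma \ref{almost free g5}(1)), so Lemma \ref{Grass correspo}(1) converts the Grassmannian quotient into ${\proj}W/\aut(X_{1,1})$ up to a product with projective spaces, and the task becomes stable rationality of ${\proj}W/\aut(X_{1,1})$. Here I would use the $\aut(X_{1,1})$-equivariant projection ${\proj}W\dashrightarrow{\proj}V_{b,0}={\proj}V_b$ coming from the filtration $0\to V_{b+1,1}\to W\to V_{b,0}\to0$, on whose base only the first $\SL_2$ acts effectively, through $\PGL_2$. Over this base ${\proj}W$ is realised as an affine bundle whose fibre over a general $[F]\in{\proj}V_b$ is acted on by the stabiliser $(K\times\GL_2)\ltimes V_{1,1}$ that already appears in the proof of Lemma \ref{almost free g5}(1); quotienting out the free translations by the unipotent part $V_{1,1}$ turns this fibre into the representation $U\boxtimes V_1$ with $U=(\C F)^{\vee}\otimes(V_{b+1}/V_1\cdot F)$. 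Since ${\proj}V_b/\PGL_2$ is rational by Katsylo \cite{Ka1}, \cite{Ka2} and $(U\boxtimes V_1)/(K\times\GL_2)$ is rational by the product-group method of \S \ref{ssec:prod grp} (mapping $V_1\boxtimes U\dashrightarrow G(2,U)$ and using that $K$ is finite), the fibration over ${\proj}V_b/\PGL_2$ would yield rationality of ${\proj}W/\aut(X_{1,1})$.

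For $b=2$ the action on ${\proj}W={\proj}(V_{3,1}\oplus V_{2,0})$ is only almost transitive, so instead I would combine the almost-freeness on $\mathbb{G}(1,{\proj}W)$ (Lemma \ref{almost free g5}(2)) with Lemma \ref{Grass correspo}(2) and the explicit stabiliser $\frak{S}_2\ltimes\C^{\times}$ of the point $[v]$ determined in Lemma \ref{almost free g5}(3). This gives
\begin{equation*}
{\proj}^1\times\big(\mathbb{G}(1,{\proj}W)/\aut(X_{1,1})\big)\sim{\proj}(W/\C v)/(\frak{S}_2\ltimes\C^{\times}),
\end{equation*}
a quotient of a $9$-dimensional projective space by the small group $\frak{S}_2\ltimes\C^{\times}$. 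Using the explicit weights \eqref{eqn:torus action g=17}, I would first pass to the rational quotient by the torus $\C^{\times}$ and then dispose of the residual involution from $\frak{S}_2$ by a direct computation.

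The step I expect to be the main obstacle is the fibration argument for $b\geq4$: the base ${\proj}V_b$ does not carry an almost transitive $\aut(X_{1,1})$-action, so the clean slice method is unavailable and one must instead descend the fibrewise rationality over the rational base ${\proj}V_b/\PGL_2$, i.e. establish rationality of the generic fibre quotient over the function field $\C({\proj}V_b/\PGL_2)$ and glue. Correctly handling the affine (translation) action of the unipotent radical $V_{1,1}$ and tracking the $\aut(X_{1,1})$-linearizations through each no-name and slice step is where the care lies; the $b=2$ computation with $\frak{S}_2\ltimes\C^{\times}$ is more elementary but still has to be carried out by hand.
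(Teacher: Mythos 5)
Your first reduction (Lemma \ref{Grass no-name} applied to $H^0(L_{2,b})\to W$, producing the factor ${\C}^{6b+18}$) and the skeleton of your $b=2$ case (Lemma \ref{Grass correspo}(2) together with the stabilizer $\frak{S}_2\ltimes{\C}^{\times}$ from Lemma \ref{almost free g5}(3)) coincide with the paper's proof. The genuine gap is in your case $b\geq4$. There the paper does \emph{not} fibre over ${\proj}V_b/{\PGL}_2$: it only needs stable rationality of ${\proj}W/{\aut}(X_{1,1})$ of level $3b+3$, and it gets it by invoking the duality ${\aut}(X_{1,1})\simeq{\aut}(X_{0,1})$ of Lemma \ref{duality} and re-using the two representations $U_1=H^0(L_{2,0})$, $U_2=H^0(L_{1,1})$ (of dimensions $10$ and $7$) from the proof of Proposition \ref{rational g1}, for which $({\proj}U_1\times{\proj}U_2)/{\aut}(X_{0,1})$ was shown to be rational with almost free action; a double application of the no-name lemma to ${\proj}W\times{\proj}U_1\times{\proj}U_2$ then gives stable rationality of level $15\leq 3b+3$. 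This duality trick is the idea your proposal is missing.

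Your replacement argument fails at exactly the point you flag: rationality of the general (closed) fibre quotients of ${\proj}W/{\aut}(X_{1,1})\dashrightarrow{\proj}V_b/{\PGL}_2$ does not give rationality of the total space; one needs the \emph{generic} fibre to be rational over the function field ${\C}({\proj}V_b/{\PGL}_2)$, i.e.\ birational triviality of the fibration, which is precisely what the no-name machinery with a linearization hypothesis is designed to provide. That hypothesis genuinely fails here: after killing the translations by $V_{1,1}$, the relevant bundle over ${\proj}V_b$ has fibre $({\C}F)^{\vee}\otimes(V_{b+1}/V_1\cdot F)\boxtimes V_1$, on which $-1\in{\SL}_2$ acts by the scalar $-1$ (since $b+1$ is odd), while it acts trivially on ${\proj}V_b$ and on $\Oline(1)$-type twists from the base (as $b$ is even); the rank $2b$ being even, twisting by the determinant does not remove the sign either. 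So there is a concrete $\pm1$ obstruction --- the same kind of obstruction that forces the mod~$12$ hypothesis elsewhere in the paper --- and your ``glue'' step is not a routine verification but the crux, left unresolved. (Your $b=2$ ending is also looser than the paper's: after reducing to ${\proj}(W/{\C}v)/\frak{S}_2\ltimes{\C}^{\times}$ you propose to quotient first by ${\C}^{\times}$ and then handle the involution ``by direct computation''; since $\frak{S}_2\ltimes{\C}^{\times}$ is disconnected, Miyata-type results do not apply, and the paper instead applies the no-name lemma to the projection onto ${\proj}(W_1\oplus W_2)$, reducing to a $2$-dimensional quotient which is rational by Castelnuovo. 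This part of your sketch is fixable, but as written it is incomplete.)
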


\begin{proof}
By Lemma \ref{almost free g5} (1) and (2), we can apply Lemma \ref{Grass no-name} to the quotient homomorphism $H^0(L_{2,b})\to W$. 
Then we obtain 
\begin{equation*}
\mathbb{G}(1, |L_{2,b}|)/{\aut}(X_{1,1}) \sim {\C}^{6b+18}\times(\mathbb{G}(1, {\proj}W)/{\aut}(X_{1,1})). 
\end{equation*}

In case $b\geq4$, we can use Lemma \ref{Grass correspo} (1) to see that 
\begin{equation*}
{\proj}^1\times(\mathbb{G}(1, {\proj}W)/{\aut}(X_{1,1})) \sim {\proj}^{3b+3}\times({\proj}W/{\aut}(X_{1,1})), 
\end{equation*}
so we are reduced to proving stable rationality of ${\proj}W/{\aut}(X_{1,1})$ of level $3b+3$. 
We shall utilize the duality ${\aut}(X_{1,1})\simeq{\aut}(X_{0,1})$ from Lemma \ref{duality}. 
In the proof of Proposition \ref{rational g1}, 
we have found representations $U_1$, $U_2$ of ${\aut}(X_{0,1})$ of dimension $10$, $7$ such that 
${\aut}(X_{0,1})$ acts on ${\proj}U_1\times{\proj}U_2$ almost freely with the quotient rational. 
Replacing ${\aut}(X_{0,1})$ with ${\aut}(X_{1,1})$,  
we can repeat the same no-name argument for ${\proj}W\times({\proj}U_1\times{\proj}U_2)$  
to deduce stable rationality of ${\proj}W/{\aut}(X_{1,1})$ of level $15$. 
This proves our assertion for $b\geq4$. 
   
Next we consider the case $b=2$. 
Let $v\in W$ be the vector as defined in Lemma \ref{almost free g5} (3). 
By Lemma \ref{almost free g5} (2), (3) and Lemma \ref{Grass correspo} (2), we have 
\begin{equation*}
{\proj}^1\times(\mathbb{G}(1, {\proj}W)/{\aut}(X_{1,1})) \sim {\proj}(W/{\C}v)/\frak{S}_2\ltimes{\C}^{\times}
\end{equation*}
where $\frak{S}_2\ltimes{\C}^{\times}$ is as defined in Lemma \ref{almost free g5} (3). 
It is easy to see the following $\frak{S}_2\ltimes{\C}^{\times}$-decomposition of $W$: 
\begin{equation*}
V_{2,0} = \langle X_0X_1\rangle \oplus \langle X_0^2,  X_1^2 \rangle,  
\end{equation*}
\begin{eqnarray*}
V_{3,1} &=& \langle X_0^3Y_1+X_1^3Y_0\rangle \oplus \langle X_0^3Y_1-X_1^3Y_0 \rangle \oplus  \\ 
             & & \langle X_0^3Y_0, \; X_1^3Y_1\rangle \oplus \langle X_0^2X_1Y_0, \; -X_0X_1^2Y_1 \rangle \oplus 
              \langle X_0X_1^2Y_0, \; X_0^2X_1Y_1\rangle. 
\end{eqnarray*}
Let $W_i$ be the representation of $\frak{S}_2\ltimes{\C}^{\times}$ induced from 
the weight $i$ scalar representation of ${\C}^{\times}$, 
$V_{\sigma}$ the sign representation of $\frak{S}_2$ pulled back to $\frak{S}_2\ltimes{\C}^{\times}$, 
and $V_0$ the trivial representation. 
By the above calculation we have the decomposition 
\begin{equation*}
W/{\C}v \simeq V_0 \oplus V_{\sigma} \oplus W_1^{\oplus2} \oplus W_2 \oplus W_3. 
\end{equation*}
Here notice that our ${\C}^{\times}$ is defined rather as the quotient by $-1$ of 
the $\alpha$-torus ${\C}^{\times}$ in \eqref{eqn:torus action g=17}, 
and this division by $-1$ reduces the weights of ${\C}^{\times}$-representations to half.  
Now $\frak{S}_2\ltimes{\C}^{\times}$ acts on ${\proj}(W_1\oplus W_2)$ almost freely 
so that we can apply the no-name lemma to the projection ${\proj}(W/{\C}v)\dashrightarrow{\proj}(W_1\oplus W_2)$. 
This gives  
\begin{equation*}
{\proj}(W/{\C}v)/\frak{S}_2\ltimes{\C}^{\times} \sim {\C}^6\times({\proj}(W_1\oplus W_2)/\frak{S}_2\ltimes{\C}^{\times}). 
\end{equation*}
Since ${\proj}(W_1\oplus W_2)/\frak{S}_2\ltimes{\C}^{\times}$ has dimension $2$, it is rational. 
Therefore $\mathbb{G}(1, {\proj}W)/{\aut}(X_{1,1})$ is stably rational of level $1$, 
and Proposition \ref{rational g5} is proved for $b=2$. 
\end{proof}


\end{document}